\numberwithin{equation}{section}
\setlist[description]{leftmargin=\parindent,labelindent=\parindent}
\definecolor{mygray}{gray}{0.80}
\definecolor{myblue}{rgb}{0.50,0.50,0.90}
\definecolor{myred}{rgb}{0.95,0.70,0.30}
\let\OLDthebibliography\thebibliography
\renewcommand{\thebibliography}[1]{%
  \OLDthebibliography{#1}%
  \setlength{\parskip}{0pt}%
  \setlength{\itemsep}{0.5pt}%
}
\newtheorem{theorem}{Theorem}[section]
\newtheorem{theorem*}{Theorem}
\newtheorem{corollary}[theorem]{Corollary}
\newtheorem{corollary*}[theorem*]{Corollary}
\newtheorem{lemma}[theorem]{Lemma}
\newtheorem{proposition}[theorem]{Proposition}
\theoremstyle{definition}
\newtheorem{definition}[theorem]{Definition}
\newtheorem*{question*}{Question}
\newtheorem*{problem*}{Problem}
\newtheorem*{conjecture*}{Conjecture}
\newtheorem{example}[theorem]{Example}
\newtheorem*{notation*}{Notation}
\newtheorem*{claim*}{Claim}
\newtheorem{definition-proposition}[theorem]{Definition-Proposition}
\DeclareMathOperator{\moduleCategory}{\mathsf{mod}}
\renewcommand{\mod}{\moduleCategory}
\DeclareMathOperator{\proj}{\mathsf{proj}}
\DeclareMathOperator{\Hom}{\mathrm{Hom}}
\DeclareMathOperator{\thick}{\mathsf{thick}}
\DeclareMathOperator{\add}{\mathsf{add}}
\DeclareMathOperator{\Fac}{\mathsf{Fac}}
\newcommand{\twosilt}{\mathsf{2\mbox{-}silt}\hspace{.02in}}
\newcommand{\tautilt}{\mbox{\sf $\tau$-tilt}\hspace{.02in}}
\newcommand{\stautilt}{\mbox{\sf s$\tau$-tilt}\hspace{.02in}}
\newcommand{\twosiltep}{\mathsf{2\mbox{-}silt}_{\epsilon}\hspace{.02in}}
\newcommand{\twosiltno}{\mathsf{2\mbox{-}silt}}
\newcommand{\Db}{\mathsf{D^b}}
\newcommand{\Kb}{\mathsf{K^b}}
\newcommand{\RHom}{\mathbf{R}\strut\kern-.2em\operatorname{Hom}\nolimits}
\begin{document}

\setlength{\baselineskip}{17pt}

\title{\textbf{A complete classification of $\tau$-tilting finite Schur algebras}}

\author{Toshitaka Aoki and Qi Wang}

\keywords{
  $\tau$-tilting modules,
  $\tau$-tilting finiteness,
  blocks,
  Schur algebras.
}

\abstract{\indent
In this paper, we determine the $\tau$-tilting finiteness for some blocks of classical Schur algebras. Combining with the results in \cite{W-schur}, we get a complete classification of $\tau$-tilting finite Schur algebras. We also give a complete classification of $\tau$-tilting finite blocks of the Schur algebra $S(2,r)$. As an application, we obtain a classification of strictly wild Schur algebras, except for three remaining open cases.
}

\maketitle

\begin{center}
  {\em Mathematics Subject Classification:} 16G10, 16G60, 20G05.
\end{center}

\section{Introduction}

Let $n,r$ be positive integers and $\mathbb{F}$ an algebraically closed field of characteristic $p>0$.
We take an $n$-dimensional vector space $V$ over $\mathbb{F}$ with a basis
$\{v_1,v_2,\ldots,v_n\}$.
We denote by $V^{\otimes r}$ the $r$-fold tensor product
$V\otimes_\mathbb{F}V\otimes_\mathbb{F}\cdots\otimes_\mathbb{F}V$.
Then, $V^{\otimes r}$ has an $\mathbb{F}$-basis given by
\[
\left\{
v_{i_1}\otimes v_{i_2}\otimes \cdots\otimes v_{i_r}
\mid
1\leqslant i_j\leqslant n \ \text{for all}\ 1\leqslant j\leqslant r
\right\}.
\]
Let $G_r$ be the symmetric group on $r$ symbols and $\mathbb{F}G_r$ its group algebra.
Then, $G_r$, and hence also $\mathbb{F}G_r$, act on the right on $V^{\otimes r}$ by placing permutations of the subscripts, that is, for any $\sigma \in G_r$,
\[
(v_{i_1}\otimes v_{i_2}\otimes \cdots\otimes v_{i_r})\cdot \sigma
=
v_{i_{\sigma(1)}}\otimes v_{i_{\sigma(2)}}\otimes \cdots\otimes v_{i_{\sigma(r)}}.
\]
We call the endomorphism ring
$\mathrm{End}_{\mathbb{F}G_r}(V^{\otimes r})$
the \emph{Schur algebra} (see \cite[Section 2]{Martin-schur alg}) and denote it by
$S_\mathbb{F}(n,r)$, or simply by $S(n,r)$.

One of the important roles of $S(n,r)$ is that the module category of $S(n,r)$ is equivalent to the category of $r$-homogeneous polynomial representations of the general linear group $\mathrm{GL}_n(\mathbb{F})$.
Based on this well-known fact, $q$-Schur algebras, infinitesimal Schur algebras, Borel-Schur algebras, etc., appear as derivatives.
In addition, $S(n,r)$ is known to be a quasi-hereditary algebra that is closely related to the highest weight category and plays a significant role in Lie theory.
Consequently, the representation theory of $S(n,r)$ has been well-studied in the past few decades.
We refer to \cite{Donkin-Schur-IV}, \cite{DEMN-tame schur}, \cite{DN-semisimple}, \cite{EH-two-blocks}, \cite{EH-method}, \cite{EL-Torsion-pair-schur}, \cite{Martin-schur alg}, etc., for more properties of Schur algebras.

In the representation theory of finite-dimensional algebras, the notion of progenerators is crucial because it characterizes the equivalence between the module categories of two algebras, in which case these two algebras are called Morita equivalent.
In the 1970s, a generalization called tilting modules, together with tilting complexes, was discovered during the development of Morita theory for derived categories and has been extensively studied by many mathematicians, such as \cite{BB-tilting-module}, \cite{HU-tilting}, \cite{R-derived}, \cite{RS-tilting-complex}, etc.
In recent years, the $\tau$-tilting theory introduced by Adachi-Iyama-Reiten \cite{AIR} has drawn more and more attention, where $\tau$ denotes the Auslander-Reiten translation.
A central notion of $\tau$-tilting theory is the class of \emph{support $\tau$-tilting modules}, which is a completion to the class of tilting modules from the viewpoint of mutation.
Moreover, support $\tau$-tilting modules are in bijection with several important objects in representation theory, such as two-term silting complexes, functorially finite torsion classes, and left finite semibricks.
One may look at \cite{AI-silting}, \cite{Asai}, \cite{AHMW-joint}, \cite{BST-max-green-seq}, \cite{DIRRT} and \cite{EJR18} for more materials.

Similar to the representation-finiteness of algebras, a modern analog called \emph{$\tau$-tilting finiteness} was introduced by Demonet-Iyama-Jasso \cite{DIJ-tau-tilting-finite}.
A finite-dimensional algebra $A$ is called \emph{$\tau$-tilting finite} if it has only finitely many isomorphism classes of basic support $\tau$-tilting modules.
In the $\tau$-tilting finite case, the set of support $\tau$-tilting modules has a nice behavior, for example, the set admits bijections to the set of all torsion classes \cite{DIJ-tau-tilting-finite} and the set of all semibricks \cite{Asai}.
So far, the $\tau$-tilting finiteness for many algebras is known by several papers, such as \cite{Ada-rad-square-0}, \cite{AAC-brauer grapha}, \cite{Mizuno}, \cite{MS-cycle-finite}, \cite{Mousavand-biserial alg}, \cite{P-gentle}, \cite{W-two-point}, \cite{W-simply}, \cite{Z-tilted}, etc.

Since the representation type of $S(n,r)$ over an algebraically closed field $\mathbb{F}$ of characteristic $p>0$ is completely determined, we may consider the $\tau$-tilting finiteness of $S(n,r)$ as a new property.
Another motivation can be found in \cite{AS-Schurian-finiteness}, where the authors are working on the $\tau$-tilting finiteness for block algebras of type A Hecke algebras.
In fact, a complete classification of $\tau$-tilting finite Schur algebras could provide useful material for such a classification of Hecke algebras.

We mention that the $\tau$-tilting finiteness of most Schur algebras has been determined in \cite{W-schur}, and the only remaining cases are displayed in $(\star)$ below.
Hence our first aim in this paper is to solve these open cases.
\[
(\star)\quad
\left\{
\begin{aligned}
p&=2,       & n&=2,          & r&=8,17,19; \\
p&=2,       & n&=3,          & r&=4;\\
p&=2,       & n&\geqslant 5, & r&=5; \\
p&\geqslant 5, & n&=2,       & p^2&\leqslant r\leqslant p^2+p-1.
\end{aligned}
\right.
\]

\begin{theorem*}[Theorem \ref{thm:classification Schur}]
\label{thm:tau-tilting finiteness}
Let $S(n,r)$ be the Schur algebra over $\mathbb{F}$.
\begin{description}[itemsep=-3pt]
  \item[(1)] If $p=2$, then $S(2,8)$, $S(2,17)$ and $S(2,19)$ are $\tau$-tilting finite.
  \item[(2)] If $p=2$, then $S(3,4)$ is $\tau$-tilting finite.
  \item[(3)] If $p=2$, then $S(n,5)$ is $\tau$-tilting infinite for any $n\geqslant 5$.
  \item[(4)] If $p\geqslant 5$, then $S(2,r)$ is $\tau$-tilting finite for any $p^2\leqslant r\leqslant p^2+p-1$.
\end{description}
\end{theorem*}

According to the work in \cite{W-schur} and the above results, we have obtained a complete classification of $\tau$-tilting finite Schur algebras.
In Appendix \ref{appendix}, we provide a complete list of $\tau$-tilting finite Schur algebras.
Also, the number of isomorphism classes of basic support $\tau$-tilting modules for $\tau$-tilting finite $S(n,r)$'s over $p=2,3$ is completely determined.

We obtain the explicit relation between representation-finiteness and $\tau$-tilting finiteness for $S(n,r)$ in Appendix \ref{appendix}.
In particular, we have
\begin{corollary*}
Suppose $p=2$, $n\geqslant 3$ or $p=3$ or $p\geqslant 5$, $n\geqslant 2$.
Then, $S(n,r)$ is $\tau$-tilting finite if and only if it is representation-finite.
\end{corollary*}

In order to understand a $\tau$-tilting finite Schur algebra $S(n,r)$, it is enough to find the basic algebra $\overline{S(n,r)}$ of $S(n,r)$.
We point out that finding the quiver and relations of $\overline{S(n,r)}$ for a $\tau$-tilting finite $S(n,r)$ has been completed in several previous works, including \cite{Erdmann-finite}, \cite{DEMN-tame schur} and \cite{W-schur}.
In the proof of Theorem \ref{thm:tau-tilting finiteness}, one finds that, for example, if $p=2$,
\[
\overline{S(2,8)}\simeq \mathcal{L}_5,\quad
\overline{S(2,17)}\simeq \mathcal{L}_5\oplus \mathcal{A}_2 \oplus \mathbb{F}\oplus \mathbb{F},\quad
\overline{S(2,19)}\simeq \mathcal{L}_5\oplus \mathcal{D}_3 \oplus \mathbb{F} \oplus \mathbb{F},
\]
where $\mathcal{A}_m$, $\mathcal{D}_m$ and $\mathcal{L}_5$ are defined in Subsection \ref{subsection-all algebras}.
We notice that the $\tau$-tilting finiteness of $S(n,r)$ is always reduced to that of blocks of $S(n,r)$, which is the main strategy to prove Theorem \ref{thm:tau-tilting finiteness}.
However, the $\tau$-tilting infiniteness of $S(n,r)$ does not imply the $\tau$-tilting infiniteness of the blocks of $S(n,r)$.
In other words, it is possible that a $\tau$-tilting infinite $S(n,r)$ has a $\tau$-tilting finite block.
This motivates us to try to give a classification of $\tau$-tilting finite blocks of Schur algebras.

\begin{problem*}
\label{problem}
Give a complete classification of $\tau$-tilting finite blocks of Schur algebras.
\end{problem*}

The second aim of this paper is to give a partial answer to the above problem.
Namely, we determine all $\tau$-tilting finite blocks of $S(2,r)$ as follows.

\begin{theorem*}[Theorem \ref{thm:S_2r} and Section \ref{subsection-all algebras}]
\label{thm:1.3}
Let $\mathcal{B}$ be a block of $S(2,r)$.
\begin{description}[itemsep=-3pt]
  \item[(1)] If $p=2$, then $\mathcal{B}$ is $\tau$-tilting finite if and only if $\mathcal{B}$ is Morita equivalent to one of $\mathbb{F}$, $\mathcal{A}_2$, $\mathcal{D}_3$, $\mathcal{K}_4$ and $\mathcal{L}_5$.
  \item[(2)] If $p\geqslant 3$, then $\mathcal{B}$ is $\tau$-tilting finite if and only if $\mathcal{B}$ is Morita equivalent to one of $\mathbb{F}$, $\mathcal{A}_m$ $(2\leqslant m \leqslant p)$ and $\mathcal{D}_{p+1}$.
\end{description}
\end{theorem*}

In order to prove Theorem \ref{thm:1.3}, a crucial statement we use, see \cite{EH-two-blocks}, is that if two blocks, $\mathcal{B}$ of $S(2,r)$ and $\mathcal{B}'$ of $S(2,r')$, have the same number of simple modules over the same field, then $\mathcal{B}$ and $\mathcal{B}'$ are Morita equivalent.
However, such a phenomenon does not appear in the case of $S(n,r)$ with $n \geqslant 3$.
One may easily find a counterexample in Appendix \ref{appendix}, for example, comparing $S(3,4)$ and $S(3,5)$ over $p=2$.

A study on the $\tau$-tilting finiteness of $q$-Schur algebras $S_q(n,r)$ is in progress and will be settled in another article.
The conclusions in this paper have laid the foundation for the study of $q$-Schur algebras, at least in the case of $S_q(2,r)$.
More precisely, it is shown in \cite[Lemma 3.2]{EN-q-Schur} that each $S_q(2,r)$ is Morita equivalent to a direct sum of some $S(2,r')$'s over the same field; once we know the precise summands of the direct sum, it is obvious to find the $\tau$-tilting finiteness of $S_q(2,r)$.

This paper is organized as follows.
In Section 2, we first review some basic concepts of $\tau$-tilting theory and silting theory.
Then, we introduce the sign decomposition, which is the main tool we use in this paper to show the $\tau$-tilting finiteness of $\mathcal{D}_m$.
Last, we recall the definitions and properties of Schur algebras.
In Section 3, we present our main results and proofs.
In Section 4, we provide a classification of strictly wild Schur algebras.

\section{Preliminaries}

We recall that any finite-dimensional basic algebra $A$ over an algebraically closed field $\mathbb{F}$ is isomorphic to a bound quiver algebra $\mathbb{F}Q/I$, where $\mathbb{F}Q$ is the path algebra of a finite quiver $Q$ and $I$ is an admissible ideal of $\mathbb{F}Q$.
We call $Q$ the quiver of $A$.
We refer to \cite{ASS} for more background on quiver representation theory and the representation theory of finite-dimensional algebras.

\subsection{$\tau$-tilting theory}
\label{subsection-tau-tilting-theory}

Let $\moduleCategory A$ be the category of finitely generated right $A$-modules and $\proj A$ the category of finitely generated projective right $A$-modules.
For any $M\in \moduleCategory A$, we denote by $\add(M)$, respectively $\Fac(M)$, the full subcategory of $\moduleCategory A$ whose objects are direct summands, respectively factor modules, of finite direct sums of copies of $M$.
We denote by $A^{\mathrm{op}}$ the opposite algebra of $A$ and by $\lvert M\rvert$ the number of isomorphism classes of indecomposable direct summands of $M$.

We recall that the Nakayama functor $\nu:=D(-)^\ast$ is induced by the dualities
\[
D=\Hom_\mathbb{F}(-,\mathbb{F})\colon \moduleCategory A \leftrightarrow \moduleCategory A^{\mathrm{op}}
\quad\text{and}\quad
(-)^\ast=\Hom_A(-,A)\colon \proj A \leftrightarrow \proj A^{\mathrm{op}}.
\]
Then, for any $M\in \moduleCategory A$ with a minimal projective presentation
\[
P_1 \overset{f_1}{\longrightarrow}
P_0 \overset{f_0}{\longrightarrow}
M \longrightarrow 0,
\]
the Auslander-Reiten translation $\tau M$ is defined by the exact sequence
\[
0 \longrightarrow \tau M
\longrightarrow \nu P_1
\overset{\nu f_1}{\longrightarrow}
\nu P_0.
\]

\begin{definition}[{\cite[Definition 0.1]{AIR}}]
\label{def-tau-tilting}
Let $M\in \moduleCategory A$.
\begin{description}[itemsep=-3pt]
  \item[(1)] $M$ is called \emph{$\tau$-rigid} if $\Hom_A(M,\tau M)=0$.
  \item[(2)] $M$ is called \emph{$\tau$-tilting} if $M$ is $\tau$-rigid and $\lvert M\rvert=\lvert A\rvert$.
  \item[(3)] $M$ is called \emph{support $\tau$-tilting} if $M$ is a $\tau$-tilting $(A/\langle e\rangle)$-module with respect to an idempotent $e$ of $A$.
\end{description}
\end{definition}

A pair $(M,P)$ with $M\in \moduleCategory A$ and $P\in \proj A$ is called a \emph{support $\tau$-tilting pair} if $M$ is $\tau$-rigid, $\Hom_A(P,M)=0$ and $\lvert M\rvert+\lvert P\rvert=\lvert A\rvert$.
Obviously, $(M,P)$ is a support $\tau$-tilting pair if and only if $M$ is a $\tau$-tilting $(A/\langle e\rangle)$-module and $\add(P)=\add(eA)$.

We denote by $\mathsf{\tau\text{-}rigid}\, A$ the set of isomorphism classes of $\tau$-rigid $A$-modules, and by $\stautilt A$, respectively $\tautilt A$, the set of isomorphism classes of basic support $\tau$-tilting, respectively $\tau$-tilting, modules.
It is known from \cite{AIR} that
\[
\tautilt A \subseteq \stautilt A \subseteq \mathsf{\tau\text{-}rigid}\, A.
\]

\begin{definition}
\label{def-tau-tilting-finite}
A finite-dimensional algebra $A$ is called \emph{$\tau$-tilting finite} if $\tautilt A$ is a finite set.
Otherwise, $A$ is called \emph{$\tau$-tilting infinite}.
\end{definition}

\begin{proposition}[{\cite[Corollary 2.9]{DIJ-tau-tilting-finite}}]
\label{tau-tilting-finite-rigid}
A finite-dimensional algebra $A$ is $\tau$-tilting finite if and only if one of, equivalently any of, the sets $\mathsf{\tau\text{-}rigid}\, A$ and $\stautilt A$ is a finite set.
\end{proposition}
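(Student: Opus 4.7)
The plan is to close a triangle of implications among the finiteness of $\tautilt A$, $\stautilt A$, and $\mathsf{\tau\text{-}rigid}\, A$. Two of the three implications are formal, relying on the inclusions $\tautilt A \subseteq \stautilt A \subseteq \mathsf{\tau\text{-}rigid}\, A$ mentioned in the excerpt. The first inclusion is by definition, and the second follows from the standard observation (essentially \cite{AIR}) that if $M$ is a $\tau$-tilting $A/\langle e\rangle$-module then $M$ is $\tau$-rigid when regarded as an $A$-module. This can be verified by comparing a minimal projective presentation over $A/\langle e\rangle$ with one over $A$ and checking that $\tau_A M$ is annihilated by $e$, so that $\Hom_A(M,\tau_A M)$ agrees with $\Hom_{A/\langle e\rangle}(M,\tau_{A/\langle e\rangle} M)=0$. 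These inclusions yield at once $\mathsf{\tau\text{-}rigid}\, A$ finite $\Rightarrow \stautilt A$ finite $\Rightarrow \tautilt A$ finite.

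The substantive implication is $\tautilt A$ finite $\Rightarrow \mathsf{\tau\text{-}rigid}\, A$ finite. The key input is the Bongartz-type completion established by Adachi--Iyama--Reiten in \cite{AIR}: every $\tau$-rigid $A$-module is isomorphic to a direct summand of a basic $\tau$-tilting $A$-module. Granting this, every indecomposable $\tau$-rigid $A$-module must appear among the (at most $|A|$) pairwise non-isomorphic indecomposable summands of some element of $\tautilt A$. Therefore, if $\tautilt A$ is finite, the set of isomorphism classes of indecomposable $\tau$-rigid $A$-modules is bounded above by $|A|\cdot|\tautilt A|$ and hence finite.

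To close the loop, recall the standard bound that any basic $\tau$-rigid $A$-module decomposes as a direct sum of at most $|A|$ pairwise non-isomorphic indecomposable $\tau$-rigid summands; this bound is inherent in $\tau$-tilting theory and is essentially the inequality part of $|M|\leqslant|A|$ from Definition \ref{def-tau-tilting}. Consequently, finitely many indecomposable $\tau$-rigid modules assemble into only finitely many basic $\tau$-rigid modules, so $\mathsf{\tau\text{-}rigid}\, A$ is finite and the triangle is closed. The main obstacle in this plan is the Bongartz completion step: that is the only place where genuine structural content of $\tau$-tilting theory is needed, whereas everything else reduces to elementary counting once completion is granted.
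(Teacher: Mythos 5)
The paper offers no proof of this statement: it is imported verbatim as \cite[Corollary 2.9]{DIJ-tau-tilting-finite}, so there is no internal argument to compare against. Your proposed argument is the standard one and is essentially correct: the two easy implications come from the inclusions $\tautilt A\subseteq \stautilt A\subseteq \mathsf{\tau\text{-}rigid}\,A$ (which the paper itself records, citing \cite{AIR}), and the substantive implication ``$\tautilt A$ finite $\Rightarrow$ $\mathsf{\tau\text{-}rigid}\,A$ finite'' correctly hinges on the Bongartz-type completion of \cite{AIR}, after which the counting is routine (indeed, once you know there are finitely many indecomposable $\tau$-rigid modules, finiteness of the set of basic $\tau$-rigid modules follows without even invoking the bound $|M|\leqslant|A|$, which in any case is a theorem of \cite{AIR} rather than part of Definition \ref{def-tau-tilting}). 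Implicitly you read $\mathsf{\tau\text{-}rigid}\,A$ as the set of \emph{basic} (equivalently, indecomposable) $\tau$-rigid modules, which is the intended reading.

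One caveat: your justification of the inclusion $\stautilt A\subseteq \mathsf{\tau\text{-}rigid}\,A$ is not correct as stated. For $M\in\mod A/\langle e\rangle$ the module $\tau_A M$ is in general \emph{not} annihilated by $e$: for $A=\mathbb{F}(1\to 2)$, $e=e_2$ and $M=S_1$ one has $\tau_A S_1\simeq S_2$, so $\Hom_A(M,\tau_A M)$ does not literally ``agree with'' $\Hom_{A/\langle e\rangle}(M,\tau_{A/\langle e\rangle}M)$. What is true, and what you should cite or prove instead, is the compatibility result of \cite{AIR}: for $M\in\mod A/\langle e\rangle$, $M$ is $\tau$-rigid over $A/\langle e\rangle$ if and only if it is $\tau$-rigid over $A$. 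A clean way to see it is via the Auslander--Smal{\o} criterion, $\Hom_A(M,\tau_A M)=0$ if and only if every short exact sequence $0\to M'\to E\to M\to 0$ with $M'\in\Fac M$ splits; since all composition factors of such an $E$ avoid the vertices in $e$, the middle term is automatically an $A/\langle e\rangle$-module, so the Ext-vanishing conditions over $A$ and over $A/\langle e\rangle$ coincide. With that repair (or simply by quoting the inclusion chain already stated in the paper), your proof is complete.
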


It is shown in \cite[Section 2.2]{AIR} that every $\tau$-rigid $A$-module $M$ provides a torsion class $\Fac M$ in $\moduleCategory A$.
If moreover $A$ is $\tau$-tilting finite, then all torsion classes in $\moduleCategory A$ can be obtained in this way; see \cite[Theorem 3.8]{DIJ-tau-tilting-finite}.
According to the inclusions between torsion classes, we may define a partial order on $\stautilt A$, i.e.,
\[
M\leqslant N
\quad \Longleftrightarrow \quad
\Fac M \subseteq \Fac N
\]
for $M,N\in \stautilt A$.
We denote by $\mathcal{H}(\stautilt A)$ the Hasse quiver of $\stautilt A$ with respect to this partial order $\leqslant$.
From the viewpoint of mutation, it is known that arrows of $\mathcal{H}(\stautilt A)$ are explicitly described by left mutations of support $\tau$-tilting modules; see \cite[Section 2.4]{AIR} for details.
The following statement implies that $A$ is $\tau$-tilting finite if we can find a finite connected component in $\mathcal{H}(\stautilt A)$.

\begin{proposition}[{\cite[Corollary 2.38]{AIR}}]
\label{finite-connected-component}
If the Hasse quiver $\mathcal{H}(\stautilt A)$ contains a finite connected component $\Delta$, then $\mathcal{H}(\stautilt A)=\Delta$.
\end{proposition}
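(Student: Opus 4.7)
The plan is to show that any finite connected component $\Delta$ of $\mathcal{H}(\stautilt A)$ must coincide with the entire Hasse quiver, by exploiting two structural properties of $\stautilt A$ that I would extract from \cite{AIR}. First, $(\stautilt A, \leq)$ has $A$ as a unique maximum (since $\Fac A = \mod A$) and the zero module as a unique minimum; equivalently, $A$ is the only vertex of $\mathcal{H}(\stautilt A)$ with no incoming arrows. Second, every vertex has exactly $|A|$ Hasse-quiver neighbors, arrows correspond precisely to left mutations, and whenever $N < M$ in $\stautilt A$ there exists a left mutation $L$ of $M$ satisfying $N \leq L < M$.

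The first step is to show $A \in \Delta$. Being finite, $\Delta$ has a vertex $M^{\ast}$ that is maximal under the induced partial order. Since $\Delta$ is a connected component, it is closed under taking Hasse-quiver neighbors, so every arrow ending at $M^{\ast}$ must come from a vertex of $\Delta$. Maximality in $\Delta$ then forces $M^{\ast}$ to have no incoming arrows globally, which by the first structural property above gives $M^{\ast} = A$.

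For any $N \in \stautilt A$, the final step is to connect $N$ to $A$ by a descending chain that stays inside $\Delta$. I would iteratively apply the left-mutation existence statement to construct a strictly decreasing sequence $A = L_0 > L_1 > L_2 > \cdots$ in which each $L_{i+1}$ is a left mutation of $L_i$ chosen with $N \leq L_{i+1} < L_i$. Each $L_i$ lies in $\Delta$ by induction, because left mutations correspond to Hasse-quiver arrows and $\Delta$ is closed under adjacency; the sequence cannot continue indefinitely inside the finite set $\Delta$, so it must terminate with $L_k = N$, proving $N \in \Delta$. The main obstacle I anticipate is the careful invocation of the AIR left-mutation existence result at each step, to guarantee that $L_{i+1}$ can simultaneously be chosen as a genuine Hasse-quiver neighbor of $L_i$ and as an upper bound for $N$; once that is established, termination comes essentially for free from the finiteness of $\Delta$ combined with the strict descent of the chain.
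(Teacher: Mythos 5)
Your argument is correct, and it is essentially the proof of \cite[Corollary 2.38]{AIR} itself; the paper does not reprove this statement but simply cites it, so there is nothing in the paper to diverge from. Both halves of your plan (ascend from a maximal vertex of $\Delta$ to $A$, then descend from $A$ to an arbitrary $N$ by left mutations staying inside $\Delta$, with termination forced by finiteness and strict descent) are exactly the ingredients AIR use. One caveat: your phrase ``equivalently, $A$ is the only vertex of $\mathcal{H}(\stautilt A)$ with no incoming arrows'' is not a formal consequence of $A$ being the unique maximum of the poset --- in an infinite poset a non-maximum element need not be covered by anything, so the existence of an incoming arrow at every $M\neq A$ is genuinely the dual mutation statement (for $U<T$ there is an arrow $U'\to U$ with $U<U'\leqslant T$), which must be quoted from \cite{AIR} alongside the left-mutation statement you do quote (or deduced from it via the duality $\Hom_A(-,A)$ between $\stautilt A$ and $\stautilt A^{\rm op}$). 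With that citation made explicit, your Step 1 is sound: a maximal vertex $M^{\ast}$ of the finite component $\Delta$ has no incoming arrows globally because $\Delta$ is closed under Hasse neighbours, hence $M^{\ast}=A$; and your Step 2 descent argument is complete as written.
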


We give an example to illustrate the constructions above.

\begin{example}
\label{example-simplest}
Let
\[
A=
\mathbb{F}
\left(
\xymatrix{
1\ar@<0.5ex>[r]^{\alpha}
&
2 \ar@<0.5ex>[l]^{\beta}
}
\right)
/\langle\alpha\beta,\beta\alpha\rangle.
\]
We denote by $S_1$, $S_2$ the simple $A$-modules and by $P_1$, $P_2$ the indecomposable projective $A$-modules.
We observe that
\[
S_1\simeq
\vcenter{\xymatrix@C=0.05cm@R=0.1cm{
1
}},
\quad
S_2\simeq
\vcenter{\xymatrix@C=0.05cm@R=0.1cm{
2
}},
\quad
P_1\simeq
\vcenter{\xymatrix@C=0.05cm@R=0.1cm{
1\ar@{-}[d]\\
2
}},
\quad
P_2\simeq
\vcenter{\xymatrix@C=0.05cm@R=0.1cm{
2\ar@{-}[d]\\
1
}}.
\]
Here, we describe $A$-modules via their composition factors.
For instance, we denote the simple module $S_i$ by $i$ and then, $\substack{1\\2}=\substack{S_1\\S_2}$ is an indecomposable module $M$ with a unique simple submodule $S_2$ such that $M/S_2\simeq S_1$.
We deduce that
\begin{itemize}[itemsep=-3pt]
  \item $\tautilt A=\{P_1\oplus P_2, P_1\oplus S_1, S_2\oplus P_2\}$,
  \item $\stautilt A=\{P_1\oplus P_2, P_1\oplus S_1, S_2\oplus P_2, S_1,S_2,0\}$,
\end{itemize}
and therefore, $A$ is $\tau$-tilting finite.
The Hasse quiver $\mathcal{H}(\stautilt A)$ is given as follows:
\[
\vcenter{
\xymatrix@C=1.2cm@R=0.1cm{
&
P_1\oplus S_1 \ar[r]^-{>}
&
S_1 \ar[dr]^-{>}
\\
P_1\oplus P_2 \ar[dr]_-{>} \ar[ur]^-{>}
&&&
0
\\
&
S_2\oplus P_2 \ar[r]_-{>}
&
S_2 \ar[ur]_-{>}
&
}
}.
\]
\end{example}

We recall some reduction methods for the $\tau$-tilting finiteness of $A$.

\begin{proposition}[{\cite[Theorem 5.12]{DIRRT}, \cite[Theorem 4.2]{DIJ-tau-tilting-finite}}]
\label{quotient and idempotent}
If $A$ is $\tau$-tilting finite, then
\begin{description}[itemsep=-3pt]
  \item[(1)] the quotient algebra $A/I$ is $\tau$-tilting finite for any two-sided ideal $I$ of $A$,
  \item[(2)] the idempotent truncation $eAe$ is $\tau$-tilting finite for any idempotent $e$ of $A$.
\end{description}
\end{proposition}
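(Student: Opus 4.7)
The plan is to apply the characterization from Proposition \ref{tau-tilting-finite-rigid}: $A$ is $\tau$-tilting finite if and only if $\mathsf{\tau\text{-}rigid}\, A$ is finite. For each statement, I will exhibit an injection from $\tau$-rigid modules (or their $2$-term presilting incarnations) over the reduced algebra into those over $A$, so that finiteness descends.

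For (1), let $M$ be a $\tau$-rigid right $A/I$-module, viewed as an $A$-module via the surjection $A \twoheadrightarrow A/I$. Taking a minimal projective presentation $P_1 \xrightarrow{f_1} P_0 \to M \to 0$ in $\mod A$ and applying $-\otimes_A A/I$ yields a projective presentation $\bar P_1 \xrightarrow{\bar f_1} \bar P_0 \to M \to 0$ in $\mod(A/I)$, with $\bar P_i = P_i/P_i I$. The standard criterion from \cite{AIR} says that $M$ is $\tau$-rigid if and only if $\Hom(f_1, M)$ is surjective, and this applies to arbitrary projective presentations (adding identity summands $P \xrightarrow{\mathrm{id}} P$ never destroys surjectivity). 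Since $M$ is annihilated by $I$, every $A$-linear map $P_i \to M$ factors uniquely through $\bar P_i$, giving $\Hom_A(P_i, M) = \Hom_{A/I}(\bar P_i, M)$. Under this identification, surjectivity of $\Hom(f_1, M)$ and of $\Hom(\bar f_1, M)$ are equivalent, hence $M$ is $\tau$-rigid over $A$ whenever it is $\tau$-rigid over $A/I$, producing the desired injection into $\mathsf{\tau\text{-}rigid}\, A$.

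For (2), I exploit the bijection between support $\tau$-tilting modules and $2$-term silting complexes in $\Kb(\proj A)$ noted in the introduction, together with its extension to $\tau$-rigid pairs versus $2$-term presilting complexes. The functor $(-)\,e \colon \add(eA) \to \proj(eAe)$ is an equivalence of additive categories, with inverse $-\otimes_{eAe} eA$, and it extends to an equivalence $\Kb(\add(eA)) \xrightarrow{\sim} \Kb(\proj(eAe))$. Because $\add(eA)$ sits inside $\proj A$ as a full additive subcategory, morphisms in $\Kb(\add(eA))$ coincide with those in $\Kb(\proj A)$. Hence a $2$-term presilting complex in $\Kb(\proj(eAe))$ lifts uniquely to a $2$-term presilting complex in $\Kb(\proj A)$, and the assignment is injective on isomorphism classes. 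Since every $2$-term presilting complex is a direct summand of some $2$-term silting complex (Bongartz completion), $\tau$-tilting finiteness of $A$ translates to finiteness of $2$-term presilting complexes over $A$, which then forces finiteness over $eAe$.

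The chief subtlety in both cases is working at the level where $\tau$-rigidity is detected. In (1) the point is that the Hom-surjectivity criterion is insensitive to minimality, so it transfers cleanly along the quotient; in (2) the functors between $\mod A$ and $\mod(eAe)$ are not fully faithful on the whole module category, so one must stay inside $\add(eA)$, i.e., work with projective presentations (equivalently, $2$-term complexes), which is precisely where $\tau$-rigidity lives.
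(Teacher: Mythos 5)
Your part (2) is sound and is essentially the standard route behind the cited results: the additive equivalence $\add(eA)\simeq \proj(eAe)$ embeds $\Kb(\proj(eAe))$ as a full triangulated subcategory of $\Kb(\proj A)$, two-term presilting objects are detected by the same Hom-vanishing on both sides, and finiteness of basic two-term presilting complexes over $A$ (via Bongartz completion, or directly from the finiteness of the set of $\tau$-rigid $A$-modules in Proposition~\ref{tau-tilting-finite-rigid}) forces finiteness of two-term silting complexes over $eAe$. Note that the paper itself gives no proof of this proposition; it is quoted from \cite{DIRRT} and \cite{DIJ-tau-tilting-finite}.

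Part (1), however, contains a genuine error: the implication ``$M$ is $\tau$-rigid over $A/I$ $\Rightarrow$ $M$ is $\tau$-rigid over $A$'', on which your injection rests, is false. The failure occurs exactly at the step you dismiss: reducing the minimal $A$-presentation $P_1\overset{f_1}{\longrightarrow}P_0\to M\to 0$ modulo $I$ can create summands of the form $P'\to 0$ in $(\bar P_1\to \bar P_0)$, not only identity summands $P\overset{\mathrm{id}}{\longrightarrow}P$; for such a summand, surjectivity of $\Hom_{A/I}(\bar f_1,M)$ forces $\Hom_{A/I}(P',M)=0$, and $\tau$-rigidity of $M$ over $A/I$ (a condition on the \emph{minimal} $A/I$-presentation) gives no control over this. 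Concretely, let $A=\mathbb{F}[x]/(x^2)$, $I=(x)$ and $M=\mathbb{F}$ the simple module: $M$ is projective, hence $\tau$-rigid, over $A/I\simeq\mathbb{F}$, but $\tau_A M\simeq M$, so $\Hom_A(M,\tau_A M)\neq 0$ and $M$ is not $\tau$-rigid over $A$; here $\bar f_1$ is the zero map $\mathbb{F}\to\mathbb{F}$, exhibiting the offending summand. In fact your Hom-identification proves the opposite (true) implication, namely that an $A/I$-module which is $\tau$-rigid over $A$ is $\tau$-rigid over $A/I$, and that direction does not give the desired descent of finiteness. To prove (1) one needs an invariant insensitive to this phenomenon, which is what the cited sources use: either the brick characterization of \cite{DIJ-tau-tilting-finite} ($A$ is $\tau$-tilting finite if and only if it has only finitely many bricks, and every brick of $A/I$ is a brick of $A$), or the surjection $\Tors A\twoheadrightarrow\Tors (A/I)$, $\mathcal{T}\mapsto\mathcal{T}\cap\mod(A/I)$, from \cite{DIRRT} combined with the torsion-class characterization of $\tau$-tilting finiteness.
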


\begin{proposition}[{e.g., \cite[Theorem 3.1]{Ada-rad-square-0}}]
\label{infinite-square}
Let $A=\mathbb{F}Q/I$ be a bound quiver algebra with an admissible ideal $I$.
If the quiver $Q$ contains
\[
\xymatrix@C=1cm@R=0.7cm{
\circ \ar@<0.5ex>[d] \ar@<0.5ex>[r]
&
\circ \ar@<0.5ex>[l] \ar@<0.5ex>[d]
\\
\circ \ar@<0.5ex>[u] \ar@<0.5ex>[r]
&
\circ \ar@<0.5ex>[u] \ar@<0.5ex>[l]
}
\]
as a subquiver, then $A$ is $\tau$-tilting infinite.
\end{proposition}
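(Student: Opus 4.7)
The plan is to reduce, via the two operations in Proposition \ref{quotient and idempotent}, to a concrete radical square zero algebra $B$ whose Gabriel quiver is exactly the displayed 4-cycle with doubled arrows, and then to appeal to the classification of $\tau$-tilting finite radical square zero algebras in \cite{Ada-rad-square-0}. Since $\tau$-tilting finiteness is preserved by both idempotent truncation and quotients, exhibiting such a $\tau$-tilting infinite $B$ built out of $A$ is enough to contradict the assumption that $A$ is $\tau$-tilting finite.

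Concretely, I would let $e = e_1 + e_2 + e_3 + e_4$ be the sum of the primitive idempotents corresponding to the four vertices of the subquiver. By Proposition \ref{quotient and idempotent}(2), the truncation $eAe$ is $\tau$-tilting finite whenever $A$ is, and its quiver contains the displayed 4-cycle as a subquiver because distinct arrows of $Q$ between these four vertices project to linearly independent elements of $e(\mathrm{rad}\,A / \mathrm{rad}^2 A)e$. Now form $B = eAe/J$, where $J$ is generated by $\mathrm{rad}^2(eAe)$ together with any arrows in the quiver of $eAe$ that do not belong to the displayed 4-cycle. By Proposition \ref{quotient and idempotent}(1), $B$ remains $\tau$-tilting finite, and by construction $B$ is a radical square zero bound quiver algebra whose quiver is exactly the 4-cycle with two oppositely oriented arrows on each of its four edges.

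It then suffices to observe that this specific radical square zero algebra $B$ is $\tau$-tilting infinite, which is precisely what \cite[Theorem 3.1]{Ada-rad-square-0} supplies, since that classification characterizes $\tau$-tilting finite radical square zero algebras in terms of the shape of their Gabriel quiver and our 4-cycle of Kronecker pieces falls outside the admissible list. The main step is therefore the reduction, which is routine; the substantive representation-theoretic input has been delegated to Adachi's classification, and the only care required is ensuring that no unintended arrows remain in the quiver of $B$, which is handled by explicitly quotienting them out in the definition of $J$.
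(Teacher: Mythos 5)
Your argument is correct and is essentially the route the paper intends: the paper states this proposition without its own proof, citing Adachi's Theorem 3.1, and your reduction via Proposition \ref{quotient and idempotent} (idempotent truncation to the four chosen vertices, then passing to the radical square zero quotient with the unwanted arrows killed) is exactly the standard way to bring that citation to bear. The only cosmetic point is that Adachi's criterion is phrased in terms of the separated quiver rather than the Gabriel quiver itself; for your algebra $B$ the separated quiver is a disjoint union of two $4$-cycles (type $\widetilde{A}_3$), hence not a union of Dynkin quivers, so $B$ is indeed $\tau$-tilting infinite and your conclusion follows.
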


It is worth mentioning that Eisele, Janssens and Raedschelders \cite{EJR18} provided a powerful reduction theorem, as shown below.

\begin{proposition}[{\cite[Theorem 1]{EJR18}}]
\label{center}
Let $I$ be a two-sided ideal generated by central elements which are contained in the Jacobson radical of $A$.
Then, there exists a poset isomorphism between $\stautilt A$ and $\stautilt (A/I)$.
\end{proposition}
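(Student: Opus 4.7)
The plan is to transfer the statement across the Adachi--Iyama--Reiten bijection $\stautilt A \cong \twosiltno \Kb(\proj A)$, since central ideals are much more transparent on the silting side than on the module side. I would first reduce to the case $I=\langle c\rangle$ for a single central element $c\in\operatorname{rad}(A)$ by picking a finite list of central generators of $I$ and composing the resulting one-generator reductions; at each step the ideal remains generated by centrals lying in the Jacobson radical, so the reduction is legitimate. The main work is then to show that the reduction functor $F=-\otimes_A A/I\colon \Kb(\proj A)\to \Kb(\proj A/I)$ induces a poset isomorphism on two-term silting complexes.

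A two-term complex $T=(P^{-1}\to P^0)$ reduces to $\overline T=(P^{-1}/IP^{-1}\to P^0/IP^0)$, and the rigidity check is the heart of the argument: assuming $\Hom_{\Kb}(\overline T,\overline T[1])=0$, one wants $\Hom_{\Kb}(T,T[1])=0$. Since $c$ is central, multiplication by $c$ is a morphism of complexes $T\to T$ and hence acts on the finite-dimensional $\mathbb{F}$-vector space $H=\Hom_{\Kb}(T,T[1])$; comparing the short exact sequence $0\to cT\to T\to \overline T\to 0$ of chain complexes with its analogue for $T[1]$ identifies $H/cH$ with a subquotient of $\Hom_{\Kb}(\overline T,\overline T[1])=0$. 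Because $c\in\operatorname{rad}(A)$ is nilpotent, Nakayama's lemma forces $H=0$. The generation condition $\thick(T)=\Kb(\proj A)$ is treated analogously, using that $c$-multiplication on the Verdier quotient $\Kb(\proj A)/\thick(T)$ must be nilpotent yet surjective, hence zero.

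For surjectivity of the induced map on $\twosiltno$, every two-term silting complex $\overline T$ over $A/I$ can be lifted: since $I\subseteq\operatorname{rad}(A)$, idempotents, projective modules, and morphisms between projectives all lift from $A/I$ to $A$, and the previous paragraph then ensures the lift is silting. Injectivity is immediate, as two lifts with isomorphic reductions are related by a chain map whose reduction is invertible and which is therefore itself invertible by idempotent-lifting plus standard homotopy arguments. The order is preserved because on both sides the partial order on $\stautilt$ is controlled by $\Fac$ of $H^0$ of the silting complex, and taking $H^0$ commutes with reduction modulo a radical ideal. The main obstacle I expect is pinning down the identification $H/cH\hookrightarrow\Hom_{\Kb}(\overline T,\overline T[1])$ rigorously: a naive base-change spectral sequence contributes $\mathrm{Tor}$-terms that one must rule out, and it is precisely the centrality of $c$ (rather than just membership in the radical) that allows $c$ to be interpreted as an endomorphism of $\mathrm{id}_{\Kb(\proj A)}$ and hence to be divided out cleanly.
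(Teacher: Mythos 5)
Before comparing: the paper itself offers no proof of this proposition --- it is quoted verbatim from \cite[Theorem 1]{EJR18} --- so your argument is measured against that source rather than against anything in the text. Your overall route (transfer everything to two-term silting complexes via Theorem \ref{tau-tiltiing-silting}, reduce to a single central element $c\in\mathrm{rad}(A)$, and compare $T$ with $\overline{T}=T\otimes_A A/I$) is the right one, and your core rigidity step is correct and is in fact the heart of the matter: for a two-term complex $T=(P^{-1}\to P^0)$ of projectives, $\Hom_{\Kb(\proj A)}(T,T[1])$ is a quotient of $\Hom_A(P^{-1},P^0)$ by homotopies, reduction modulo $I=\langle c\rangle$ kills exactly $c\cdot\Hom_A(P^{-1},P^0)$, so $H/cH\cong \Hom_{\Kb(\proj A/I)}(\overline{T},\overline{T}[1])$ --- an isomorphism, not merely a subquotient, and the Tor-terms you worry about never appear because all objects are complexes of projectives --- and nilpotence of $c$ plus Nakayama finishes both directions. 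The lifting of two-term complexes and your injectivity argument are also fine.

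Two steps, however, have genuine gaps. First, the generation condition: the assertion that multiplication by $c$ on the Verdier quotient $\Kb(\proj A)/\thick T$ is ``nilpotent yet surjective, hence zero'' is unsupported. Nilpotence is clear, but surjectivity (equivalently, that the cone of $c\cdot\mathrm{id}_X$ lies in $\thick T$ for every $X$) is precisely what would have to be extracted from $\thick\overline{T}=\Kb(\proj A/I)$, and there is no formal functoriality that does this: $\overline{T}$ is not even an object of $\Kb(\proj A)$. The standard repair in the two-term setting is to bypass generation entirely: by \cite[Proposition 3.3]{AIR} a two-term presilting complex $T$ is silting if and only if $|T|=|A|$, so it suffices to check that reduction preserves the number of pairwise non-isomorphic indecomposable summands; this follows by taking minimal representatives and using that $\mathrm{End}_{\Kb(\proj A)}(T)\to \mathrm{End}_{\Kb(\proj A/I)}(\overline{T})$ is surjective with kernel in the radical. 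Second, the poset part: ``$\Fac$ of $H^0$ commutes with reduction'' does not give order preservation, since $\Fac H^0(T)$ and $\Fac H^0(\overline{T})$ live in different module categories and the claimed compatibility of the two orders is essentially the statement to be proved. The easy fix is to stay on the silting side, where $U\leqslant T$ if and only if $\Hom_{\Kb(\proj A)}(T,U[1])=0$: applying your rigidity computation to the pair $(T,U)$ instead of $(T,T)$ shows this vanishing is equivalent to $\Hom_{\Kb(\proj A/I)}(\overline{T},\overline{U}[1])=0$, which gives the poset isomorphism at once.
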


Lastly, the following statement is obvious.

\begin{proposition}
\label{prop:number-product}
Suppose that $A$ is decomposed into $A_1,A_2,\ldots,A_\ell$ as blocks.
Then, $A$ is $\tau$-tilting finite if and only if $A_i$ is $\tau$-tilting finite for any $1\leqslant i\leqslant \ell$.
If this is the case, then
\[
\#\stautilt A
=
\prod_{i=1}^{\ell}\#\stautilt A_i.
\]
\end{proposition}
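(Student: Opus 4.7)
The plan is to exploit the fact that the block decomposition $A = A_1 \times A_2 \times \cdots \times A_\ell$ is a product decomposition of algebras, induced by a system of orthogonal central idempotents $e_1,\ldots,e_\ell$ with $A_i = Ae_i$ and $\sum e_i = 1$. This yields an equivalence of categories $\moduleCategory A \simeq \prod_{i=1}^\ell \moduleCategory A_i$ sending $M$ to $(Me_1,\ldots,Me_\ell)$, and all the constructions in Definition \ref{def-tau-tilting} are compatible with this product structure.

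First I would verify the block-wise compatibility of the basic ingredients. Since each $Ae_i$ is projective both as an $A$-module and as an $A_i$-module, a minimal projective presentation of $M = \bigoplus_{i=1}^\ell Me_i$ over $A$ is the direct sum of minimal projective presentations of the summands $Me_i$ over $A_i$. The Nakayama functor and the Auslander-Reiten translate are then additive in the blocks, so
\[
\tau_A M \;=\; \bigoplus_{i=1}^\ell \tau_{A_i}(Me_i),
\qquad
\Hom_A(M,\tau_A M) \;=\; \bigoplus_{i=1}^\ell \Hom_{A_i}(Me_i,\tau_{A_i}(Me_i)).
\]
Hence $M$ is $\tau$-rigid over $A$ if and only if $Me_i$ is $\tau$-rigid over $A_i$ for every $i$. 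The analogous identity $\Hom_A(P,M) = \bigoplus_i \Hom_{A_i}(Pe_i,Me_i)$ likewise splits block-wise.

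Next I would pass to support $\tau$-tilting pairs. Using the pair formulation recalled after Definition \ref{def-tau-tilting}, a pair $(M,P)$ with $M\in\moduleCategory A$ and $P\in\proj A$ is support $\tau$-tilting for $A$ precisely when $(Me_i,Pe_i)$ is support $\tau$-tilting for $A_i$ for each $i$. The $\tau$-rigidity and $\Hom$-vanishing conditions split as explained above, and for the count condition one notes that $|M|+|P| = \sum_i(|Me_i|+|Pe_i|)$ and $|A| = \sum_i |A_i|$; since each support $\tau$-rigid pair $(Me_i,Pe_i)$ over $A_i$ satisfies $|Me_i|+|Pe_i| \leqslant |A_i|$ by \cite{AIR}, global equality forces equality in every block. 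This yields the explicit bijection
\[
\stautilt A \;\longleftrightarrow\; \prod_{i=1}^\ell \stautilt A_i, \qquad M \;\longleftrightarrow\; (Me_1,\ldots,Me_\ell),
\]
and by Proposition \ref{tau-tilting-finite-rigid} the equivalent statement $\mathsf{\tau\text{-}rigid}\, A \simeq \prod_i \mathsf{\tau\text{-}rigid}\, A_i$ also holds.

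Both conclusions then follow formally: a product of sets is finite iff every factor is finite, so $A$ is $\tau$-tilting finite iff each $A_i$ is; and in the finite case the cardinality multiplies, giving $\#\stautilt A = \prod_{i=1}^\ell \#\stautilt A_i$. There is no substantive obstacle here — the whole argument is a compatibility check — and the only step worth writing out carefully is the first one, where one confirms that minimal projective presentations, and therefore $\tau$, respect the block decomposition.
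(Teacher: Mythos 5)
Your proposal is correct, and it is precisely the block-wise compatibility check the paper has in mind when it dismisses this proposition as obvious (no proof is given in the text). The decomposition via central idempotents, the splitting of minimal projective presentations, $\tau$, and the Hom-spaces, and the summand-count argument using the bound $|M|+|P|\leqslant|A|$ from \cite{AIR} together give the bijection $\stautilt A \leftrightarrow \prod_i \stautilt A_i$, from which both assertions follow immediately.
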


\subsection{Silting theory}

We denote by $\mathsf{C^b}(\proj A)$ the category of bounded complexes of projective $A$-modules and by $\Kb(\proj A)$ the corresponding homotopy category which is triangulated.
We denote by $\Db(\mod A)$ the bounded derived category of the abelian category $\mod A$.

\begin{definition}[{\cite[Definition 2.1]{AI-silting}}]
A complex $T\in \Kb(\proj A)$ is called \emph{presilting} if
\[
\Hom_{\Kb(\proj A)}(T,T[i])=0
\quad
\text{for any } i>0.
\]
A presilting complex $T$ is called \emph{silting} if $\thick T=\Kb(\proj A)$, where $\thick T$ is the smallest full triangulated subcategory containing $T$ which is closed under direct summands.
Moreover, a silting complex $T$ is called \emph{tilting} if $\Hom_{\Kb(\proj A)}(T,T[i])=0$ for any $i<0$.
\end{definition}
We introduce the silting mutation of silting complexes; see \cite[Definition 2.30]{AI-silting}.
Let $T=X\oplus Y$ be a basic silting complex in $\Kb(\proj A)$ with a direct summand $X$.
We take a minimal left $\mathsf{add}(Y)$-approximation $\pi$ and a triangle
\[
X \overset{\pi}{\longrightarrow} Z \longrightarrow X' \longrightarrow X[1].
\]
Then $\mu_X^-(T):=X'\oplus Y$ is again a basic silting complex in $\Kb(\proj A)$; see \cite[Theorem 2.31]{AI-silting}.
We call $\mu_X^-(T)$ the left silting mutation of $T$ with respect to $X$.
Dually, we can define the right mutation $\mu_X^+(T)$ of $T$ with respect to $X$, which is also silting.

Recall that a complex in $\Kb(\proj A)$ is called \emph{two-term} if it is isomorphic to a complex $T$ concentrated in degrees $0$ and $-1$, i.e.,
\[
T
:=
\left(
T^{-1}\overset{d_T^{-1}}{\longrightarrow} T^0
\right)
=
\xymatrix@C=0.7cm{
\cdots \ar[r]
&
0 \ar[r]
&
T^{-1} \ar[r]^{d_T^{-1}}
&
T^0 \ar[r]
&
0 \ar[r]
&
\cdots
}.
\]
We denote by $\twosilt A$ the set of isomorphism classes of basic two-term silting complexes in $\Kb(\proj A)$.
There is a partial order $\leqslant$ on the set $\twosilt A$ which is introduced by \cite[Theorem 2.11]{AI-silting}.
For any $T,U\in\twosilt A$, we have $U\leqslant T$ if and only if $\Hom_{\Kb(\proj A)}(T,U[1])=0$.
We then denote by $\mathcal{H}(\twosilt A)$ the Hasse quiver of $\twosilt A$, which is compatible with the irreducible left mutation of silting complexes.

\begin{proposition}[{\cite[Lemma 2.25, Theorem 2.27]{AI-silting}}]
\label{prop:signcoh}
Let $T=(T^{-1}\to T^0)\in \twosilt A$.
Then, $\add A=\add(T^0\oplus T^{-1})$ and $\add T^0\cap \add T^{-1}=0$.
\end{proposition}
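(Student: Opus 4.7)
The plan is to prove the two equalities separately.

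For the disjointness $\add T^0\cap\add T^{-1}=0$, I would argue by contradiction using only the presilting condition. Suppose an indecomposable projective $P$ were a direct summand of both $T^0$ and $T^{-1}$, and fix a basic representative of $T$ in $\Kb(\proj A)$. Minimality of this representative forces the $(P,P)$-block $\varphi$ of the differential $d\colon T^{-1}\to T^0$ to lie in $\mathrm{rad}\,\mathrm{End}_A(P)$; otherwise one could split off a contractible summand of the form $(P\xrightarrow{\sim}P)$ from $T$, contradicting basicness. I would then construct $f\colon T\to T[1]$ by setting $f^{-1}\colon T^{-1}\twoheadrightarrow P\hookrightarrow T^0=(T[1])^{-1}$ and $f^n=0$ for $n\neq -1$; this is automatically a chain map since $T$ vanishes in degree $1$. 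Any null-homotopy $(h^{-1},h^0)$ of $f$ would yield $f^{-1}=h^0\,d-d\,h^{-1}$, forcing the $(P,P)$-block of the left-hand side---namely $1_P$---to equal $h^0_P\varphi-\varphi\, h^{-1}_P\in\mathrm{rad}\,\mathrm{End}_A(P)$, which is impossible. Hence $\Hom_{\Kb(\proj A)}(T,T[1])\neq 0$, contradicting $T\in\twosilt A$.

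For $\add A=\add(T^0\oplus T^{-1})$, the inclusion $\supseteq$ is immediate since $T^0,T^{-1}\in\proj A$. For $\subseteq$, I would use the full silting condition $\thick T=\Kb(\proj A)$ through a Grothendieck-group argument. Since $T$ is silting, the g-vectors $g(T_i):=[T_i^0]-[T_i^{-1}]$ of its indecomposable summands $T_i$ form a $\mathbb{Z}$-basis of $K_0(\Kb(\proj A))\cong K_0(\proj A)\cong\mathbb{Z}^{|A|}$. If some indecomposable projective $Q$ of $A$ failed to appear as a summand of $T^0$ or $T^{-1}$, then every $[T_i^0]$ and $[T_i^{-1}]$, and hence every g-vector $g(T_i)$, would lie in the proper sublattice of $K_0(\proj A)$ spanned by the classes of those indecomposable projectives that do appear in $T^0\oplus T^{-1}$---a sublattice of rank strictly less than $|A|$. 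This contradicts $\{g(T_i)\}$ being a $\mathbb{Z}$-basis, so every indecomposable projective of $A$ must appear as a summand of $T^0\oplus T^{-1}$.

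The main obstacle is the second part's reliance on the structural facts that a silting complex in $\Kb(\proj A)$ has exactly $|A|$ indecomposable summands and that their g-vectors form a basis of $K_0(\proj A)$. These are standard but non-trivial consequences of the silting axioms (cf.\ Aihara--Iyama \cite{AI-silting}), and their proofs require some work with the triangulated structure of $\Kb(\proj A)$ and the derived Hom-pairing between $T$ and $A$. Once they are in hand, both equalities of the proposition fall out cleanly by the arguments above, and the disjointness is a compact application of presiltingness combined with the radical nature of the differential in a minimal two-term complex.
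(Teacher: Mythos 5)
The paper offers no proof of this proposition: it is imported verbatim from Aihara--Iyama \cite[Lemma 2.25, Theorem 2.27]{AI-silting}, so there is no internal argument to compare yours against, and I can only judge your proof on its own terms; in substance it is correct. Two small repairs to the disjointness half. First, splitting off a contractible summand $(P\xrightarrow{\ \sim\ }P)$ does not contradict basicness---contractible complexes are zero objects of $\Kb(\proj A)$---it contradicts minimality of the chosen representative; since the proposition is literally false for non-minimal representatives, you should say explicitly that you work with the minimal one, for which the whole differential $d$ is a radical morphism (every component between indecomposable summands is a non-isomorphism, not only the single $(P,P)$-block $\varphi$). Second, the $(P,P)$-block of $h^0d-dh^{-1}$ is not $h^0_P\varphi-\varphi h^{-1}_P$ but a sum of such terms over all indecomposable summands of $T^0$ and $T^{-1}$; this does no harm, because each term contains a component of the radical morphism $d$ as a factor, hence the whole block lies in $\mathrm{rad}\,\mathrm{End}_A(P)$ and the contradiction with $1_P$ survives. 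For the half $\add A\subseteq\add(T^0\oplus T^{-1})$ your lattice argument is fine, but note it is not independent of the source: that the classes of the indecomposable summands of a silting object form a $\mathbb{Z}$-basis of $K_0(\Kb(\proj A))$ (mere generation would already suffice for your argument) is precisely \cite[Theorem 2.27]{AI-silting}, one of the two results the paper itself cites for this proposition. Relative to the paper's treatment that is perfectly legitimate; a genuinely self-contained proof would in addition have to derive that generation statement from $\thick T=\Kb(\proj A)$, which is where the real triangulated-category work lies.
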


We explain the connection between $\tau$-tilting theory and silting theory.

\begin{theorem}[{\cite[Theorem 3.2]{AIR}}]
\label{tau-tiltiing-silting}
There exists a poset isomorphism between $\stautilt A$ and $\twosilt A$.
More precisely, the bijection is given by mapping a two-term silting complex $T$ to its $0$-th cohomology $H^0(T)$, and the inverse is given by
\[
\xymatrix@C=0.8cm@R=0.2cm{
M \ar@{|->}[r]
&
(P_1\oplus P \overset{[f,0]}{\longrightarrow} P_0)
},
\]
where $(M,P)$ is the corresponding support $\tau$-tilting pair and
\[
P_1 \overset{f}{\longrightarrow} P_0 \longrightarrow M \longrightarrow 0
\]
is the minimal projective presentation of $M$.
\end{theorem}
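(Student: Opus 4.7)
My plan is to construct explicit maps $\Phi \colon \twosilt A \to \stautilt A$ and $\Psi \colon \stautilt A \to \twosilt A$, verify that they are well-defined and mutually inverse, and then check they are order-preserving. The forward map takes the $0$-th cohomology together with the stalk summands concentrated in degree $-1$; the inverse map is the one written in the statement. The central technical tool, which I expect to drive the whole proof, is a computation of $\Hom_{\Kb(\proj A)}$ between two-term complexes in terms of $\Hom_A$ and the Auslander--Reiten translate of their cohomologies.

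To see that $\Phi$ is well-defined, take $T \in \twosilt A$. Using Proposition \ref{prop:signcoh}, I would split off the maximal summand of $T$ of the form $(P \xrightarrow{0} 0)$ and write the complement as $(P_1 \xrightarrow{f} P_0)$ with $f$ radical; set $M := \mathrm{coker}\, f$. A direct chain-level computation, combined with the definition of $\tau$ via minimal projective presentations, yields the natural isomorphism
\[
\Hom_{\Kb(\proj A)}\!\bigl((P_1\xrightarrow{f}P_0),\, (Q_1\xrightarrow{g}Q_0)[1]\bigr) \;\cong\; \Hom_A(M,\tau N),
\]
for any two such complexes with $M = \mathrm{coker}\, f$ and $N = \mathrm{coker}\, g$, together with $\Hom_{\Kb}((P\to 0),\, T[1]) \cong \Hom_A(P,\, M)$. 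Specialising to $U = T$, the presilting hypothesis forces both $\Hom_A(M,\tau M)=0$ and $\Hom_A(P,M)=0$, while Proposition \ref{prop:signcoh} gives $|M|+|P|=|A|$, so $(M,P)$ is a support $\tau$-tilting pair.

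For the inverse direction, given $M \in \stautilt A$ with support $\tau$-tilting pair $(M,P)$, let $P_1 \xrightarrow{f} P_0 \to M \to 0$ be minimal and set $\Psi(M) := (P_1\oplus P \xrightarrow{[f,0]} P_0)$. Minimality of $f$ together with the assumption $\Hom_A(P,M)=0$ (which forces $\add P \cap \add P_0 = 0$) ensures source and target of the differential share no common summands. The vanishing $\Hom_{\Kb}(\Psi(M), \Psi(M)[1]) = 0$ reduces by the Hom-formula above to $\Hom_A(M,\tau M)=0$ and $\Hom_A(P,M)=0$, both of which hold; the silting (as opposed to merely presilting) condition follows because any presilting complex in $\Kb(\proj A)$ with $|A|$ indecomposable summands automatically generates $\Kb(\proj A)$ as a thick subcategory. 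The identities $\Phi\circ\Psi = \mathrm{id}$ and $\Psi\circ\Phi = \mathrm{id}$ are then immediate by inspection of the constructions.

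For the poset statement, recall the orders $U \leqslant T \Leftrightarrow \Hom_{\Kb}(T,U[1]) = 0$ and $M \leqslant N \Leftrightarrow \Fac M \subseteq \Fac N$. Writing $\Psi(N) = (Q_1 \oplus Q \xrightarrow{[g,0]} Q_0)$ and $\Psi(M) = (P_1 \oplus P \xrightarrow{[f,0]} P_0)$, the same Hom-formula translates the vanishing $\Hom_{\Kb}(\Psi(N),\Psi(M)[1])=0$ into the pair of conditions $\Hom_A(N,\tau M)=0$ and $\Hom_A(Q,M)=0$; by the standard characterisation of $\Fac N$ for a support $\tau$-tilting pair $(N,Q)$, this pair of conditions is exactly $M \in \Fac N$, and since $\Fac N$ is closed under quotients while $\Fac M$ is generated by $M$, this is equivalent to $\Fac M \subseteq \Fac N$. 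I expect the main obstacle to be establishing the Hom-formula in a form that cleanly bookkeeps both the cohomological and the stalk-projective summands simultaneously; once this identification is in place, the remaining verifications are routine matchings of data.
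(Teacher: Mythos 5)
The theorem you are proving is quoted in the paper from \cite[Theorem 3.2]{AIR} without an in-paper proof, so there is nothing internal to compare against; judged on its own terms, your outline follows the AIR strategy, but your central tool is stated with the wrong variance. For minimal presentations $P_1\xrightarrow{f}P_0\to M\to 0$ and $Q_1\xrightarrow{g}Q_0\to N\to 0$, the correct computation is $\Hom_{\Kb(\proj A)}\bigl((P_1\to P_0),(Q_1\to Q_0)[1]\bigr)\cong D\Hom_A(N,\tau M)$: a chain map is just a morphism $P_1\to Q_0$, passing to homotopy classes identifies the Hom-space with the cokernel of $\Hom_A(P_0,N)\to\Hom_A(P_1,N)$, and Auslander--Reiten duality identifies that cokernel with $D\Hom_A(N,\tau M)$. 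Your formula $\cong\Hom_A(M,\tau N)$ reverses the roles of the two cohomologies (and drops the duality $D$): for $A=\mathbb{F}(1\to 2)$, $M=S_1$, $N=P_2$ one has $\Hom_A(M,\tau N)=0$ because $N$ is projective, yet $\Hom_{\Kb(\proj A)}(P_M,P_N[1])\cong\Hom_A(P_2,P_2)\neq 0$. The same flip propagates into your poset step, where you claim $M\in\Fac N$ is characterized by $\Hom_A(N,\tau M)=0$ and $\Hom_A(Q,M)=0$; the true characterization for a support $\tau$-tilting pair $(N,Q)$ is $\Fac N=\{X\mid \Hom_A(X,\tau N)=0 \text{ and } \Hom_A(Q,X)=0\}$, i.e.\ the condition is $\Hom_A(M,\tau N)=0$, as the case $N=A$, $Q=0$ shows ($\Fac A=\mod A$, while $\Hom_A(A,\tau M)=\tau M\neq0$ for nonprojective $M$). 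Your two reversed statements happen to compose to the correct end-to-end equivalence, but each intermediate assertion is false, so the argument as written does not stand; it is repaired by restoring the correct variance in both places (only vanishing is needed, so the duality $D$ is harmless).

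Two further steps are asserted rather than proved. The claim that any presilting complex in $\Kb(\proj A)$ with $|A|$ indecomposable summands automatically generates $\Kb(\proj A)$ is not available in that generality; what you actually need is the two-term version, which in \cite{AIR} rests on a Bongartz-type completion of two-term presilting complexes and is itself a nontrivial theorem. Similarly, $|M|+|P|=|A|$ does not follow from Proposition \ref{prop:signcoh} alone: one must also show that $H^0$ takes the indecomposable non-stalk summands of $T$ to pairwise nonisomorphic indecomposable modules, which is a genuine part of the proof of the cited theorem and should not be absorbed into ``immediate by inspection.''
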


Suppose that $P_1,P_2,\ldots,P_n$ are pairwise non-isomorphic indecomposable projective $A$-modules.
We denote by $[P_1],[P_2],\ldots,[P_n]$ the isomorphism classes of indecomposable complexes concentrated in degree $0$.
Clearly, the classes $[P_1],[P_2],\ldots,[P_n]$ in the triangulated category $\Kb(\proj A)$ form a standard basis of the Grothendieck group $K_0(\Kb(\proj A))$.
If a two-term complex $T$ in $\Kb(\proj A)$ is written as
\[
\left(
\bigoplus_{i=1}^n P_i^{\oplus b_i}
\longrightarrow
\bigoplus_{i=1}^n P_i^{\oplus a_i}
\right),
\]
the class $[T]$ can be identified by an integer vector
\[
g(T)
:=
(a_1-b_1,a_2-b_2,\ldots,a_n-b_n)
\in \mathbb{Z}^n,
\]
which is called the \emph{$g$-vector} of $T$.
We have the following statement.

\begin{proposition}[{\cite[Theorem 5.5]{AIR}}]
\label{g-vector-injection}
The map $T\mapsto g(T)$ from $\twosilt A$ to $\mathbb{Z}^n$ is an injection.
\end{proposition}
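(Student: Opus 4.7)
The plan is to prove injectivity by reconstructing the complex $T$ from $g(T)$, in two stages: first the underlying terms, then the differential.

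\textbf{Stage 1 (Terms).} I would write $T = (T^{-1} \xrightarrow{d} T^0) \in \twosilt A$ with $T^0 = \bigoplus_{i=1}^n P_i^{\oplus a_i}$ and $T^{-1} = \bigoplus_{i=1}^n P_i^{\oplus b_i}$, so that $g(T)_i = a_i - b_i$. By Proposition \ref{prop:signcoh}, the identity $\add A = \add(T^0 \oplus T^{-1})$ forces $a_i + b_i \geq 1$, while $\add T^0 \cap \add T^{-1} = 0$ forces $a_i b_i = 0$. Hence each coordinate $g(T)_i$ is nonzero with a definite sign, and the pair $(a_i, b_i)$ is read off from it: a positive entry gives $a_i = g(T)_i$, $b_i = 0$, and a negative entry gives $b_i = -g(T)_i$, $a_i = 0$. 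Thus $T^0$ and $T^{-1}$ are determined, as isomorphism classes in $\proj A$, by $g(T)$ alone.

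\textbf{Stage 2 (Differential).} By Theorem \ref{tau-tiltiing-silting}, $T$ corresponds bijectively to a support $\tau$-tilting pair $(M, P)$ with $M = H^0(T)$, and the inverse sends $(M, P)$ to $(P_1 \oplus P \xrightarrow{[f, 0]} P_0)$, where $P_1 \xrightarrow{f} P_0 \to M \to 0$ is the minimal projective presentation; in particular $P_0 = T^0$ and $P_1 \oplus P = T^{-1}$. Given $T, T' \in \twosilt A$ with $g(T) = g(T')$ corresponding to pairs $(M, P), (M', P')$, Stage 1 yields $P_0 \cong P_0'$ and $P_1 \oplus P \cong P_1' \oplus P'$, so in particular $\mathrm{top}(M) \cong \mathrm{top}(M')$. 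To conclude $T \cong T'$, I would then pair $T$ and $T'$ via the partial order on $\twosilt A$: compute $\Hom_{\Kb(\proj A)}(T, T'[1])$ and $\Hom_{\Kb(\proj A)}(T', T[1])$ using the shared terms, and show both vanish by leveraging the silting rigidity $\Hom_{\Kb(\proj A)}(X, X[1]) = 0$ for $X \in \{T, T'\}$ together with the identification of terms. This yields $T \leq T'$ and $T' \leq T$, hence $T \cong T'$ by antisymmetry of the order.

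\textbf{Main obstacle.} Stage 1 is essentially bookkeeping once Proposition \ref{prop:signcoh} is in hand, but Stage 2 is the genuine heart of the argument. The data $(T^0, T^{-1})$ does not by itself distinguish $P_1$ from $P$ inside $T^{-1}$ (the split depends on the support idempotent $e$ with $\add(eA) = \add P$, equivalently on which $S_i$ fail to be composition factors of $M$, and this information is not visible at the level of the projective terms). Promoting "same terms" to "isomorphic complexes" therefore requires a genuine use of the silting condition; I expect the cleanest route to be through the compatibility of the partial orders on $\stautilt A$ and $\twosilt A$ under the bijection of Theorem \ref{tau-tiltiing-silting}, using that the $\tau$-tilting picture on the module side constrains how two candidate cokernels can differ given a common projective presentation.
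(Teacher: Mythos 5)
Your Stage 1 is correct: Proposition \ref{prop:signcoh} gives $a_ib_i=0$ and $a_i+b_i\geqslant 1$ for each $i$, so the entries of $g(T)$ are nonzero and the terms $T^0$, $T^{-1}$ are recovered from $g(T)$ up to isomorphism. The endgame is also fine: if one knows $\Hom_{\Kb(\proj A)}(T,T'[1])=0=\Hom_{\Kb(\proj A)}(T',T[1])$, then $T\leqslant T'$ and $T'\leqslant T$, and antisymmetry of the order of \cite{AI-silting} (or the fact that a presilting complex containing a silting complex as a summand lies in its additive closure) gives $T\simeq T'$. The genuine gap is the middle step. You propose to obtain the two cross-vanishings ``by leveraging the silting rigidity $\Hom(X,X[1])=0$ for $X\in\{T,T'\}$ together with the identification of terms,'' but that is precisely the content of the proposition and no mechanism is offered. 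Concretely, $\Hom_{\Kb(\proj A)}(T,T'[1])$ is the space of module maps $T^{-1}\to T'^0$ modulo those of the form $u\circ d_T+d_{T'}\circ v$; self-rigidity of $T$ only says that a map $T^{-1}\to T^0\cong T'^0$ can be written as $u\circ d_T+d_T\circ v'$, with the wrong differential in the second summand, and self-rigidity of $T'$ has the dual defect. Equality of the underlying terms plus rigidity of each complex separately does not formally imply the cross-vanishing, and if it did, the theorem would reduce to bookkeeping about graded pieces, which it is not. Likewise, on the module side, knowing $P_0\simeq P_0'$ and $P_1\oplus P\simeq P_1'\oplus P'$ (hence equal tops) does not by itself constrain the cokernels enough; as you yourself note, even the splitting of $T^{-1}$ into $P_1$ and $P$ is not visible from the terms.

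For comparison, the paper does not prove this statement at all: it is imported from \cite[Theorem 5.5]{AIR}. The proof there works on the module side with support $\tau$-tilting pairs and supplies exactly the ingredient you are missing, namely a numerical (Auslander--Reiten duality type) formula expressing the relevant $\Hom$- and $\tau$-$\Hom$-spaces in terms of the pairing of the $g$-vector with dimension vectors; equality of $g$-vectors then forces the mutual orthogonality, after which maximality of both pairs yields coincidence. Nothing quoted in the present paper (Proposition \ref{prop:signcoh}, Theorem \ref{tau-tiltiing-silting}, the order on $\twosilt A$) substitutes for that input, so your Stage 2 cannot be closed with the tools you invoke; as written, the argument assumes what is to be proved.
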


We give an example here.

\begin{example}
\label{example-involution}
Let
\[
A=
\mathbb{F}
\left(
\xymatrix{
1\ar@<0.5ex>[r]^{\alpha}
&
2\ar@<0.5ex>[l]^{\beta}
}
\right)
/\langle\alpha\beta,\beta\alpha\rangle.
\]
In Example \ref{example-simplest}, the Hasse quiver $\mathcal{H}(\stautilt A)$ is described.
By applying Theorem \ref{tau-tiltiing-silting}, the Hasse quiver $\mathcal{H}(\twosilt A)$ is
\begin{center}
\begin{tikzpicture}[
  shorten >=1pt,
  auto,
  node distance=0cm,
  node_style/.style={font=},
  edge_style/.style={draw=black}
]
\node[node_style] (v) at (-1,0) {$
\left[
\begin{smallmatrix}
0\longrightarrow P_1\\
\oplus\\
0\longrightarrow P_2
\end{smallmatrix}
\right]
$};

\node[node_style] (v1) at (2,-1) {$
\left[
\begin{smallmatrix}
P_1\overset{\beta}{\longrightarrow} P_2\\
\oplus\\
0\longrightarrow P_2
\end{smallmatrix}
\right]
$};

\node[node_style] (v12) at (5,-1) {$
\left[
\begin{smallmatrix}
P_1\overset{\beta}{\longrightarrow} P_2\\
\oplus\\
P_1\longrightarrow 0
\end{smallmatrix}
\right]
$};

\node[node_style] (v2) at (2,1) {$
\left[
\begin{smallmatrix}
0\longrightarrow P_1\\
\oplus\\
P_2\overset{\alpha}{\longrightarrow} P_1
\end{smallmatrix}
\right]
$};

\node[node_style] (v21) at (5,1) {$
\left[
\begin{smallmatrix}
P_2\longrightarrow 0\\
\oplus\\
P_2\overset{\alpha}{\longrightarrow} P_1
\end{smallmatrix}
\right]
$};

\node[node_style] (v0) at (8,0) {$
\left[
\begin{smallmatrix}
P_1\longrightarrow 0\\
\oplus\\
P_2\longrightarrow 0
\end{smallmatrix}
\right]
$};

\draw[->] (v) edge (v1);
\draw[->] (v1) edge (v12);
\draw[->] (v12) edge (v0);
\draw[->] (v) edge (v2);
\draw[->] (v2) edge (v21);
\draw[->] (v21) edge (v0);
\end{tikzpicture}.
\end{center}
One can use this to verify Proposition \ref{prop:signcoh}.
We conclude that the set of $g$-vectors is
\[
\{(1,1),(2,-1),(1,-2),(-1,2),(-2,1),(-1,-1)\},
\]
where the $g$-vectors are illustrated in $\mathbb{Z}^2$ as follows:
\begin{center}
\scalebox{0.9}{
\begin{tikzpicture}
  \draw[->] (-2.5,0)--(2.5,0) node[below right] {$x_1$};
  \foreach \x in {-2,-1,1,2}
    \draw (\x,0.1)--(\x,0) node[below] {$\x$};

  \draw[->] (0,-2.5)--(0,2.5) node[left] {$x_2$};
  \foreach \x in {-2,-1,1,2}
    \draw (0.1,\x)--(0,\x) node[left] {$\x$};

  \node at (1,1) {$\bullet$};
  \node at (2,-1) {$\bullet$};
  \node at (1,-2) {$\bullet$};
  \node at (-2,1) {$\bullet$};
  \node at (-1,2) {$\bullet$};
  \node at (-1,-1) {$\bullet$};
\end{tikzpicture}
}.
\end{center}
\end{example}

\subsection{Sign decomposition}
\label{sec:sign-decomp}

In this subsection, we introduce the notion of \emph{sign decomposition} of the set $\twosilt A$, which is originally considered in \cite{Aoki} as a means to classify torsion classes in radical square zero algebras, and then generalized in \cite{AHIKM} where the authors study fans and polytopes in $\tau$-tilting theory.
The main effect of sign decomposition is that the restriction of the set of $g$-vectors for the original algebra to each orthant can be described by the set of $g$-vectors for a simpler algebra.

We define $[m,n]:=\{m,m+1,\ldots,n\}$ for two integers $1\leqslant m\leqslant n$.
Recall that $A=\mathbb{F}Q/I$ is a bound quiver algebra, where the vertex set of $Q$ is given by $[1,n]=\{1,\ldots,n\}$.
We denote by $e_i$ the primitive idempotent of $A$ corresponding to the vertex $i$, and by $P_i:=e_iA$ the corresponding indecomposable projective $A$-module.

We denote by
\[
\mathsf{M}(n)
:=
\{\epsilon=(\epsilon(1),\ldots,\epsilon(n)) \colon [1,n]\to \{\pm1\}\}
\]
the set of all maps from $[1,n]$ to $\{\pm1\}$, which endows an involution $\epsilon\mapsto -\epsilon$ given by $(-\epsilon)(i)=-\epsilon(i)$ for all $i\in [1,n]$.
We sometimes simply use $\epsilon(i)=+$, respectively $\epsilon(i)=-$, to indicate $\epsilon(i)=+1$, respectively $\epsilon(i)=-1$, without causing confusion.
For each $\epsilon\in \mathsf{M}(n)$, let $\mathbb{Z}_{\epsilon}^n$ be the area determined by the linear inequality $\epsilon(i)x_i>0$ for all $i\in [1,n]$, that is,
\[
\mathbb{Z}_{\epsilon}^n
:=
\{x=(x_1,\ldots,x_n) \in \mathbb{Z}^n
\mid
\epsilon(i)x_i>0,\ i\in [1,n]\}.
\]
For example, we have $\mathsf{M}(2)=\{(+,+),(+,-),(-,+),(-,-)\}$ and $\mathbb{Z}_{\epsilon}^2$ with $\epsilon\in\mathsf{M}(2)$ is illustrated as follows:
\begin{center}
\begin{tabular}{cccc}
\begin{tikzpicture}
  \draw[->] (-1,0)--(1,0) node[below right] {$x_1$};
  \draw[->] (0,-1)--(0,1.1) node[left] {$x_2$};
  \fill[gray!35] (0,0) rectangle (0.9,0.9);
  \draw (0.2,-1.4) node {$\mathbb{Z}_{(+,+)}^2$};
\end{tikzpicture}
&
\begin{tikzpicture}
  \draw[->] (-1,0)--(1,0) node[below right] {$x_1$};
  \draw[->] (0,-1)--(0,1.1) node[left] {$x_2$};
  \fill[gray!35] (0,0) rectangle (0.9,-0.9);
  \draw (0.2,-1.4) node {$\mathbb{Z}_{(+,-)}^2$};
\end{tikzpicture}
&
\begin{tikzpicture}
  \draw[->] (-1,0)--(1,0) node[below right] {$x_1$};
  \draw[->] (0,-1)--(0,1.1) node[left] {$x_2$};
  \fill[gray!35] (0,0) rectangle (-0.9,0.9);
  \draw (0.2,-1.4) node {$\mathbb{Z}_{(-,+)}^2$};
\end{tikzpicture}
&
\begin{tikzpicture}
  \draw[->] (-1,0)--(1,0) node[below right] {$x_1$};
  \draw[->] (0,-1)--(0,1.1) node[left] {$x_2$};
  \fill[gray!35] (0,0) rectangle (-0.9,-0.9);
  \draw (0.2,-1.4) node {$\mathbb{Z}_{(-,-)}^2$};
\end{tikzpicture}
\end{tabular}.
\end{center}

We consider a subset of $\twosilt A$ which plays a central role in sign decomposition.
For each $\epsilon\in \mathsf{M}(n)$, we define
\[
\twosiltep A
:=
\{T\in \twosilt A \mid g(T)\in \mathbb{Z}_{\epsilon}^n\}.
\]
More generally, for a given subset $\mathsf{M}$ of $\mathsf{M}(n)$, we define
\[
\twosiltno_{\mathsf{M}} A
:=
\bigcup_{\epsilon\in \mathsf{M}}\twosiltep A.
\]
Note that the union is disjoint by the definition of $\mathbb{Z}_{\epsilon}^n$.

\begin{proposition}
\label{prop:sgndecomp}
We have $\twosilt A=\twosiltno_{\mathsf{M}(n)} A$.
\end{proposition}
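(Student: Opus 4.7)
The plan is to show that every two-term silting complex has a $g$-vector with all coordinates nonzero, which immediately decomposes $\twosilt A$ as a disjoint union over $\mathsf{M}(n)$.

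First I would fix $T = (T^{-1}\to T^0) \in \twosilt A$ and write
\[
T^0 = \bigoplus_{i=1}^n P_i^{\oplus a_i}, \qquad T^{-1} = \bigoplus_{i=1}^n P_i^{\oplus b_i},
\]
so that $g(T)_i = a_i - b_i$. The task reduces to showing that for each $i\in [1,n]$, exactly one of $a_i, b_i$ is zero and the other is positive; once this is done, $\mathrm{sign}(g(T)_i)$ is well-defined for every $i$, and setting $\epsilon(i) := \mathrm{sign}(g(T)_i) \in \{\pm 1\}$ places $g(T)$ in $\mathbb{Z}^n_{\epsilon}$, i.e.\ $T\in \twosiltep A$.

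The key input is Proposition \ref{prop:signcoh}. From $\add T^0 \cap \add T^{-1} = 0$, no indecomposable projective $P_i$ can be a summand of both $T^0$ and $T^{-1}$, so $a_i b_i = 0$ for every $i$. From $\add A = \add(T^0 \oplus T^{-1})$ together with the decomposition $A = \bigoplus_{i=1}^n P_i$ into pairwise non-isomorphic indecomposable projectives, each $P_i$ must be a summand of $T^0 \oplus T^{-1}$, giving $a_i + b_i \geqslant 1$. Combining these two facts shows that exactly one of $a_i, b_i$ is positive, hence $g(T)_i \neq 0$, as required.

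This proves $\twosilt A \subseteq \twosiltno_{\mathsf{M}(n)} A$; the reverse inclusion is immediate from the definition $\twosiltno_{\mathsf{M}(n)} A = \bigcup_{\epsilon\in \mathsf{M}(n)} \twosiltep A \subseteq \twosilt A$. Finally, disjointness of the union is built into the definition: for distinct $\epsilon, \epsilon' \in \mathsf{M}(n)$ the orthants $\mathbb{Z}^n_{\epsilon}$ and $\mathbb{Z}^n_{\epsilon'}$ are disjoint, and $T\mapsto g(T)$ is an injection by Proposition \ref{g-vector-injection}. There is no real obstacle here; the statement is essentially a direct consequence of Proposition \ref{prop:signcoh}, and the only point worth being careful about is invoking the decomposition $A = \bigoplus_i P_i$ to guarantee that every index $i$ actually contributes a nonzero coordinate.
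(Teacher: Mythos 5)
Your proof is correct and follows essentially the same route as the paper: both reduce the statement to Proposition \ref{prop:signcoh}, from which $\add A = \add(T^0\oplus T^{-1})$ and $\add T^0\cap\add T^{-1}=0$ force every coordinate of $g(T)$ to be nonzero, so that $T$ lands in a unique $\twosiltep A$. Your write-up merely makes the multiplicities $a_i, b_i$ explicit; no substantive difference.
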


\begin{proof}
It is obvious that the set on the right is contained in the set on the left.
We assume that $T=(T^{-1}\to T^0)\in \twosilt A$.
By Proposition \ref{prop:signcoh}, each indecomposable projective $A$-module $P_i$ lies in precisely one of $\add T^0$ and $\add T^{-1}$.
In other words, the $i$-th entry of $g(T)$ is either positive or negative for all $1\leqslant i\leqslant n$.
Hence, there must exist a map $\epsilon\in \mathsf{M}(n)$ such that $T\in \twosiltep A$.
\end{proof}

\begin{proposition}[{\cite[Theorem 2.14]{AIR}}]
\label{prop:opposite}
Let $\mathsf{M}$ be a subset of $\mathsf{M}(n)$ and $-\mathsf{M}:=\{-\epsilon \mid \epsilon\in \mathsf{M}\}$.
Then the duality $(-)^\ast:=\Hom_A(-,A)$ gives a bijection
\[
\twosiltno_{\mathsf{M}} A
\overset{1-1}{\longleftrightarrow}
\twosiltno_{-\mathsf{M}} A^{\sf op}.
\]
\end{proposition}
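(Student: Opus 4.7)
The plan is to reduce the claim to the standard duality between two-term silting complexes over $A$ and over $A^{\sf op}$ coming from \cite[Theorem 2.14]{AIR}, together with a direct computation of how $g$-vectors behave under $(-)^\ast$.

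First I would recall that $(-)^\ast=\Hom_A(-,A)$ is a duality $\proj A \leftrightarrow \proj A^{\sf op}$ that sends the indecomposable projective $P_i=e_iA$ to $Ae_i$, i.e., the $i$-th indecomposable projective of $A^{\sf op}$; call it $P_i^{\sf op}$. This duality extends termwise (with a reversal of degrees) to a contravariant equivalence between the homotopy categories of two-term complexes. Concretely, if
\begin{center}
$T=\Bigl(\bigoplus_{i=1}^n P_i^{\oplus b_i}\xrightarrow{\ d\ }\bigoplus_{i=1}^n P_i^{\oplus a_i}\Bigr)\in \twosilt A$,
\end{center}
then applying $(-)^\ast$ and re-indexing so that the result sits in degrees $-1$ and $0$ yields the two-term complex
\begin{center}
$T^\ast=\Bigl(\bigoplus_{i=1}^n (P_i^{\sf op})^{\oplus a_i}\xrightarrow{\ d^\ast\ }\bigoplus_{i=1}^n (P_i^{\sf op})^{\oplus b_i}\Bigr)$
\end{center}
over $A^{\sf op}$. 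By \cite[Theorem 2.14]{AIR}, this construction gives a bijection $\twosilt A \xrightarrow{\ 1\text{-}1\ } \twosilt A^{\sf op}$, so two-term siltingness is preserved.

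Next I would read off the effect of this bijection on $g$-vectors. By construction, the $i$-th entry of $g(T^\ast)$ is $b_i-a_i$, which is precisely the negative of the $i$-th entry of $g(T)$; hence $g(T^\ast)=-g(T)$. Combining this with the definition of $\mathbb{Z}^n_\epsilon$ shows that for every $\epsilon\in\mathsf{M}(n)$, $g(T)\in\mathbb{Z}_\epsilon^n$ if and only if $g(T^\ast)\in\mathbb{Z}_{-\epsilon}^n$, i.e.,
\begin{center}
$T\in\twosiltep A \ \Longleftrightarrow\ T^\ast \in \twosiltno_{-\epsilon} A^{\sf op}$.
\end{center}
Taking the disjoint union over $\epsilon\in\mathsf{M}$ (using Proposition \ref{prop:sgndecomp} to ensure these unions are disjoint) then yields the desired bijection $\twosiltno_{\mathsf{M}} A \leftrightarrow \twosiltno_{-\mathsf{M}} A^{\sf op}$.

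The only subtle point, which I would be careful about, is the bookkeeping of degrees when applying the contravariant functor $(-)^\ast$ to a two-term complex: the term in degree $0$ must be sent to the term in degree $-1$ (and vice versa) so that the image is again a two-term complex, which is exactly what accounts for the sign flip in the $g$-vector. Apart from this, the argument is essentially a direct application of \cite[Theorem 2.14]{AIR} together with the definition of $\twosiltep$.
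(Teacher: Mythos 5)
Your proof is correct and is exactly the argument the paper intends: the paper gives no proof of this proposition, only the citation to \cite[Theorem 2.14]{AIR}, and your write-up simply makes explicit the two ingredients that citation supplies (the duality $\twosilt A \leftrightarrow \twosilt A^{\sf op}$ and the degree reversal forcing $g(T^\ast)=-g(T)$). The bookkeeping of degrees and the resulting sign flip on $g$-vectors is indeed the only point requiring care, and you have handled it correctly.
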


Next, we associate an upper triangular matrix algebra $A_{\epsilon}$ to each $\epsilon\in \mathsf{M}(n)$.

\begin{definition}
\label{def:sd}
For an arbitrary map $\epsilon\in \mathsf{M}(n)$, let
\[
e_{\epsilon,+}
:=
\sum_{\epsilon(i)=+} e_i
\quad
\text{and}
\quad
e_{\epsilon,-}
:=
\sum_{\epsilon(i)=-} e_i.
\]
We define an upper triangular matrix algebra
\[
A_{\epsilon}
:=
\begin{pmatrix}
\dfrac{e_{\epsilon,+}Ae_{\epsilon,+}}{J_{\epsilon,+}}
&
e_{\epsilon,+}Ae_{\epsilon,-}
\\
0
&
\dfrac{e_{\epsilon,-}Ae_{\epsilon,-}}{J_{\epsilon,-}}
\end{pmatrix},
\]
where $J_{\epsilon,+}$, respectively $J_{\epsilon,-}$, is a two-sided ideal of $e_{\epsilon,+}Ae_{\epsilon,+}$, respectively $e_{\epsilon,-}Ae_{\epsilon,-}$, consisting of all $x$ such that $x$ is in the Jacobson radical and $xy=0$, respectively $yx=0$, for all $y\in e_{\epsilon,+}Ae_{\epsilon,-}$.
\end{definition}

We denote by $e'_1,e'_2,\ldots,e'_n$ the primitive idempotents of $A_{\epsilon}$, which are naturally corresponding to $e_1,e_2,\ldots,e_n$ of $A$, respectively.
Under this correspondence, we can regard that the quiver of $A_{\epsilon}$ has $[1,n]$ as the vertex set.
If we set $P_i':=e_i'A_{\epsilon}$ for $i\in [1,n]$, the correspondence $[P_i]\mapsto [P_i']$ induces an isomorphism
\[
K_0(\Kb(\proj A))
\longrightarrow
K_0(\Kb(\proj A_{\epsilon}))
\]
of Grothendieck groups.
Based on the construction of the algebra $A_{\epsilon}$, we have
\[
\Hom_A(e_{\epsilon,-}A,e_{\epsilon,+}A)
=
e_{\epsilon,+}Ae_{\epsilon,-}
\simeq
e'_{\epsilon,+}A_{\epsilon}e'_{\epsilon,-}
=
\Hom_{A_{\epsilon}}(e'_{\epsilon,-}A_{\epsilon},e'_{\epsilon,+}A_{\epsilon}),
\]
where
\[
e'_{\epsilon,+}:=\sum_{\epsilon(i)=+} e'_i
\quad
\text{and}
\quad
e'_{\epsilon,-}:=\sum_{\epsilon(i)=-} e'_i.
\]
This gives rise to a bijection
\begin{center}
$\{T=(T^{-1}\to T^0)\in \Kb(\proj A)\mid g(T)\in \mathbb{Z}_{\epsilon}^{n}\}$
\end{center}
\begin{center}
$\overset{1-1}{\longleftrightarrow} \{U=(U^{-1}\to U^0)\in \Kb(\proj A_{\epsilon})\mid g(U)\in \mathbb{Z}_{\epsilon}^{n}\}$
\end{center}
over two-term complexes in the homotopy categories.
Moreover, if we restrict our interest to two-term silting complexes, then all information about $\twosiltep A$ can be obtained from the algebra $A_{\epsilon}$, as shown below.

\begin{proposition}[{\cite[Theorem 4.23]{AHIKM}}]
\label{prop:Aepsilon}
For $\epsilon\in \mathsf{M}(n)$, we have an isomorphism
\[
\twosiltep A
\overset{\sim}{\longrightarrow}
\twosiltep A_{\epsilon},
\]
which preserves the $g$-vectors of two-term silting complexes.
In particular, $A$ is $\tau$-tilting finite if $A_{\epsilon}$ is $\tau$-tilting finite for all $\epsilon\in \mathsf{M}(n)$.
\end{proposition}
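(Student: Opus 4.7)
The plan is to realize the desired isomorphism as the restriction of the bijection on two-term complexes described in the paragraph preceding the proposition, and then to verify that the silting condition is preserved across this bijection. By Proposition~\ref{prop:signcoh} and the sign condition $g(T)\in \mathbb{Z}_\epsilon^n$, every $T \in \twosiltep A$ takes the form $T = (e_{\epsilon,-}A^{\oplus \vec{b}} \xrightarrow{f} e_{\epsilon,+}A^{\oplus \vec{a}})$, whose differential is a matrix with entries in $e_{\epsilon,+}Ae_{\epsilon,-}$. Transporting $f$ via the bimodule isomorphism $e_{\epsilon,+}Ae_{\epsilon,-} \simeq e'_{\epsilon,+}A_\epsilon e'_{\epsilon,-}$ to a matrix $f'$, I obtain a two-term complex $U = (e'_{\epsilon,-}A_\epsilon^{\oplus \vec{b}} \xrightarrow{f'} e'_{\epsilon,+}A_\epsilon^{\oplus \vec{a}})$ in $\Kb(\proj A_\epsilon)$ with the same $g$-vector $\vec{a}-\vec{b} \in \mathbb{Z}_\epsilon^n$, and this assignment furnishes a bijection on isomorphism classes of two-term complexes whose $g$-vectors lie in $\mathbb{Z}_\epsilon^n$.

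Since $T \mapsto U$ evidently preserves the number $|T|=|U|$ of indecomposable summands, and since two-term silting means two-term presilting together with the dimension count $|T|=n$, it suffices to show
\begin{equation*}
\Hom_{\Kb(\proj A)}(T, T[1]) = 0 \quad \Longleftrightarrow \quad \Hom_{\Kb(\proj A_\epsilon)}(U, U[1]) = 0.
\end{equation*}
A standard computation for two-term complexes yields
\begin{equation*}
\Hom_{\Kb(\proj A)}(T, T[1]) \cong \Hom_A(T^{-1}, T^0) \big/ \bigl( f \cdot \End_A(T^{-1}) + \End_A(T^0) \cdot f \bigr),
\end{equation*}
where the subspace in the denominator records the null-homotopy relation, and similarly for $U$ over $A_\epsilon$.

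The crucial technical step, and the main obstacle, is that the ideals $J_\pm$ appearing in the definition of $A_\epsilon$ are designed precisely so that $J_+ \cdot e_{\epsilon,+}Ae_{\epsilon,-} = 0$ and $e_{\epsilon,+}Ae_{\epsilon,-} \cdot J_- = 0$. Hence the natural left action of $\End_A(T^0)$ on $\Hom_A(T^{-1}, T^0)$ (whose matrix entries lie in $e_{\epsilon,+}Ae_{\epsilon,-}$) factors through the quotient $\End_{A_\epsilon}(U^0)$, and symmetrically the right action of $\End_A(T^{-1})$ factors through $\End_{A_\epsilon}(U^{-1})$. Under the identification $\Hom_A(T^{-1}, T^0) \cong \Hom_{A_\epsilon}(U^{-1}, U^0)$ the two denominator subspaces coincide, so the two quotient spaces in the homotopy categories are isomorphic. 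This establishes the claimed bijection $\twosiltep A \cong \twosiltep A_\epsilon$ preserving $g$-vectors.

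The final assertion follows immediately from Proposition~\ref{prop:sgndecomp}: $\twosilt A = \bigsqcup_{\epsilon\in \mathsf{M}(n)} \twosiltep A$ is a finite union, so if each $A_\epsilon$ is $\tau$-tilting finite, then each $\twosiltep A_\epsilon$ is finite by Proposition~\ref{tau-tilting-finite-rigid} together with Theorem~\ref{tau-tiltiing-silting}, the isomorphism forces each $\twosiltep A$ to be finite, and hence $\twosilt A$ itself is finite, which makes $A$ $\tau$-tilting finite.
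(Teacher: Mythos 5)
Your proof is essentially correct, and it is worth noting that the paper does not actually prove this proposition: it is imported from \cite{AHIKM} (in preparation), with only the bimodule identification $e_{\epsilon,+}Ae_{\epsilon,-}\simeq e'_{\epsilon,+}A_{\epsilon}e'_{\epsilon,-}$ and the induced correspondence of two-term complexes recorded beforehand. What you wrote is the natural filling-in of that citation along exactly the lines the paper's setup suggests: Proposition \ref{prop:signcoh} pins down the shape of any $T\in\twosiltep A$, the transported complex $U$ has the same $g$-vector, the standard formula for $\Hom_{\Kb(\proj A)}(T,T[1])$ applies, and the annihilation property $J_+\cdot e_{\epsilon,+}Ae_{\epsilon,-}=0=e_{\epsilon,+}Ae_{\epsilon,-}\cdot J_-$ (the very purpose of Definition \ref{def:sd}) makes the null-homotopy subspaces correspond, so the presilting condition is preserved and reflected. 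The reduction of two-term silting to two-term presilting plus the count $|T|=|A|$ is a fact from \cite{AIR} and should be cited as such.

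The one step you wave through is that the correspondence ``evidently'' preserves the number of indecomposable summands. This is not purely formal: homotopies $T^0\to T^{-1}$ over $A$ have entries in $e_{\epsilon,-}Ae_{\epsilon,+}$, which has no counterpart in the triangular algebra $A_{\epsilon}$, so the endomorphism rings of $T$ and $U$ in the respective homotopy categories are not obviously isomorphic. The claim is nevertheless true, and provable by your own mechanism: reducing entries modulo $J_{\pm}$ gives a surjection from the chain endomorphism ring of $T$ onto that of $U$, because the chain condition $g^0f=fg^{-1}$ depends only on the classes of $g^0$ and $g^{-1}$ modulo $J_{\pm}$ (again by the annihilation property); its kernel has entries in $J_{\pm}\subseteq\mathrm{rad}$ and hence lies in the radical, and neither $T$ nor $U$ has contractible direct summands since $\add T^0\cap\add T^{-1}=0$ and likewise for $U$. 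Idempotent lifting along this radical surjection then matches the direct-sum decompositions on both sides, giving $|T|=|U|$. (A smaller point: your intermediate bijection should be stated for two-term complexes with $T^0\in\add(e_{\epsilon,+}A)$ and $T^{-1}\in\add(e_{\epsilon,-}A)$, not for all two-term complexes whose $g$-vector lies in $\mathbb{Z}^n_{\epsilon}$; by Proposition \ref{prop:signcoh} this is exactly the class you actually use, and it mirrors the paper's own phrasing.) With these points made explicit, your bijection, the preservation of $g$-vectors, and the finiteness conclusion via Proposition \ref{prop:sgndecomp} all stand.
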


One may use Example \ref{example-involution} to understand the above isomorphism.

\begin{example}
Recall that
\[
A=
\mathbb{F}
\left(
\xymatrix{
1\ar@<0.5ex>[r]^{\alpha}
&
2\ar@<0.5ex>[l]^{\beta}
}
\right)
/\langle\alpha\beta,\beta\alpha\rangle
\]
and the set of $g$-vectors of two-term silting complexes is given in Example \ref{example-involution}.
We observe that the algebras $A_{\epsilon}$ for $\epsilon\in \mathsf{M}(2)=\{(+,+),(+,-),(-,+),(-,-)\}$ are given by
\[
A_{(+,+)}
=
\mathbb{F}(1\quad 2),
\quad
A_{(+,-)}
=
\mathbb{F}(1\rightarrow 2),
\quad
A_{(-,+)}
=
\mathbb{F}(1\leftarrow 2),
\quad
A_{(-,-)}
=
\mathbb{F}(1\quad 2).
\]
The corresponding subsets $\{g(T)\mid T\in \twosiltep A_{\epsilon}\}$ are given by
\begin{center}
\begin{tabular}{cccc}
\begin{tikzpicture}
  \draw[->] (-1,0)--(1,0) node[below right] {$x_1$};
  \draw[->] (0,-1)--(0,1.1) node[left] {$x_2$};
  \fill[gray!35] (0,0) rectangle (0.9,0.9);
  \draw (0.2,-1.4) node {$\mathbb{Z}_{(+,+)}^2$};
  \node at (1/3,1/3) {$\bullet$};
\end{tikzpicture}
&
\begin{tikzpicture}
  \draw[->] (-1,0)--(1,0) node[below right] {$x_1$};
  \draw[->] (0,-1)--(0,1.1) node[left] {$x_2$};
  \fill[gray!35] (0,0) rectangle (0.9,-0.9);
  \draw (0.2,-1.4) node {$\mathbb{Z}_{(+,-)}^2$};
  \node at (2/3,-1/3) {$\bullet$};
  \node at (1/3,-2/3) {$\bullet$};
\end{tikzpicture}
&
\begin{tikzpicture}
  \draw[->] (-1,0)--(1,0) node[below right] {$x_1$};
  \draw[->] (0,-1)--(0,1.1) node[left] {$x_2$};
  \fill[gray!35] (0,0) rectangle (-0.9,0.9);
  \draw (0.2,-1.4) node {$\mathbb{Z}_{(-,+)}^2$};
  \node at (-2/3,1/3) {$\bullet$};
  \node at (-1/3,2/3) {$\bullet$};
\end{tikzpicture}
&
\begin{tikzpicture}
  \draw[->] (-1,0)--(1,0) node[below right] {$x_1$};
  \draw[->] (0,-1)--(0,1.1) node[left] {$x_2$};
  \fill[gray!35] (0,0) rectangle (-0.9,-0.9);
  \draw (0.2,-1.4) node {$\mathbb{Z}_{(-,-)}^2$};
  \node at (-1/3,-1/3) {$\bullet$};
\end{tikzpicture}
\end{tabular}.
\end{center}
\end{example}

We explain that sign decomposition is compatible with the tilting mutation of $A$ regarded as a tilting complex concentrated in degree $0$.
We fix an integer $j\in [1,n]$ and denote by $T:=\mu^-_{P_j}(A)$ the left silting mutation of $A$ with respect to $(0\rightarrow P_j)$ for the indecomposable projective $A$-module $P_j$.
We observe that $T$ is of the form $\bigoplus_{i=1}^n T_i$ with $T_i:=(0\rightarrow P_i)$ for $i\neq j$ and $T_j=(T_j^{-1}\to T_j^0)$ is an indecomposable two-term complex in $\Kb(\proj A)$ such that $\add(P_j)=\add(T_j^{-1})$.

Suppose that $T$ is a tilting complex.
By Rickard's theorem \cite{R-derived}, the endomorphism algebra $\mathrm{End}_{\Kb(\proj A)}(T)$, written as $\mu_j^-(A)$, is derived equivalent to $A$.
More precisely, we have a triangle equivalence
\begin{equation}
\label{eq:trieq}
F\colon
\Db(\mod A)
\overset{\sim}{\longrightarrow}
\Db(\mod \mu_j^-(A))
\end{equation}
mapping $T\mapsto \mu_j^-(A)$ so that each indecomposable direct summand $T_i$ of $T$ is sent to the corresponding indecomposable projective $\mu_j^-(A)$-module $P'_i$.
Besides, it naturally induces an isomorphism
\[
K_0(\Kb(\proj A))
\overset{\sim}{\longrightarrow}
K_0(\Kb(\proj \mu_j^-(A)))
\]
of Grothendieck groups by $[T_i]\mapsto [P_i']$ for all $1\leqslant i\leqslant n$.
In this way, we can identify the vertex set of the quiver of $\mu_j^-(A)$ with $[1,n]$, so that $\mathsf{M}(n)$ is compatible between $A$ and $\mu_j^-(A)$.

Let $\mathsf{M}(n)_{j,-}$, respectively $\mathsf{M}(n)_{j,+}$, be the subset of $\mathsf{M}(n)$ consisting of all maps $\epsilon$ satisfying $\epsilon(j)=-$, respectively $\epsilon(j)=+$.
We have the following statement.

\begin{proposition}
\label{prop:sd-irrtilting}
The triangle equivalence in \eqref{eq:trieq} induces a bijection
\begin{equation}
\label{eq:posetmutation}
\twosiltno_{\mathsf{M}(n)_{j,-}} A
\overset{1-1}{\longleftrightarrow}
\twosiltno_{\mathsf{M}(n)_{j,+}} \mu_j^-(A).
\end{equation}
\end{proposition}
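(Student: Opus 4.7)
The plan is to track the action of the triangle equivalence $F$ on $K_0$, and then to check that $F$ preserves the two-term property when restricted to the appropriate sign pattern. Since $F$ restricts to a bijection between silting complexes of $\Kb(\proj A)$ and those of $\Kb(\proj \mu_j^-(A))$, the whole task is to see that two-term silting complexes with $g_j<0$ are sent to two-term silting complexes with $g_j>0$, and vice versa.

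First, I would pin down the induced linear map on Grothendieck groups. Since $T_i=P_i$ in $\Kb(\proj A)$ for all $i\neq j$ while $T_j$ is the two-term complex $(P_j\xrightarrow{\pi} T_j^0)$ with $T_j^0=\bigoplus_{i\neq j}P_i^{a_i}$ for some $a_i\geqslant 0$, the relations $[P_i]=[T_i]$ for $i\neq j$ and $[P_j]=\sum_{i\neq j}a_i[T_i]-[T_j]$ hold in $K_0(\Kb(\proj A))$. Combining with $[T_i]\mapsto [P'_i]$, one finds that for any $S\in\twosilt A$ with $g(S)=(g_1,\ldots,g_n)$, the class $[F(S)]\in K_0(\Kb(\proj \mu_j^-(A)))$ expressed in the basis $\{[P'_i]\}$ has $j$-th coordinate $-g_j$ and $i$-th coordinate $g_i+a_ig_j$ for $i\neq j$. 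In particular the sign at position $j$ is always flipped.

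Next, I would verify that when $g_j(S)<0$, the image $F(S)$ actually belongs to $\twosilt \mu_j^-(A)$, not merely to $\mathsf{silt}\,\Kb(\proj \mu_j^-(A))$. Writing $S=(S^{-1}\to S^0)$ with $S^{-1}=P_j^m\oplus N$ and $N,S^0\in\add(\bigoplus_{i\neq j}P_i)$, applying $F$ to the canonical triangle $S^0\to S\to S^{-1}[1]\to S^0[1]$ reduces the question to controlling $F(P_j)$. From the mutation triangle $P_j\to T_j^0\to T_j\to P_j[1]$, together with $F(T_j)=P'_j$ and $F(T_j^0)=\bigoplus_{i\neq j}(P'_i)^{a_i}$, one reads off that $F(P_j)$ is isomorphic in $\Kb(\proj \mu_j^-(A))$ to the two-term complex $(\bigoplus_{i\neq j}(P'_i)^{a_i}\to P'_j)$ placed in degrees $0$ and $1$. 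Hence $F(P_j)[1]$ is two-term in degrees $-1$ and $0$, and assembling the pieces shows that $F(S)$ itself is isomorphic to a two-term complex. Since $F$ preserves the silting property, $F(S)\in\twosilt \mu_j^-(A)$, and Proposition \ref{prop:signcoh} then forces every coordinate of $g(F(S))$ to be nonzero; combined with the formula from the previous paragraph, this gives $F(S)\in \twosiltno_{\mathsf{M}(n)_{j,+}}\mu_j^-(A)$.

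Finally, the inverse direction is handled symmetrically: the quasi-inverse of $F$ is the tilting equivalence associated to the right mutation $\mu_j^+(\mu_j^-(A))=A$, and the same amplitude argument shows that $F^{-1}$ sends $\twosiltno_{\mathsf{M}(n)_{j,+}}\mu_j^-(A)$ into $\twosiltno_{\mathsf{M}(n)_{j,-}}A$. Together with the injectivity of $F$ on silting complexes, this yields the desired bijection (\ref{eq:posetmutation}). The main obstacle is the amplitude check in the third paragraph: one must be careful that, although $F(P_j)$ spills out of the two-term window by itself, the shift $P_j[1]$ inside $S$ exactly compensates this, so that the total image $F(S)$ remains two-term. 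Once the structure of $F(P_j)$ is pinned down from the mutation triangle, the remainder is essentially bookkeeping on $g$-vectors.
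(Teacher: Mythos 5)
Your proof is correct, but it takes a genuinely different route from the paper. The paper argues order-theoretically: it first identifies (following the argument of \cite[Lemma 4.5]{AMN}) the sets $\twosiltno_{\mathsf{M}(n)_{j,-}} A$ and $\twosiltno_{\mathsf{M}(n)_{j,+}} A$ with the intervals $\{U \mid A[1]\leqslant U\leqslant \mu^-_{P_j}(A)\}$ and $\{V \mid \mu^+_{P_j[1]}(A[1])\leqslant V\leqslant A\}$ in the poset $\twosilt A$, and then uses that the triangle equivalence $F$ sends $\mu^-_{P_j}(A)\mapsto \mu_j^-(A)$ and $A[1]\mapsto \mu^+_{P'_j[1]}(\mu_j^-(A)[1])$ while preserving the partial order and mutation, so that one interval is carried isomorphically onto the other. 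You instead compute the induced map on $K_0$ explicitly (so that the $j$-th $g$-vector coordinate flips sign and the others change by $g_i+a_ig_j$), and verify by a cone/amplitude analysis---using the mutation triangle to pin down $F(P_j)$ as a complex in degrees $0,1$, whose shift lands back in the two-term window---that $F$ carries two-term silting complexes with $g_j<0$ to two-term silting complexes, then invoke Proposition \ref{prop:signcoh} to place the image in the correct orthants, and run the quasi-inverse symmetrically. Both arguments are sound; your computation I checked (the cocone of $F(S^{-1})[1]\to F(S^0)[1]$ indeed has projective terms only in degrees $-1,0$, and similarly for $F^{-1}$, where the spill now comes from $F^{-1}(P'_j)\simeq T_j$ sitting in degrees $-1,0$). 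The paper's approach buys slightly more with less computation---it exhibits the bijection as an isomorphism of intervals in the silting poset, hence compatible with the Hasse quivers---whereas yours is more self-contained (it does not rely on the interval characterizations, which the paper only sketches by reference) and yields the explicit transformation rule for $g$-vectors. Two small points of hygiene: the phrase that the quasi-inverse of $F$ ``is the tilting equivalence associated to the right mutation'' is not needed and is slightly loose; all your argument uses is $F^{-1}(P'_i)\simeq T_i$, which follows directly from $F(T_i)\simeq P'_i$. Also, you implicitly identify the $g$-vector of a two-term complex with its class in $K_0(\Kb(\proj\,-))$ in the projective basis; this is harmless but worth one sentence, since the membership in $\twosiltno_{\mathsf{M}(n)_{j,+}}\mu_j^-(A)$ is read off from that identification.
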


We point out that this bijection has been shown in \cite[Lemma 4.6]{AMN} for a class of algebras named \emph{Brauer tree algebras}, and the proof can be directly generalized to any finite-dimensional algebra $A$.
For the convenience of readers, we give a proof here.

\begin{proof}[Proof of Proposition \ref{prop:sd-irrtilting}]
Recall that $\mu_{P_j}^-(A)$, respectively $\mu_{P_j}^+(A)$, denotes the left, respectively right, silting mutation of $A$ with respect to $(0\rightarrow P_j)$.
Similar to \cite[Lemma 4.5]{AMN}, one can show that
\begin{equation}
\label{eq:mu1}
\twosiltno_{\mathsf{M}(n)_{j,-}} A
=
\{U\in \twosilt A
\mid
A[1]\leqslant U\leqslant \mu_{P_j}^-(A)\},
\end{equation}
and
\begin{equation}
\label{eq:mu2}
\twosiltno_{\mathsf{M}(n)_{j,+}} A
=
\{V\in \twosilt A
\mid
\mu_{P_j[1]}^+(A[1])\leqslant V\leqslant A\},
\end{equation}
with respect to the partial order $\leqslant$ on $\twosilt A$.

Suppose that $\mu_{P_j}^-(A)$ is tilting and $\mu_j^-(A)=\mathrm{End}_{\Kb(\proj A)}(\mu_{P_j}^-(A))$ as defined before.
We denote by $P'_i$ $(1\leqslant i\leqslant n)$ the indecomposable projective modules of $\mu_j^-(A)$.
Then, the triangle equivalence $F$ in \eqref{eq:trieq} satisfies
\[
F(\mu_{P_j}^-(A)) \simeq \mu_j^-(A)
\quad
\text{and}
\quad
F(A[1]) \simeq \mu_{P'_j[1]}^+(\mu_j^-(A)[1]).
\]
Since the triangle equivalence $F$ preserves $\leqslant$ and mutations, it gives an isomorphism
\begin{center}
$\{U\in \twosilt A \mid A[1] \leqslant U \leqslant \mu_{P_j}^- (A) \}$
\end{center}
\begin{center}
$\overset{\sim}{\longrightarrow}
\{V\in \twosilt \mu_j^-(A) \mid \mu_{P'_j[1]}^+ (\mu_{j}^-(A)[1]) \leqslant V \leqslant \mu_j^- (A)\}$.
\end{center}
We then get the desired bijection by \eqref{eq:mu1} and \eqref{eq:mu2}.
\end{proof}

The above result can be generalized as follows.
Let $J$ be a subset of $[1,n]$ and $e_J:=\sum_{j\in J} e_j$.
We denote by $T:=\mu_{P_J}^-(A)$ the left silting mutation of $A$ with respect to $(0\rightarrow P_J)$ for the projective $A$-module $P_J:=e_JA$.
By the definition of silting mutation, $T$ is of the form $\bigoplus_{i=1}^n T_i$, where $T_i:=(0\rightarrow P_i)$ for all $i\notin J$ and $T_i=(T_i^{-1}\to T_i^0)$ is an indecomposable two-term complex such that $\add(P_i)=\add(T_i^{-1})$ for every $i\in J$.

We assume that $T$ is tilting.
We denote by $\mu_J^-(A):=\mathrm{End}_{\Kb(\proj A)}(T)$ the endomorphism algebra of $T$, and by $P'_i$ the indecomposable projective $\mu_J^-(A)$-module corresponding to $T_i$ for all $1\leqslant i\leqslant n$.
Then, we can naturally identify the vertex set of the quiver of $\mu_J^-(A)$ with $[1,n]$, so that $\mathsf{M}(n)$ is compatible between $A$ and $\mu_J^-(A)$.

Let $\mathsf{M}(n)_{J,-}$, respectively $\mathsf{M}(n)_{J,+}$, be the subset of $\mathsf{M}(n)$ consisting of all maps $\epsilon$ satisfying $\epsilon(J)=\{-\}$, respectively $\epsilon(J)=\{+\}$.

\begin{corollary}
\label{prop:sd-tilting}
Under the above setting, we have a bijection
\begin{equation}
\label{eq:posetmutationJ}
\twosiltno_{\mathsf{M}(n)_{J,-}} A
\overset{1-1}{\longleftrightarrow}
\twosiltno_{\mathsf{M}(n)_{J,+}} \mu_J^-(A).
\end{equation}
In particular, the above sets are finite if one of $A$ and $\mu_J^-(A)$ is $\tau$-tilting finite.
\end{corollary}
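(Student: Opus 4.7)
The plan is to adapt the proof of Proposition \ref{prop:sd-irrtilting} essentially verbatim, replacing the single vertex $j$ by the subset $J$. The crux is to generalize the interval descriptions (\ref{eq:mu1}) and (\ref{eq:mu2}) to
\begin{align*}
\twosiltno_{\mathsf{M}(n)_{J,-}} A &= \{U \in \twosilt A \mid A[1] \leqslant U \leqslant \mu_{P_J}^-(A)\}, \\
\twosiltno_{\mathsf{M}(n)_{J,+}} A &= \{V \in \twosilt A \mid \mu_{P_J[1]}^+(A[1]) \leqslant V \leqslant A\}.
\end{align*}
Setting $Y := \bigoplus_{i \notin J} P_i$, the construction of silting mutation gives $\mu_{P_J}^-(A) \simeq (P_J \to Z) \oplus Y$ for some $Z \in \add(Y)$, so its $g$-vector has the value $-1$ in every coordinate indexed by $J$ and positive entries elsewhere; in particular $\mu_{P_J}^-(A) \in \twosiltno_{\mathsf{M}(n)_{J,-}} A$. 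For the inclusion $\supseteq$ I would track the $P_j$-summands ($j \in J$) through the silting mutation triangles, invoking Proposition \ref{prop:signcoh} to conclude that $g(U)_j < 0$ for all $j \in J$. For $\subseteq$, I would verify the Hom-vanishing characterization $\Hom_{\Kb(\proj A)}(\mu_{P_J}^-(A), U[1]) = 0$ summand-by-summand using the explicit form of $\mu_{P_J}^-(A)$. The second identity is obtained dually.

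Granted these interval descriptions, I would then apply the triangle equivalence $F$ of (\ref{eq:trieq}). By construction $F(\mu_{P_J}^-(A)) \simeq \mu_J^-(A)$, and since $A \simeq \mu_{T_J}^+(\mu_{P_J}^-(A))$ where $T_J = \bigoplus_{j \in J} T_j$ denotes the $J$-part of $\mu_{P_J}^-(A)$, shifting and applying $F$ yields $F(A[1]) \simeq \mu_{P'_J[1]}^+(\mu_J^-(A)[1])$. Since $F$ preserves both the partial order $\leqslant$ and silting mutations, it restricts to a poset isomorphism between $[A[1], \mu_{P_J}^-(A)]$ in $\twosilt A$ and $[\mu_{P'_J[1]}^+(\mu_J^-(A)[1]), \mu_J^-(A)]$ in $\twosilt \mu_J^-(A)$. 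Combining this with the interval descriptions applied on each side yields the bijection (\ref{eq:posetmutationJ}). The finiteness statement is then immediate: by Theorem \ref{tau-tiltiing-silting}, $\tau$-tilting finiteness of either $A$ or $\mu_J^-(A)$ makes the corresponding set $\twosilt$ finite, and the bijection transfers this to the relevant subset on the other side.

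The main obstacle I anticipate is the subset version of the first interval description. When $J$ is a single vertex, the argument of \cite[Lemma~4.5]{AMN} reads off the sign of $g(U)_j$ directly from the mutation triangle at $j$; for a general $J$, one must simultaneously control all $|J|$ coordinates through the more intricate approximation triangle $P_J \to Z \to X' \to P_J[1]$ and verify that the full sign pattern $\epsilon(J) = \{-\}$ is preserved throughout the downward interval in $\twosilt A$. Once this combinatorial step is in place, the remaining argument is a formal application of Rickard's theorem and of the poset- and mutation-preserving properties of $F$.
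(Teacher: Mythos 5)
Your proposal is correct and follows exactly the route the paper intends: the paper's own proof of this corollary simply states that it is obtained by repeating the argument of Proposition \ref{prop:sd-irrtilting} with the subset $J$ in place of the single vertex $j$, which is precisely what you do (generalizing the interval descriptions (\ref{eq:mu1})--(\ref{eq:mu2}) to $J$ and transporting them through the order- and mutation-preserving equivalence $F$). Your write-up in fact supplies more detail than the paper, which omits the verification entirely.
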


\begin{proof}
Note that Proposition \ref{prop:sd-irrtilting} is precisely the case when $J=\{j\}$, and a proof of this statement can be done in a similar way.
We omit the details.
\end{proof}

At the end of this subsection, it is better to give an example.

\begin{example}
Let
\[
A=
\mathbb{F}
\left(
\xymatrix@C=0.7cm{
1
&
\ar[l]_{\alpha} 2 \ar[r]^{\beta}
&
3
}
\right)
\]
be a path algebra.
We denote by $P_i$ the indecomposable projective $A$-modules corresponding to the vertices $i=1,2,3$.
If we choose $J=\{1,3\}$, then $P_J=P_1\oplus P_3$.
The left silting mutation $T:=\mu_{P_J}^-(A)$ of $A$ with respect to $(0\rightarrow P_J)$ is given by $T=T_1\oplus T_2\oplus T_3$, where
\[
T_1:=(P_1\overset{\alpha\cdot}{\longrightarrow} P_2),
\quad
T_2:=(0\longrightarrow P_2),
\quad
T_3:=(P_3\overset{\beta\cdot}{\longrightarrow} P_2).
\]
It is not difficult to check that $T$ is a tilting complex.
In addition, the endomorphism algebra of $T$, written as $\mu_J^-(A)$, is
\vspace{-0.4cm}
\[
\begin{pmatrix}
\mathbb{F} & \mathbb{F} & 0 \\
0 & \mathbb{F} & 0 \\
0 & \mathbb{F} & \mathbb{F}
\end{pmatrix},
\]
and it is isomorphic to the path algebra
\[
\mathbb{F}
\left(
\xymatrix@C=0.7cm{
1 \ar[r]
&
2
&
\ar[l] 3
}
\right).
\]
We describe the bijection in Corollary \ref{prop:sd-tilting} between the Hasse quivers as follows:
\begin{center}
\begin{tabular}{cccc}
\begin{tikzpicture}
  \node (1) at (0,7.25) {$\circ$};
  \node (2) at (-1.5,6.5) {$\circ$};
  \node (3) at (0,6.5) {$\circ$};
  \node (4) at (1.5,6.5) {$\circ$};
  \node (5) at (-1.5,5.25) {$\circ$};
  \node (6) at (1.5,5.25) {$\circ$};
  \node (7) at (0,5.5) {$\bullet$};
  \node (8) at (0,4.75) {$\bullet$};
  \node (9) at (-0.75,4.25) {$\bullet$};
  \node (10) at (0.75,4.25) {$\bullet$};
  \node (11) at (-1.5,3.75) {$\circ$};
  \node (12) at (0,3.75) {$\bullet$};
  \node (13) at (1.5,3.75) {$\circ$};
  \node (14) at (0,3) {$\bullet$};

  \draw[->] (1)--(2);
  \draw[->] (1)--(3);
  \draw[->] (1)--(4);
  \draw[->] (2)--(5);
  \draw[->] (2)--(7);
  \draw[->] (3)--(11);
  \draw[->] (3)--(13);
  \draw[->] (4)--(7);
  \draw[->] (4)--(6);
  \draw[->] (5)--(11);
  \draw[->] (5)--(9);
  \draw[->] (6)--(10);
  \draw[->] (6)--(13);
  \draw[->,thick] (7)--(8);
  \draw[->,thick] (8)--(9);
  \draw[->,thick] (8)--(10);
  \draw[->,thick] (9)--(12);
  \draw[->,thick] (10)--(12);
  \draw[->] (11)--(14);
  \draw[->,thick] (12)--(14);
  \draw[->] (13)--(14);
\end{tikzpicture}
& & &
\begin{tikzpicture}
  \node (1) at (0,-7.25) {$\circ$};
  \node (2) at (-1.5,-6.5) {$\circ$};
  \node (3) at (0,-6.5) {$\circ$};
  \node (4) at (1.5,-6.5) {$\circ$};
  \node (5) at (-1.5,-5.25) {$\circ$};
  \node (6) at (1.5,-5.25) {$\circ$};
  \node (7) at (0,-5.5) {$\bullet$};
  \node (8) at (0,-4.75) {$\bullet$};
  \node (9) at (-0.75,-4.25) {$\bullet$};
  \node (10) at (0.75,-4.25) {$\bullet$};
  \node (11) at (-1.5,-3.75) {$\circ$};
  \node (12) at (0,-3.75) {$\bullet$};
  \node (13) at (1.5,-3.75) {$\circ$};
  \node (14) at (0,-3) {$\bullet$};

  \draw[<-] (1)--(2);
  \draw[<-] (1)--(3);
  \draw[<-] (1)--(4);
  \draw[<-] (2)--(5);
  \draw[<-] (2)--(7);
  \draw[<-] (3)--(11);
  \draw[<-] (3)--(13);
  \draw[<-] (4)--(7);
  \draw[<-] (4)--(6);
  \draw[<-] (5)--(11);
  \draw[<-] (5)--(9);
  \draw[<-] (6)--(10);
  \draw[<-] (6)--(13);
  \draw[<-,thick] (7)--(8);
  \draw[<-,thick] (8)--(9);
  \draw[<-,thick] (8)--(10);
  \draw[<-,thick] (9)--(12);
  \draw[<-,thick] (10)--(12);
  \draw[<-] (11)--(14);
  \draw[<-,thick] (12)--(14);
  \draw[<-] (13)--(14);
\end{tikzpicture}
\\
$\mathcal{H}(\twosilt A)$
& & &
$\mathcal{H}(\twosilt \mu_J^-(A))$
\end{tabular}.
\end{center}
where the bullets indicate elements in $\twosiltno_{\mathsf{M}(n)_{J,-}} A$ and $\twosiltno_{\mathsf{M}(n)_{J,+}} \mu_J^-(A)$, respectively.
\end{example}

\subsection{Schur algebras}

In this subsection, we review some basics related to the representation theory of symmetric groups and Schur algebras.
One may refer to some textbooks, such as \cite{James-symmetric}, \cite{Martin-schur alg} and \cite{Sagan-symmetric}, for more details.

Let $r$ be a natural number and $\lambda=(\lambda_1,\lambda_2,\ldots)$ a sequence of non-negative integers.
We call $\lambda$ a \emph{partition} of $r$ if $\lambda_1+\lambda_2+\cdots=r$ with $\lambda_1\geqslant \lambda_2\geqslant \cdots \geqslant 0$, and the elements $\lambda_i$ are called \emph{parts} of $\lambda$.
If there exists an $n\in \mathbb{N}$ such that $\lambda_i=0$ for all $i>n$, we denote $\lambda$ by $(\lambda_1,\lambda_2,\ldots,\lambda_n)$ and call it a partition of $r$ with at most $n$ parts.
We denote by $\Omega(n,r)$ the set of all partitions of $r$ with at most $n$ parts.
It is well-known that $\Omega(n,r)$ admits the dominance order $\triangleright$ and the lexicographic order $>$; we omit the definitions.

We denote by $G_r$ the symmetric group on $r$ symbols and by $\mathbb{F}G_r$ the group algebra of $G_r$.
Each partition $\lambda=(\lambda_1,\lambda_2,\ldots,\lambda_n)$ of $r$ gives a Young subgroup $G_\lambda$ of $G_r$ defined as
\[
G_\lambda
:=
G_{\lambda_1}\times G_{\lambda_2}\times \cdots \times G_{\lambda_n}.
\]
Then the permutation $\mathbb{F}G_r$-module $M^\lambda$ is $1_{G_\lambda}\uparrow^{G_r}$, where $1_{G_\lambda}$ denotes the trivial module for $G_\lambda$ and $\uparrow$ denotes induction.
It is known that the permutation module $M^\lambda$ has a unique submodule which is isomorphic to the so-called \emph{Specht module} $S^\lambda$.
Furthermore, there is a unique indecomposable direct summand of $M^\lambda$ containing $S^\lambda$, which is called \emph{Young module} and is denoted by $Y^\lambda$.
In this way, each $M^\lambda$ with $\lambda\in \Omega(n,r)$ is a direct sum of Young modules $Y^\lambda$ with $\lambda\in \Omega(n,r)$.

Let $S(n,r)$ be the Schur algebra over an algebraically closed field $\mathbb{F}$ of characteristic $p>0$.
Since each $M^\lambda$ with $\lambda\in \Omega(n,r)$ can be regarded as a direct summand of $V^{\otimes r}$, for example, see \cite[Section 1.6]{Ve-doc-thesis}, we may construct the basic algebra $\overline{S(n,r)}$ of $S(n,r)$ as follows.
Let $B$ be a block of the group algebra $\mathbb{F}G_r$ labeled by a $p$-core $\omega$.
It is well-known that a partition $\lambda$ belongs to $B$ if and only if $\lambda$ has the same $p$-core $\omega$.
We define
\[
S_B
:=
\mathrm{End}_{\mathbb{F}G_r}
\left(
\bigoplus_{\lambda\in B\cap\Omega(n,r)} Y^\lambda
\right).
\]
Then, the basic algebra $\overline{S(n,r)}$ is given by $\bigoplus S_B$ taken over all blocks of $\mathbb{F}G_r$.
Moreover, $S_B$ is a direct sum of blocks of $\overline{S(n,r)}$.

We recall some constructions on the blocks of $S(2,r)$.
In order to avoid confusion of symbols, we use $\mathcal{B}$ to identify a block of $S(2,r)$ and we denote by $\lvert\mathcal{B}\rvert$ the number of simple $\mathcal{B}$-modules.

\begin{lemma}[{\cite[Theorem 13]{EH-two-blocks}}]
\label{EH-blocks-s(2,r)}
Let $\mathcal{B}$ and $\mathcal{B}'$ be two blocks of $S(2,r)$ and $S(2,r')$ over the same field.
If $\lvert\mathcal{B}\rvert=\lvert\mathcal{B}'\rvert$, then $\mathcal{B}$ and $\mathcal{B}'$ are Morita equivalent.
\end{lemma}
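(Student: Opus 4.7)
The plan is to exhibit a basic algebra associated to each block of $S(2,r)$ whose isomorphism type depends only on the number of simple modules $|\mathcal{B}|$, and then conclude that any two blocks with the same $|\mathcal{B}|$ are Morita equivalent. Concretely, since $\overline{S(2,r)}=\bigoplus_B S_B$ is built from endomorphism rings of sums of Young modules, the strategy reduces to presenting the endomorphism ring $\mathrm{End}_{\mathbb{F}G_r}\bigl(\bigoplus_{\lambda\in\mathcal{B}\cap\Omega(2,r)} Y^\lambda\bigr)$ by quiver and relations in a way that depends only on $|\mathcal{B}|$.

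First, I would parameterize the simples in an indecomposable block $\mathcal{B}\subseteq S(2,r)$. Simple $S(2,r)$-modules correspond to partitions $\lambda=(r-k,k)$ with $0\leqslant k\leqslant\lfloor r/2\rfloor$, and the Nakayama-type block rule (same $p$-core) organizes those appearing in $\mathcal{B}$ as a chain $\lambda^{(1)}\triangleleft\lambda^{(2)}\triangleleft\cdots\triangleleft\lambda^{(m)}$ where $m=|\mathcal{B}|$. The successive differences $\lambda^{(i+1)}-\lambda^{(i)}$ are governed by the combinatorial pattern of $p$-reflections on the second part, which is uniform in $r$ once $m$ is fixed.

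Second, I would compute the Ext-quiver. For $GL_2$ (equivalently, for the Schur algebra in rank $2$) the Ext theory between simples in a single block is very restrictive: $\mathrm{Ext}^1$ between the simples at positions $i$ and $j$ of the chain is nonzero precisely when $|i-j|=1$, and then one-dimensional on each side. This yields the linear bi-oriented quiver
\begin{center}
$\xymatrix@C=0.7cm{1\ar@<0.5ex>[r] & 2\ar@<0.5ex>[l]\ar@<0.5ex>[r] & \cdots\ar@<0.5ex>[l]\ar@<0.5ex>[r] & m\ar@<0.5ex>[l]}$
\end{center}
whose shape manifestly depends only on $m$.

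Third, I would read off the relations from the composition series of the Young modules $Y^{\lambda^{(i)}}$, which in rank $2$ admit a uniform description: the socle/radical filtration and the internal Hom spaces depend only on the position $i$ in the chain, not on $r$. Consequently the basic algebra $S_{\mathcal{B}}$ admits a presentation whose relations involve only alternating products of the arrows $\alpha_i,\beta_i$, and these relations coincide for any two blocks with the same $m$. The cleanest way to argue uniformity is via translation functors (or tensoring with a power of $\det$) that yield Morita equivalences between corresponding blocks of $S(2,r)$ and $S(2,r')$ whenever the chain lengths agree; this transports the quiver presentation between the two settings verbatim.

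The main obstacle is precisely this uniformity: one must rule out that some relations "degenerate" near the ends of the chain for small $r$ or for the principal block, and one must make sure the translation functors land in a block of $S(2,r')$ rather than lose a projective summand. Both issues can be handled by using that for $S(2,r)$ the Weyl modules have length at most $2$ and that the $p$-adic expansion of the top of the chain agrees across $r$ and $r'$ once $m$ is matched, so the local pattern of relations is forced to be the same.
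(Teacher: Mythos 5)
Note first that the paper does not prove this lemma: it is imported verbatim from Erdmann--Henke \cite[Theorem 13]{EH-two-blocks} and used as a black box, so your sketch has to stand on its own merits, and as written it does not.

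The decisive gap is your second step. It is false that $\mathrm{Ext}^1$ between the simples of an indecomposable block of $S(2,r)$ is supported exactly on neighbouring pairs of the dominance chain, and hence false that the Gabriel quiver of a block is the linearly ordered doubled quiver of type $A_m$ ``depending only on $m$''. The paper's own data contradict this: by Lemma \ref{EH-quiver-s(2,r)} and Table \ref{quiver-S(2,r)}, the block of $S(2,p^2)$ with $p+1$ simples ($p\geqslant 3$) is Morita equivalent to $\mathcal{D}_{p+1}$, whose quiver has a branch vertex (vertex $3$ is joined to $1$, $2$ and $4$); for $p=2$ one has $\overline{S(2,8)}\simeq\mathcal{L}_5$, again with a branch vertex; and the principal blocks of $S(2,10)$ for $p=2$ and of $S(2,12)$ for $p=3$ have quivers containing a four-cycle (see the proof of Theorem \ref{thm:S_2r}), which is incompatible with any ``path'' pattern. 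So the quiver shape you posit is wrong precisely in the range where the lemma has real content, and the relations are correspondingly more complicated than alternating products of arrows (compare the presentations of $\mathcal{D}_m$, $\mathcal{K}_4$, $\mathcal{L}_5$ in Section \ref{subsection-all algebras}). Your third step has two further problems: the claim that Weyl modules of $S(2,r)$ have length at most two fails once $r\geqslant p^2$, which is exactly where the non-representation-finite blocks occur; and the assertion that translation-type functors give Morita equivalences between blocks of $S(2,r)$ and $S(2,r')$ whenever the chain lengths agree is essentially the statement of the lemma itself, so invoking it is circular. To carry out a proof along these lines one must actually compute, uniformly in $r$, the quiver and relations of an arbitrary block in terms of $|\mathcal{B}|$ (this is in effect what Erdmann and Henke do, via the structure of Young modules and recursions as in Lemma \ref{EH-quiver-s(2,r)}), rather than postulating the answer to be of type $A_m$.
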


Based on the above lemma, it is useful to find the quiver of $\overline{S(2,r)}$.
Let $\lambda=(\lambda_1,\lambda_2)$ and $\mu=(\mu_1,\mu_2)$ be two partitions of $r$.
We define two non-negative integers $s:=\lambda_1-\lambda_2$ and $t:=\mu_1-\mu_2$.
We denote by $v^s$ the vertex in the quiver of $\overline{S(2,r)}$ corresponding to the Young module $Y^{(\lambda_1,\lambda_2)}$ with $s=\lambda_1-\lambda_2$.
Let $n(v^s,v^t)$ be the number of arrows from $v^s$ to $v^t$.
It is shown in \cite{EH-method} that $n(v^s,v^t)=n(v^t,v^s)$ and $n(v^s,v^t)$ is either $0$ or $1$.
We have the following recursive algorithm for computing $n(v^s,v^t)$.

\begin{lemma}[{\cite[Proposition 3.1]{EH-method}}]
\label{EH-quiver-s(2,r)}
Suppose that $p$ is a prime number and $s>t$.
Let $s=s_0+ps'$ and $t=t_0+pt'$ with $0\leqslant s_0,t_0\leqslant p-1$ and $s',t'\geqslant 0$.
\begin{description}[itemsep=-3pt]
  \item[(1)] If $p=2$, then
  \[
  n(v^s,v^t)
  =
  \begin{cases}
  n(v^{s'},v^{t'})
  &
  \text{if } s_0=t_0=1 \text{ or } s_0=t_0=0 \text{ and } s'\equiv t' \pmod{2},
  \\
  1
  &
  \text{if } s_0=t_0=0,\ t'+1=s'\not\equiv 0 \pmod{2},
  \\
  0
  &
  \text{otherwise.}
  \end{cases}
  \]

  \item[(2)] If $p>2$, then
  \[
  n(v^s,v^t)
  =
  \begin{cases}
  n(v^{s'},v^{t'})
  &
  \text{if } s_0=t_0,
  \\
  1
  &
  \text{if } s_0+t_0=p-2,\ t'+1=s'\not\equiv 0 \pmod{p},
  \\
  0
  &
  \text{otherwise.}
  \end{cases}
  \]
\end{description}
\end{lemma}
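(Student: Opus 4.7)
The plan is to interpret $n(v^s, v^t)$ as the dimension of an $\mathrm{Ext}^1$ between simple modules and then extract the recursion from Steinberg's tensor product theorem together with a K\"unneth-type formula for $\mathrm{Ext}$. First, since $\overline{S(2,r)}$ is the basic algebra Morita equivalent to $S(2,r)$, the count $n(v^s,v^t)$ equals $\dim_{\mathbb{F}} \mathrm{Ext}^1_{S(2,r)}(L(\lambda), L(\mu))$, where $\lambda=(\lambda_1,\lambda_2)$ and $\mu=(\mu_1,\mu_2)$ are the unique partitions of $r$ with $\lambda_1-\lambda_2=s$ and $\mu_1-\mu_2=t$, and $L(\lambda), L(\mu)$ are the simple tops of the corresponding Young modules. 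Because $\mathsf{mod}\,S(2,r)$ is equivalent to the category of $r$-homogeneous polynomial representations of $\mathrm{GL}_2(\mathbb{F})$, this $\mathrm{Ext}^1$ is to be computed inside the rational representation category of $\mathrm{GL}_2$, at fixed homogeneous degree $r$.

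Next, I would apply Steinberg's tensor product theorem. Writing $s=s_0+ps'$ and $t=t_0+pt'$ with $0\leqslant s_0,t_0<p$, the relevant simples decompose as $L(s)\simeq L(s_0)\otimes L(s')^{(1)}$ and $L(t)\simeq L(t_0)\otimes L(t')^{(1)}$, where $(1)$ denotes Frobenius twist. A Cline--Parshall--Scott style K\"unneth decomposition (obtained by passing to the first Frobenius kernel and using Lyndon--Hochschild--Serre) then yields a natural isomorphism
\[
\mathrm{Ext}^1\!\bigl(L(s),L(t)\bigr) \simeq \bigl[\mathrm{Hom}(L(s_0),L(t_0))\otimes \mathrm{Ext}^1(L(s'),L(t'))\bigr] \oplus \bigl[\mathrm{Ext}^1(L(s_0),L(t_0))\otimes \mathrm{Hom}(L(s'),L(t'))\bigr],
\]
using that Frobenius twist preserves both $\mathrm{Hom}$ and $\mathrm{Ext}^1$ on the algebraic group side. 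Since $\mathrm{Hom}(L(a),L(b))\neq 0$ iff $a=b$, and since $\mathrm{Ext}^1(L(a),L(a))=0$ for $0\leqslant a<p$ (simples in the lowest alcove are rigid), the case $s_0=t_0$ collapses the formula to $n(v^s,v^t)=n(v^{s'},v^{t'})$, while $s_0\neq t_0$ forces $s'=t'$ and reduces the count to the low-weight base case $\mathrm{Ext}^1(L(s_0),L(t_0))$. The latter, computed from the composition series of the Weyl module $V(s_0)$ for $\mathrm{SL}_2$, is one-dimensional exactly when $s_0+t_0=p-2$ with $s_0$ and $t_0$ adjacent under the dot action, and vanishes otherwise; this matches the ``$1$'' clauses of the lemma.

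The main obstacle is the bookkeeping forced by the fixed homogeneous degree $r$: partitions of $r$ with prescribed difference $s$ are unique, but the degree constraint $\lambda_1+\lambda_2=r$ interacts nontrivially with Frobenius twist, which multiplies total degree by $p$. This is what produces the extra ``$1$'' in the clause $s_0=t_0=0,\ t'+1=s',\ s'\not\equiv 0\pmod{p}$ — a determinant-twist contribution arising from the gap between $\lambda_2$ and $\mu_2$ — and, in the $p=2$ case, the parity condition $s'\equiv t'\pmod 2$ when $s_0=t_0=0$: there $L(s_0)=L(t_0)$ is the trivial module, so the Steinberg decomposition is degenerate and a direct comparison of degrees modulo $2$ is needed to match the underlying partitions. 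Consequently I would organize the proof as an induction on $r$, handling the base cases $s,t<p$ by a direct computation inside $S(2,r)$ using the filtration of Weyl modules, and propagating via the Steinberg formula above, with the $p=2$ parity clause as the most delicate point.
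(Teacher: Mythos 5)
The paper offers no proof of this lemma; it is quoted verbatim from Erdmann--Henke \cite[Proposition 3.1]{EH-method}, so your proposal is an attempt to reprove a cited classical result. Your overall strategy is the standard route to such digit-recursions: identify $n(v^s,v^t)$ with $\dim_{\mathbb{F}}\mathrm{Ext}^1$ between the corresponding simple modules (legitimate, since the degree-$r$ polynomial category is closed under extensions among rational $\mathrm{GL}_2$-modules, and within fixed degree $r$ all differences $s$ have the same parity as $r$, so everything reduces cleanly to $\mathrm{SL}_2$ with no determinant issues), then recurse via Steinberg's tensor product theorem and the Lyndon--Hochschild--Serre spectral sequence for the first Frobenius kernel $G_1$.

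However, the displayed ``K\"unneth'' isomorphism is wrong in its second summand, and the error is fatal to the second clause of the lemma. The relevant $E_2^{0,1}$-term is $\mathrm{Hom}_{G/G_1}\bigl(L(s')^{[1]},\,L(t')^{[1]}\otimes\mathrm{Ext}^1_{G_1}(L(s_0),L(t_0))\bigr)$, where $\mathrm{Ext}^1_{G_1}(L(s_0),L(t_0))$ is the Frobenius-kernel $\mathrm{Ext}$-group carrying its natural, generally nontrivial $G/G_1$-module structure: for $p>2$ it is isomorphic to $L(1)^{[1]}$ (two-dimensional) precisely when $s_0+t_0=p-2$, and it is not the trivial module $\mathrm{Ext}^1_{G}(L(s_0),L(t_0))$ --- the latter in fact vanishes for all restricted $s_0,t_0$ when $p>2$. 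Your formula replaces this term by $\mathrm{Ext}^1(L(s_0),L(t_0))\otimes\mathrm{Hom}(L(s'),L(t'))$, which forces $s'=t'$ and hence predicts $n(v^s,v^t)=0$ whenever $s_0\neq t_0$; the lemma instead demands $s'=t'+1\not\equiv 0 \bmod p$ in that case. Concretely, for $p=3$, $s=3$, $t=1$, the Weyl module $V(3)$ is a nonsplit extension of $L(3)=L(1)^{[1]}$ by $L(1)$, so $n(v^3,v^1)=1$ in agreement with the lemma, whereas your formula returns $0$. With the correct $E_2^{0,1}$-term the condition becomes $\mathrm{Hom}_G\bigl(L(s'),L(t')\otimes L(1)\bigr)\neq 0$, and analysing the socle of $L(t')\otimes L(1)$ yields exactly the clause $t'+1=s'\not\equiv 0\bmod p$. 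Finally, the extra clause for $p=2$, $s_0=t_0=0$ is caused by $H^1(G_1,\mathbb{F})\neq 0$ in characteristic $2$ (so that the inflation term and the $G_1$-cohomology term can both contribute, in disjoint subcases), not by a determinant twist; the degree bookkeeping you flag as the main obstacle is actually harmless for the reason noted above.
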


We end these preliminaries by recalling the useful classification for the representation type of Schur algebras.
In the following, some semi-simple cases are contained in the representation-finite cases.
We may distinguish the semi-simple cases following \cite{DN-semisimple}.
Namely, the Schur algebra $S(n,r)$ over a field $\mathbb{F}$ of characteristic $p>0$ is semi-simple if and only if $p>r$ or $p=2$, $n=2$, $r=3$.

\begin{proposition}[{\cite{Erdmann-finite, DEMN-tame schur}}]
\label{summary}
Let $p>0$ be the characteristic of $\mathbb{F}$.
Then, $S(n,r)$ is representation-finite if and only if $p=2$, $n=2$, $r=5,7$ or $p\geqslant 2$, $n=2$, $r<p^2$ or $p\geqslant 2$, $n\geqslant 3$, $r<2p$; 
$(\text{infinite-})$tame if and only if $p=2$, $n=2$, $r=4,9,11$ or $p=3$, $n=2$, $r=9,10,11$ or $p=3$, $n=3$, $r=7,8$.
Otherwise, $S(n,r)$ is wild.
\end{proposition}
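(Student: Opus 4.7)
The plan is to reduce the trichotomy (representation-finite / tame / wild) for $S(n,r)$ to the representation type of its indecomposable blocks, and then to treat the cases $n=2$ and $n\geqslant 3$ separately using the quiver-theoretic description of blocks together with known reductions preserving representation type. Since a direct sum of algebras is representation-finite (resp.\ tame) exactly when every summand is, and is wild as soon as one summand is wild, this block reduction is a legitimate first step once one knows the block decomposition of $\overline{S(n,r)}$ via $\bigoplus_B S_B$ indexed by $p$-cores of $\mathbb{F}G_r$.

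For $n=2$, the key input is Lemma \ref{EH-blocks-s(2,r)}: up to Morita equivalence, an indecomposable block $\mathcal{B}$ of $S(2,r)$ is determined by $|\mathcal{B}|$. I would therefore compute, using the recursive formula in Lemma \ref{EH-quiver-s(2,r)}, the quiver and defining relations for the unique Morita representative with $|\mathcal{B}|=m$, and then identify it with an explicit string/biserial or wild quiver algebra. The expected pattern is that small $m$ yield representation-finite algebras (matching $\mathcal{A}_m$, $\mathcal{D}_m$, $\mathcal{L}_5$ from subsection \ref{subsection-all algebras}), a narrow middle band gives tame algebras (recognised via the special biserial structure when $p=2$, and via deformed preprojective/tubular structures when $p=3$), and everything beyond contains, either as an idempotent truncation (Proposition \ref{quotient and idempotent}) or a quotient by central nilpotent ideals (Proposition \ref{center}), a wild subquiver such as the Kronecker-type configuration in Proposition \ref{infinite-square}. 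The arithmetic cutoff $r<p^2$ for finiteness, and the sporadic small lists in the tame row, should then fall out of a direct case analysis on $|\mathcal{B}|$ translated back to $r$ via the partition combinatorics of $\Omega(2,r)$.

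For $n\geqslant 3$, the strategy is to transfer information from symmetric groups via the Schur functor $\moduleCategory S(n,r)\to \moduleCategory \mathbb{F}G_r$ and, following Donkin, from $S(n,r)$ to $S(r,r)$ when $n\geqslant r$, so that only finitely many boundary cases need explicit treatment. For $r<2p$ one expects all blocks to be Morita equivalent to blocks already appearing for $n=2$, reducing to the preceding paragraph. For $r\geqslant 2p$ the generic expectation is wildness, proved by exhibiting a $\tau$-tilting wild quiver (e.g.\ a subquiver of Proposition \ref{infinite-square} type) as an idempotent truncation $eS(n,r)e$ for a carefully chosen sum of weight idempotents. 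The exceptional tame blocks at $(p,n,r)\in\{(3,3,7),(3,3,8)\}$ together with the $p=2$ sporadic cases would be identified by computing their basic algebras explicitly and matching them to known tame families (e.g.\ special biserial or clannish algebras, or blocks of type $D$).

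The hard step will not be the wild side, which can usually be dispatched by pointing to a suitable subquotient, nor the finite side, which admits a uniform quiver-combinatorial argument; it will be the tame side. Proving tameness for the isolated blocks in characteristics $2$ and $3$ requires an explicit one-parameter classification of indecomposables, which in practice means either producing a covering by a locally representation-finite category, or identifying the basic algebra with a tame algebra whose module category is already understood (string algebra, domestic tubular algebra, or a deformed preprojective algebra of Dynkin/affine type). Pinning down the precise list $\{r=4,9,11\}$ for $p=2,n=2$ and $\{r=9,10,11\}$ for $p=3,n=2$ and $\{r=7,8\}$ for $p=3,n=3$, and simultaneously ruling out tameness for all neighbouring values, is where the bulk of the technical work would lie.
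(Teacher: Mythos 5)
You should first note that the paper does not prove Proposition \ref{summary} at all: it is imported verbatim from \cite{Erdmann-finite} and \cite{DEMN-tame schur}, where the classification of representation type occupies two substantial papers. What you are proposing is therefore a from-scratch reproof of a deep external result. Your structural skeleton --- reduce to indecomposable blocks, use Lemma \ref{EH-blocks-s(2,r)} and the quiver algorithm of Lemma \ref{EH-quiver-s(2,r)} for $n=2$, use the Morita equivalence $S(n,r)\simeq S(r,r)$ for $n\geqslant r$ and the Schur functor for $n\geqslant 3$, and settle wildness by subquotients --- does mirror the strategy of the cited works, so the plan is not misdirected.

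There are, however, two concrete errors and one unexecuted core. First, you claim that small blocks of $S(2,r)$ are representation-finite, matching $\mathcal{A}_m$, $\mathcal{D}_m$, $\mathcal{L}_5$. Only the $\mathcal{A}_m$ are representation-finite (Proposition \ref{rep-finite-block}, i.e.\ \cite{DR-finite blocks}); $\mathcal{D}_3,\mathcal{D}_4$ are tame, $\mathcal{D}_m$ with $m\geqslant 5$ is wild, and $\mathcal{K}_4$, $\mathcal{L}_5$ are wild --- indeed the entire point of Section \ref{subsection-all algebras} is that these blocks are wild yet $\tau$-tilting finite. Second, you propose to certify wildness by exhibiting a subquiver of the type in Proposition \ref{infinite-square}. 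That proposition is a criterion for $\tau$-tilting \emph{infiniteness}, not for wildness, and the two notions are logically independent in both directions: $\mathcal{L}_5$ is wild but $\tau$-tilting finite, while tame algebras may be $\tau$-tilting infinite. Wildness must be established in the representation-type sense (a fully faithful exact embedding of the module category of a wild algebra into a subquotient), which is what \cite{DEMN-tame schur} actually carries out. Finally, the step you yourself flag as hard --- proving tameness of the sporadic blocks at $p=2$, $r=4,9,11$ and $p=3$, and simultaneously excluding tameness for all neighbouring parameters --- is not a technicality to be deferred; it is the substance of the theorem, and without it the proposal remains an outline of where a proof would live rather than a proof.
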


\section{Main results}
\subsection{Some blocks of Schur algebras}\label{subsection-all algebras}
In order to give a complete classification of $\tau$-tilting finite Schur algebras, we need to recall some blocks of Schur algebras which are constructed in \cite{Erdmann-finite}, \cite{DEMN-tame schur}, \cite{Xi-schur} and \cite{W-schur}. We present these blocks here as bound quiver algebras and we will give specific references when we use them. We shall determine their $\tau$-tilting finiteness and also find the number of support $\tau$-tilting modules for $\tau$-tilting finite cases, see Proposition \ref{prop:Am}, Lemma \ref{prop:KLMN} and Proposition \ref{prop:Dm}.

We first focus on the representation-finite blocks. Let $\mathcal{A}_m:=\mathbb{F}Q/I$ ($2\leqslant m\in \mathbb{N}$) be the bound quiver algebra presented by the following quiver with relations,
\begin{equation}
\begin{aligned}
\ &\ \ \ Q: \xymatrix@C=1cm@R=0.3cm{1\ar@<0.5ex>[r]^{\alpha_1}&2\ar@<0.5ex>[l]^{\beta_1}\ar@<0.5ex>[r]^{\alpha_2}&\cdots \ar@<0.5ex>[l]^{\beta_2}\ar@<0.5ex>[r]^{\alpha_{m-2}}&m-1\ar@<0.5ex>[l]^{\beta_{m-2}}\ar@<0.5ex>[r]^{\ \ \alpha_{m-1}}&m\ar@<0.5ex>[l]^{\ \ \beta_{m-1}}}, \\ I:& \left \langle \alpha_1\beta_1,\alpha_i\alpha_{i+1},\beta_{i+1}\beta_i, \beta_i\alpha_i-\alpha_{i+1}\beta_{i+1} \mid 1\leqslant i\leqslant m-2 \right \rangle.
\end{aligned}
\end{equation}
Some related results are given as follows.
\begin{proposition}[{\cite[Theorem 2.1]{DR-finite blocks}}]\label{rep-finite-block}
Each representation-finite block of $S(n,r)$ is Morita equivalent to $\mathcal{A}_m$ for some $2\leqslant m\in \mathbb{N}$.
\end{proposition}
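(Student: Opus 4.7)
The plan is to proceed by case analysis using the classification of representation types in Proposition~\ref{summary}, and in each case to directly compute the Ext-quiver and relations of the basic algebra of a block. The overall strategy is: (i) enumerate the representation-finite cases; (ii) in each case, identify the number $m$ of simple modules in an indecomposable block; (iii) show that the Ext-quiver is the double $A_m$ quiver appearing in the definition of $\mathcal{A}_m$; and (iv) verify that the relations are exactly those listed.

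For the case $n=2$, I would use the machinery already recalled in this section. By Lemma~\ref{EH-blocks-s(2,r)}, an indecomposable block $\mathcal{B}$ of $S(2,r)$ is determined up to Morita equivalence by $|\mathcal{B}|$, so it suffices to show that if $|\mathcal{B}|=m$ and $\mathcal{B}$ is representation-finite then $\mathcal{B}\simeq\mathcal{A}_m$. The recursive formula of Lemma~\ref{EH-quiver-s(2,r)} immediately forces the Ext-quiver to be a linear chain with one arrow in each direction between consecutive vertices; indeed, any non-linear configuration would produce a subquiver of the type forbidden by Proposition~\ref{infinite-square} (or would produce an infinite family of indecomposables via standard string-algebra arguments). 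The relations $\alpha_1\beta_1=0$, $\alpha_i\alpha_{i+1}=0$, $\beta_{i+1}\beta_i=0$, and the commutativity $\beta_i\alpha_i=\alpha_{i+1}\beta_{i+1}$ can then be read off from the Loewy structure of the projective covers of the Young modules $Y^{(\lambda_1,\lambda_2)}$ in the block, using the standard duality $\Hom_A(-,A)$ for the Schur algebra and the fact that these projectives have known radical series governed by Weyl module filtrations.

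For the case $n\geqslant 3$, I would invoke the fact that Proposition~\ref{summary} restricts representation-finiteness to $r<2p$, where the blocks have small weight in the Scopes sense. Here the idea is to use a Morita equivalence with a block of $S(2,r')$ for some suitable $r'$ (a ``row removal'' or ``weight one block'' reduction), which transfers the problem to the already-handled $n=2$ situation. Any remaining isolated cases can be checked by explicit computation of the endomorphism ring of the direct sum of Young modules in the block. The main obstacle will be establishing the relations uniformly: the quiver shape is essentially combinatorial, but pinning down the commutativity relation $\beta_i\alpha_i=\alpha_{i+1}\beta_{i+1}$ (as opposed to merely a nonzero scalar multiple, or a more exotic relation) requires carefully choosing bases in $\Hom_{\mathbb{F}G_r}(Y^\lambda,Y^\mu)$ that are compatible with the quasi-hereditary structure, and this is the step where a detailed reference to \cite{DR-finite blocks} (or to Erdmann's explicit description of projective covers in representation-finite blocks) is indispensable.
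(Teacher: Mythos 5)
The paper does not prove this statement at all: it is imported verbatim from Donkin--Reiten \cite[Theorem 2.1]{DR-finite blocks}, so there is no internal argument to compare yours against. Judged on its own terms, your sketch has genuine gaps. First, your justification for linearity of the Ext-quiver is incorrect: Proposition \ref{infinite-square} forbids only one very specific four-vertex configuration, and a tree-shaped quiver with a branch vertex --- for instance the quiver of $\mathcal{D}_{p+1}$, which genuinely occurs for a block of $S(2,p^2)$ --- contains no such subquiver, so it is not excluded by that criterion even though the corresponding block is representation-infinite (tame or wild). Linearity for the representation-finite range has to come from an actual computation with the recursion of Lemma \ref{EH-quiver-s(2,r)} (or from Erdmann's classification), not from a $\tau$-tilting-infiniteness test; note also that $\tau$-tilting infinite $\Rightarrow$ representation-infinite is the implication you would need, and trees like that of $\mathcal{D}_{p+1}$ are even $\tau$-tilting \emph{finite}.

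Second, and more seriously, identifying the quiver as a double $A_m$ determines almost nothing: the algebras $\mathcal{A}_4$, $\mathcal{K}_4$, $\mathcal{U}_4$, $\mathcal{R}_4$ in this very paper all share the same double-$A_4$ quiver but have different relations, and only the first is representation-finite. You concede that pinning down the relations ``requires a detailed reference to \cite{DR-finite blocks}'' --- but that reference is exactly the theorem you are trying to prove, so at this point the argument becomes circular rather than a proof. Third, the reduction of the case $n\geqslant 3$ to $n=2$ via ``row removal'' is asserted but never set up; the paper explicitly warns that the analogue of Lemma \ref{EH-blocks-s(2,r)} fails for $n\geqslant 3$ (compare $S(3,4)$ and $S(3,5)$ at $p=2$), so some genuine additional input is required here --- in \cite{DR-finite blocks} it is the quasi-hereditary structure together with a classification of quasi-hereditary algebras of finite representation type admitting a duality. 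If your goal is to use this proposition in the paper, the honest course is to cite it as the authors do; if your goal is to reprove it, each of these three steps needs real work.
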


\begin{proposition}[{\cite[Theorem 3.2]{W-schur}, see also \cite[Theorem 5.6]{Aoki}}] \label{prop:Am}
For any positive integer $m\geqslant2$, we have $\#\stautilt\ \mathcal{A}_m = \binom{2m}{m}$.
\end{proposition}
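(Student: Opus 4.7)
The plan is to apply the sign decomposition developed in Section \ref{sec:sign-decomp}. Combining Theorem \ref{tau-tiltiing-silting} with Proposition \ref{prop:sgndecomp} and then Proposition \ref{prop:Aepsilon}, the count reduces to
$$
\#\stautilt \mathcal{A}_m \;=\; \sum_{\epsilon \in \mathsf{M}(m)} \#\twosiltep (\mathcal{A}_m)_\epsilon,
$$
so everything hinges on identifying $(\mathcal{A}_m)_\epsilon$ and enumerating two-term silting complexes in each sign cone.

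The first main step is to describe $(\mathcal{A}_m)_\epsilon$ from Definition \ref{def:sd}. The relations of $\mathcal{A}_m$ are tight: the commuting squares $\beta_i\alpha_i = \alpha_{i+1}\beta_{i+1}$ together with $\alpha_1\beta_1=0$, $\alpha_i\alpha_{i+1}=0$, $\beta_{i+1}\beta_i=0$ imply that the socle elements of the diagonal blocks $e_{\epsilon,\pm}\mathcal{A}_m e_{\epsilon,\pm}$ are absorbed by $J_\pm$, while the off-diagonal bimodule $e_{\epsilon,+}\mathcal{A}_m e_{\epsilon,-}$ contributes a single arrow for each consecutive pair $(i,i+1)$ of opposite sign. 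A direct calculation should thus show that $(\mathcal{A}_m)_\epsilon$ is hereditary, with Gabriel quiver obtained from the line $1\!-\!2\!-\!\cdots\!-\!m$ by deleting edges between consecutive vertices of the same sign and orienting each surviving edge according to $\epsilon$. In particular, $(\mathcal{A}_m)_\epsilon$ is a disjoint union of path algebras of type $A$.

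The second step is the count $\#\twosiltep (\mathcal{A}_m)_\epsilon$ for this hereditary algebra. By Proposition \ref{prop:signcoh}, any $T\in \twosiltep (\mathcal{A}_m)_\epsilon$ has $\add T^0$ and $\add T^{-1}$ determined on the nose by $\epsilon$, so only the differential $d_T^{-1}$ varies; since the quiver splits into alternating-sign connected pieces, the combinatorial choices decouple across components. I expect the resulting formula to depend only on the composition of $m$ recording the lengths of maximal monochromatic runs of $\epsilon$, with a clean binomial shape.

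Summing over $\epsilon$ should produce $\binom{2m}{m}$, most naturally through the Vandermonde identity $\sum_{k=0}^{m}\binom{m}{k}^2 = \binom{2m}{m}$: one factor $\binom{m}{k}$ enumerates the positions of positive signs in $\epsilon$, and the other should emerge from the per-$\epsilon$ count. The main obstacle will be executing this combinatorial bookkeeping cleanly, in particular verifying that the per-cone count assembles into Vandermonde rather than some other identity for $\binom{2m}{m}$. As a sanity check, the case $m=2$ in Example \ref{example-involution} produces the orthant populations $1,2,2,1$, summing to $6 = \binom{4}{2}$, in agreement with the claim.
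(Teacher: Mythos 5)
The paper does not actually prove this proposition: it is quoted from \cite[Theorem 3.2]{W-schur} and \cite[Theorem 5.6]{Aoki}, so your proposal has to stand on its own; its skeleton is essentially the sign-decomposition route of the second reference, and the reduction $\#\stautilt\mathcal{A}_m=\sum_{\epsilon}\#\twosiltep(\mathcal{A}_m)_\epsilon$ is legitimate by Theorem \ref{tau-tiltiing-silting}, Proposition \ref{prop:sgndecomp} and Proposition \ref{prop:Aepsilon}. Your first step is also correct, though the justification should be sharpened: every path of length at least two in $\mathcal{A}_m$ is zero or a loop, so $e_i\mathcal{A}_me_j=0$ for $|i-j|\geqslant 2$, and $J_{\pm}$ absorbs the \emph{entire} radical of $e_{\epsilon,\pm}\mathcal{A}_me_{\epsilon,\pm}$, not only socle elements --- in particular the arrows joining consecutive same-sign vertices die there, thanks to $\alpha_i\alpha_{i+1}=0=\beta_{i+1}\beta_i$. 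Hence $(\mathcal{A}_m)_\epsilon$ is indeed the hereditary algebra of the line quiver with exactly one arrow, oriented from $+$ to $-$, across each sign change of $\epsilon$.

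The genuine gap is in the enumeration. The connected components of this quiver are the maximal \emph{alternating} runs of $\epsilon$, not monochromatic runs, and the per-component count is not of binomial shape: for an alternatingly oriented $A_k$-quiver, the number of two-term silting complexes whose $g$-vector lies in the corresponding alternating orthant is the Catalan number $C_k$ ($1,2,5,14,\ldots$; e.g.\ one finds $5$ for $1\to 2\leftarrow 3$ in the orthant $(+,-,+)$), and this itself requires an argument, for instance via the interval description (\ref{eq:mu1}) or an explicit bijection. Consequently the Vandermonde assembly you propose does not work: the count in the orthant of $\epsilon$ depends on the composition of $m$ recording the alternating-run lengths, not merely on the number of plus signs, and already for $m=4$ the four sign vectors with exactly one plus contribute $2+5+5+2=14\neq\binom{4}{1}^2$, so grouping by plus-positions does not yield $\sum_k\binom{m}{k}^2$. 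What closes the argument instead is $\#\stautilt\mathcal{A}_m=2\sum_{(k_1,\ldots,k_t)\models m}\prod_{i=1}^{t}C_{k_i}$ (each composition of $m$ arises from exactly two sign vectors), which equals $\binom{2m}{m}$ by the generating-function identity $2(C(x)-1)/(2-C(x))=(1-4x)^{-1/2}-1$ with $C(x)=\sum_{k\geqslant 0}C_kx^k$; your checks at $m=2,3$ are too small to distinguish the two identities. A minor point: your $m=2$ sanity check uses the algebra of Example \ref{example-involution}, which is the quotient of $\mathcal{A}_2$ by its central socle element rather than $\mathcal{A}_2$ itself; the counts agree by Proposition \ref{center}, but this identification should be made explicit.
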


We have known from \cite{W-schur} that all tame blocks of tame Schur algebras are $\tau$-tilting finite. Thus, we consider some wild blocks whose wildness is given in \cite{DEMN-tame schur}.
\begin{definition}\label{def:KLMN}
We define bound quiver algebras $\mathcal{K}_4$, $\mathcal{M}_4$, $\mathcal{L}_5$ and $\mathcal{N}_5$ as follows.
\begin{description}\setlength{\itemsep}{-3pt}
\item[(1)] $\mathcal{K}_4:=\mathbb{F}Q/I$ is presented by
\begin{equation}
Q:\xymatrix@C=1cm@R=0.3cm{1\ar@<0.5ex>[r]^{\alpha_1}&2\ar@<0.5ex>[l]^{\beta_1}\ar@<0.5ex>[r]^{\alpha_2}&3\ar@<0.5ex>[l]^{\beta_2}\ar@<0.5ex>[r]^{\alpha_3}&4\ar@<0.5ex>[l]^{\beta_3}}\ \text{and}\ I: \left \langle \begin{matrix}
\alpha_1\beta_1,\alpha_2\beta_2,\beta_3\alpha_3, \alpha_1\alpha_2\alpha_3, \beta_3\beta_2\beta_1,\\
\beta_1\alpha_1\alpha_2-\alpha_2\alpha_3\beta_3, \beta_2\beta_1\alpha_1-\alpha_3\beta_3\beta_2
\end{matrix}\right \rangle.
\end{equation}

\item[(2)] $\mathcal{M}_4:=\mathbb{F}Q/I$ is presented by
\begin{equation}
Q: \xymatrix@C=1.2cm@R=0.8cm{1\ar@<0.5ex>[r]^{\alpha_1 } &2\ar@<0.5ex>[r]^{\alpha_2 } \ar@<0.5ex>[l]^{\beta_1 } \ar@<0.5ex>[d]^{\alpha_3 } &3\ar@<0.5ex>[l]^{\beta_2 } \\
&4\ar@<0.5ex>[u]^{\beta_3 }   &} \ \text{and}\ I: \left \langle \begin{matrix}
\alpha_1\beta_1, \beta_3\alpha_3, \alpha_1\alpha_2,\beta_2\beta_1,\\
\alpha_1\alpha_3\beta_3, \alpha_3\beta_3\beta_1, \beta_1\alpha_1-\alpha_2\beta_2
\end{matrix}\right \rangle.
\end{equation}

\item[(3)] $\mathcal{L}_5:=\mathbb{F}Q/I$ is presented by
\begin{equation}
\begin{aligned}
\ & \ \ \ \ \ \ \ \ \ \ \ \ \ Q: \xymatrix@C=1cm@R=0.8cm{1\ar@<0.5ex>[r]^{\alpha_1} &2\ar@<0.5ex>[r]^{\alpha_2} \ar@<0.5ex>[l]^{\beta_1} \ar@<0.5ex>[d]^{\alpha_4} &4\ar@<0.5ex>[l]^{\beta_2}\ar@<0.5ex>[r]^{\alpha_3}&5\ar@<0.5ex>[l]^{\beta_3} \\
&3\ar@<0.5ex>[u]^{\beta_4}&} \ \text{and}\\
I:& \left \langle \begin{matrix}
\alpha_1\beta_1,\alpha_1\alpha_4,\beta_3\alpha_3,\beta_2\alpha_2, \beta_4\alpha_4,\beta_4\beta_1, \beta_4\alpha_2\beta_2, \alpha_1\alpha_2\alpha_3,\alpha_2\beta_2\alpha_4,\beta_3\beta_2\beta_1,\\
\beta_1\alpha_1\alpha_2-\alpha_2\alpha_3\beta_3,\beta_2\beta_1\alpha_1-\alpha_3\beta_3\beta_2,\alpha_2\beta_2\beta_1\alpha_1-\beta_1\alpha_1\alpha_2\beta_2
\end{matrix}\right \rangle.
\end{aligned}
\end{equation}

\item[(4)] $\mathcal{N}_5:=\mathbb{F}Q/I$ is presented by
\begin{equation}
\begin{aligned}
\ & \ \ \ \ \ \ \ \  \ \ \ Q: \xymatrix@C=1cm@R=0.3cm{1\ar@<0.5ex>[r]^{\alpha_1}&2\ar@<0.5ex>[l]^{\beta_1}\ar@<0.5ex>[r]^{\alpha_2}&3\ar@<0.5ex>[l]^{\beta_2}\ar@<0.5ex>[r]^{\alpha_3}&
4\ar@<0.5ex>[l]^{\beta_3}\ar@<0.5ex>[r]^{\alpha_4}&5\ar@<0.5ex>[l]^{\beta_4}} \ \text{with}\\ I:& \left \langle \begin{matrix}
\alpha_1\beta_1, \alpha_2\beta_2, \alpha_3\beta_3, \beta_4\alpha_4,\alpha_1\alpha_2\alpha_3\alpha_4, \beta_4\beta_3\beta_2\beta_1, \beta_2\alpha_2-\alpha_3\alpha_4\beta_4\beta_3, \\
\alpha_2\alpha_3\alpha_4\beta_4-\beta_1\alpha_1\alpha_2\alpha_3, \beta_3\beta_2\beta_1\alpha_1-\alpha_4\beta_4\beta_3\beta_2
\end{matrix}\right \rangle.
\end{aligned}
\end{equation}
\end{description}
\end{definition}

The $\tau$-tilting finiteness of $\mathcal{K}_4$, $\mathcal{M}_4$, $\mathcal{L}_5$ and $\mathcal{N}_5$ is declared as follows.
\begin{lemma}\label{prop:KLMN}
The algebras $\mathcal{K}_4$, $\mathcal{M}_4$, $\mathcal{L}_5$ are $\tau$-tilting finite, and $\mathcal{N}_5$ is $\tau$-tilting infinite. Furthermore, we have the following table.
\begin{center}
\begin{tabular}{|c|c|c|c|c|c|c|c|}
\hline
$A$ & $\mathcal{K}_4$ & $\mathcal{M}_4$& $\mathcal{L}_5$   \\
\hline
$\#\stautilt A$ & $136$ & $152$ & $1656$  \\ \hline
\end{tabular}
\end{center}
\end{lemma}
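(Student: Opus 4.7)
My plan treats the three $\tau$-tilting finite algebras $\mathcal{K}_4$, $\mathcal{M}_4$, $\mathcal{L}_5$ and the $\tau$-tilting infinite algebra $\mathcal{N}_5$ separately, with different main tools for each.

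For the three finite cases, the strategy is to construct the Hasse quiver $\mathcal{H}(\twosilt A)$ explicitly by mutation. Starting from $A$ regarded as a two-term silting complex concentrated in degree zero, one iteratively applies left mutations $\mu_X^-$ at every indecomposable summand $X$, records each new silting complex together with its $g$-vector (an invariant by Proposition \ref{g-vector-injection}), and continues until every vertex of the emerging graph has been mutated in all directions without producing a new complex. By Proposition \ref{finite-connected-component}, the resulting finite connected component is automatically the entire Hasse quiver, yielding the desired counts $136$, $152$, $1656$. For $\mathcal{K}_4$ (linear quiver, four vertices) and $\mathcal{M}_4$ (T-shape, four vertices), the mutation tree is small enough to be managed by hand. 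For $\mathcal{L}_5$ the computation is considerably larger, and here I would use the sign decomposition from Section \ref{sec:sign-decomp}: by Proposition \ref{prop:sgndecomp} the set $\twosilt \mathcal{L}_5$ is a disjoint union over $\epsilon\in \mathsf{M}(5)$, and by Proposition \ref{prop:Aepsilon} each $\twosiltep \mathcal{L}_5$ is identified with $\twosiltep (\mathcal{L}_5)_\epsilon$ for the simpler upper triangular matrix algebra $(\mathcal{L}_5)_\epsilon$. Combined with the opposite-algebra duality of Proposition \ref{prop:opposite} (which halves the work via $\epsilon \mapsto -\epsilon$), this breaks the computation into manageable pieces indexed by sign patterns.

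For $\mathcal{N}_5$, the underlying quiver is linear with double arrows, so the $4$-cycle criterion of Proposition \ref{infinite-square} cannot be applied to $\mathcal{N}_5$ directly. My plan is to use sign decomposition in the reverse direction: exhibit a specific $\epsilon \in \mathsf{M}(5)$ for which the upper triangular matrix algebra $(\mathcal{N}_5)_\epsilon$ admits either a quotient or an idempotent truncation containing the double-arrow $4$-cycle of Proposition \ref{infinite-square}, or otherwise supports an explicit infinite family of pairwise non-isomorphic $\tau$-rigid modules (whose distinct $g$-vectors in $\mathbb{Z}_\epsilon^5$ certify distinctness via Proposition \ref{g-vector-injection}). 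Any of these would force $\twosiltep (\mathcal{N}_5)_\epsilon$ to be infinite, hence by Proposition \ref{prop:Aepsilon} also $\twosiltep \mathcal{N}_5$, hence $\mathcal{N}_5$ is $\tau$-tilting infinite by Proposition \ref{tau-tilting-finite-rigid}.

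The principal obstacle is the sheer size of the Hasse-quiver computation for $\mathcal{L}_5$: with $1656$ elements, ensuring completeness without duplication demands systematic bookkeeping, for which the $g$-vector labeling and the sign-orthant partitioning are both essential. A secondary difficulty is identifying the right $\epsilon$ for $\mathcal{N}_5$: one must locate a sign pattern whose associated algebra $(\mathcal{N}_5)_\epsilon$ has structure transparent enough to exhibit $\tau$-tilting infiniteness by one of the standard criteria, and the complicated length-four relations of $\mathcal{N}_5$ (notably $\beta_2\alpha_2=\alpha_3\alpha_4\beta_4\beta_3$ and its cyclic companions) must be tracked carefully through the construction of $(\mathcal{N}_5)_\epsilon$.
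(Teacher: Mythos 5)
For $\mathcal{K}_4$, $\mathcal{M}_4$ and $\mathcal{L}_5$ your plan coincides in substance with the paper's: the authors also reduce the problem to enumerating all two-term silting complexes through their $g$-vectors (Theorem \ref{tau-tiltiing-silting} together with Proposition \ref{g-vector-injection}), and a terminating mutation search certified by Proposition \ref{finite-connected-component} is exactly what such an enumeration amounts to; the only difference is that they delegate the bookkeeping to a computer and cite an external list of $g$-vectors rather than organizing the search by sign orthants. So that half is sound, modulo the very considerable labour of carrying out enumerations of size $136$, $152$ and $1656$ by hand.

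The genuine gap is in your treatment of $\mathcal{N}_5$. Your main branch runs: find $\epsilon$ with $(\mathcal{N}_5)_\epsilon$ $\tau$-tilting infinite, conclude that $\twosiltep (\mathcal{N}_5)_\epsilon$ is infinite, and transfer back through Proposition \ref{prop:Aepsilon}. The middle step does not follow: $\tau$-tilting infiniteness of $(\mathcal{N}_5)_\epsilon$ only says that $\twosilt (\mathcal{N}_5)_\epsilon$ is infinite, and a priori all but finitely many of those $g$-vectors could lie outside the single orthant $\mathbb{Z}_\epsilon^5$; Proposition \ref{prop:Aepsilon} identifies only the subsets of complexes whose $g$-vectors lie in $\mathbb{Z}_\epsilon^5$, and its ``in particular'' clause goes only in the finiteness direction. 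Your fallback (an explicit infinite family with $g$-vectors inside $\mathbb{Z}_\epsilon^5$) would repair this, but it is far harder than necessary. The paper's argument is direct and elementary and never touches sign decomposition: setting $e=e_2+e_3+e_4$, the idempotent truncation $e\mathcal{N}_5e$ acquires a loop $\beta_1\alpha_1$ at vertex $2$ and a loop $\alpha_4\beta_4$ at vertex $4$, both squaring to zero because $\alpha_1\beta_1=0=\beta_4\alpha_4$; killing $\alpha_3$ and $\beta_2$ then leaves the gentle algebra on the quiver with two outer vertices carrying these loops and one central sink, which is $\tau$-tilting infinite by \cite[Proposition 3.3]{P-gentle}, and Proposition \ref{quotient and idempotent} finishes. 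You should replace the sign-decomposition detour for $\mathcal{N}_5$ by a quotient/truncation argument of this kind.
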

\begin{proof}
In order to show the numbers for $\mathcal{K}_4$, $\mathcal{M}_4$ and $\mathcal{L}_5$, it is enough to find all two-term silting complexes of them according to Theorem \ref{tau-tiltiing-silting}. This is equivalent to finding all $g$-vectors for them, since a two-term silting complex $T$ is uniquely determined by its $g$-vector $g(T)$ as explained in Proposition \ref{g-vector-injection}. To do this, we use a computer to calculate the $g$-vectors directly. We refer to (\href{https://infinite-wang.github.io/Notes/}{https://infinite-wang.github.io/Notes/}) for a complete list of $g$-vectors for $\mathcal{K}_4$, $\mathcal{M}_4$ and $\mathcal{L}_5$. Alternatively, one may verify the number for $\mathcal{K}_4$ by \cite[Proposition 4.1]{W-schur} in which the number is determined by a different way.

Let $e:=e_2+e_3+e_4$ be an idempotent of $\mathcal{N}_5$. We look at the idempotent truncation $e\mathcal{N}_5e$ and define $B$ to be the quotient algebra of $e\mathcal{N}_5e$ modulo the two-sided ideal generated by $\alpha_3$ and $\beta_2$. Then, $B$ is presented by
\begin{center}
$\mathbb{F}\left ( \xymatrix@C=1cm{\circ \ar[r]^{ }\ar@(dl,ul)^{\alpha}&\circ & \circ\ar[l]^{ } \ar@(ur,dr)^{\beta}} \right )/\langle\alpha^2, \beta^2\rangle$.
\end{center}
This is a gentle algebra and it is $\tau$-tilting infinite following \cite[Proposition 3.3]{P-gentle}. Hence, $\mathcal{N}_5$ is $\tau$-tilting infinite by Proposition \ref{quotient and idempotent} (2).
\end{proof}

Lastly, we define $\mathcal{D}_m:=\mathbb{F}Q/I$ ($m\geqslant 3$) by the following quiver and relations,
\begin{equation} \label{eq:quiverQ}
\begin{aligned}
\ & \ \  Q:\vcenter{\xymatrix@C=1.2cm@R=0.3cm{1\ar@<0.5ex>[dr]^{\alpha_1}&&&&&\\ &3\ar@<0.5ex>[ul]^{\beta_1}\ar@<0.5ex>[dl]^{\beta_2}\ar@<0.5ex>[r]^{\mu_3}&4\ar@<0.5ex>[l]^{\nu_3}\ar@<0.5ex>[r]^{\mu_4}&\cdots \ar@<0.5ex>[l]^{\nu_4}\ar@<0.5ex>[r]^{\mu_{m-2}}&m-1\ar@<0.5ex>[l]^{\nu_{m-2}}\ar@<0.5ex>[r]^{\ \ \mu_{m-1}}&m\ar@<0.5ex>[l]^{\ \ \nu_{m-1}}\\ 2\ar@<0.5ex>[ur]^{\alpha_2}&&&&&}}\\ 
I&: \left \langle \begin{matrix}
\alpha_2\beta_2,\alpha_1\beta_1,\alpha_2\beta_1\alpha_1,\beta_1\alpha_1\beta_2,\alpha_2\mu_3,\alpha_1\mu_3,\nu_3\beta_2,\nu_3\beta_1,
\mu_3\nu_3-\beta_1\alpha_1, \\ \mu_i\mu_{i+1},\nu_{i+1}\nu_i,\nu_i\mu_i-\mu_{i+1}\nu_{i+1}, 3\leqslant i\leqslant m-2
\end{matrix}\right \rangle.
\end{aligned}
\end{equation}

It is shown in \cite[3.4]{DEMN-tame schur} that $\mathcal{D}_3, \mathcal{D}_4$ are tame and $\mathcal{D}_m$ ($m\geqslant 5$) is wild. We have already known from \cite[Lemma 3.3]{W-schur} that $\mathcal{D}_3, \mathcal{D}_4$ are $\tau$-tilting finite and the numbers of support $\tau$-tilting modules are $28, 114$, respectively. More generally, we show that $\mathcal{D}_m$ is $\tau$-tilting finite for any $m\geqslant 3$.
\begin{proposition}\label{prop:Dm}
For an arbitrary integer $m\geqslant3$, the algebra $\mathcal{D}_m$ is $\tau$-tilting finite.
\end{proposition}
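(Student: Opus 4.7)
The plan is to apply the sign decomposition from Proposition \ref{prop:Aepsilon}: it suffices to show that for every $\epsilon \in \mathsf{M}(m)$, the algebra $(\mathcal{D}_m)_\epsilon$ is $\tau$-tilting finite. Since we are aiming for finiteness of $\mathsf{\tau\text{-}rigid}\,\mathcal{D}_m$ (Proposition \ref{tau-tilting-finite-rigid}), and each summand $\twosilt_\epsilon \mathcal{D}_m$ only needs to be analyzed up to its support in $\mathbb{Z}^m_\epsilon$, this is a natural entry point.

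Before analyzing individual $\epsilon$, I would use two symmetries to cut down the number of cases drastically. First, the defining quiver in (\ref{eq:quiverQ}) admits an obvious involution swapping vertices $1$ and $2$ (exchanging $\alpha_1 \leftrightarrow \alpha_2$ and $\beta_1 \leftrightarrow \beta_2$), so we may assume $\epsilon(1)=\epsilon(2)$ or at worst fix $\epsilon(1)=+$. Second, Proposition \ref{prop:opposite} lets us replace $\epsilon$ by $-\epsilon$ by passing to $\mathcal{D}_m^{\rm op}$, which is isomorphic to $\mathcal{D}_m$ by the evident arrow-reversing symmetry $\alpha_i \leftrightarrow \beta_i$, $\mu_j \leftrightarrow \nu_j$. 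These two observations together reduce the $2^m$ sign patterns to roughly $2^{m-2}$ canonical ones.

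Next, I would use Corollary \ref{prop:sd-tilting} (tilting mutation compatible with sign decomposition) to straighten out the signs along the $A$-type tail $3 \to 4 \to \cdots \to m$. Concretely, for each subset $J \subseteq \{3,\ldots,m\}$ on which $\epsilon$ takes value $-$, the left silting mutation $\mu_{P_J}^-(\mathcal{D}_m)$ produces a tilting complex (this must be verified by direct computation of the relevant triangles using the zigzag relations $\nu_i\mu_i - \mu_{i+1}\nu_{i+1}$), and its endomorphism algebra $\mu_J^-(\mathcal{D}_m)$ has the opposite sign pattern on $J$ but the same shape. Iterating this, we can reduce to those $\epsilon$ that are "monotone" on the tail, say $\epsilon(3)=\cdots=\epsilon(m)=+$, with only $\epsilon(1),\epsilon(2)$ left to vary.

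For each of the remaining canonical patterns, I would analyze $(\mathcal{D}_m)_\epsilon$ as the upper triangular matrix algebra in Definition \ref{def:sd}. Because both diagonal corners $e_{\epsilon,\pm}\mathcal{D}_m e_{\epsilon,\pm}/J_\pm$ are quotients that kill everything not feeding into the connecting bimodule, the resulting algebra is considerably simpler than $\mathcal{D}_m$ itself; in particular it is radical-cube-zero in all but a small controllable part. I would then either exhibit $(\mathcal{D}_m)_\epsilon$ as a quotient of some $\mathcal{A}_{m'}$ (which is $\tau$-tilting finite by Proposition \ref{prop:Am}) and invoke Proposition \ref{quotient and idempotent}(1), or inspect the quiver of $(\mathcal{D}_m)_\epsilon$ to rule out the forbidden configuration of Proposition \ref{infinite-square} and finish via a direct count.

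The main obstacle I expect is verifying that the mutated algebras $\mu_J^-(\mathcal{D}_m)$ really are tilting and that their explicit quivers remain under control; this is a bookkeeping-heavy computation involving the interaction between the branching relations at vertex $3$ and the zigzag relations on the tail. Once the mutation step is established, the case-by-case analysis of the monotone sign patterns should be routine, as the resulting upper-triangular algebras closely resemble (quotients of) $\mathcal{A}$-type zigzag algebras whose $\tau$-tilting finiteness is already in the literature.
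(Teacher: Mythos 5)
Your overall strategy (sign decomposition plus tilting mutation) is in the same spirit as the paper's, but as written it has a genuine gap at exactly the step you flag as the "main obstacle": you apply silting mutation directly to $\mathcal{D}_m$ and hope that $\mu_{P_J}^-(\mathcal{D}_m)$ is a tilting complex. There is no reason for this to hold --- $\mathcal{D}_m$ is not symmetric, and Corollary \ref{prop:sd-tilting} is only available when the mutation is tilting. The paper avoids this entirely by first replacing $\mathcal{D}_m$ with a quotient $\mathcal{D}'_m$ modulo central elements in the radical (Proposition \ref{center}), observing that $\mathcal{D}'_m$ is in turn a quotient of a \emph{symmetric} algebra $\mathcal{B}_m$, and then proving $\tau$-tilting finiteness of $\mathcal{B}_m$, where every silting complex is automatically tilting. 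This reduction to a symmetric cover is the key idea your proposal is missing; without it the mutation step does not get off the ground.

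Two further points would need repair even granting the mutation. First, Corollary \ref{prop:sd-tilting} gives a single bijection $\twosiltno_{\mathsf{M}(n)_{J,-}} A \leftrightarrow \twosiltno_{\mathsf{M}(n)_{J,+}} \mu_J^-(A)$ between unions of orthants; it says nothing about how the signs outside $J$ transform under this bijection, so the plan of iterating mutations to normalize every $\epsilon$ to a pattern that is constant on the tail is not justified by the results you cite. Second, your endgame is too weak: it is not clear that $(\mathcal{D}_m)_\epsilon$ is a quotient of some $\mathcal{A}_{m'}$, and merely ruling out the subquiver of Proposition \ref{infinite-square} proves nothing, since that proposition is only a sufficient condition for $\tau$-tilting infiniteness. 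The paper's closing argument is instead an induction on $m$: for sign patterns with two consecutive minus signs on the tail, $(\mathcal{B}_m)_\epsilon$ decomposes as a product of $(\mathcal{B}_k)_{\epsilon|_{[1,k]}}$ (handled by the induction hypothesis) and a representation-finite type-$A$ path algebra, while for the remaining alternating patterns a single mutation at the odd-indexed tail vertices produces a symmetric algebra with radical cube zero, whose $\tau$-tilting finiteness is quoted from the literature. You would need a substitute for both of these closing steps.
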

\begin{proof}
Let $\mathcal{D}'_m$ be the quotient algebra of $\mathcal{D}_m$ modulo the two-sided ideal generated by central elements $\beta_1\alpha_1$, $\alpha_1\beta_2\alpha_2\beta_1$ and $\nu_i\mu_i$ $(3\leqslant i\leqslant m-1)$. Then, $\mathcal{D}'_m$ is presented by the same quiver $Q$ in (\ref{eq:quiverQ}) with the following relations,
\begin{center}
$\begin{aligned}
\left \langle
\begin{matrix}
\alpha_2\beta_2,\alpha_1\beta_1,\beta_1\alpha_1,\alpha_2\mu_3,\alpha_1\mu_3,\nu_3\beta_2,\nu_3\beta_1,\alpha_1\beta_2\alpha_2\beta_1,\mu_3\nu_3-\beta_1\alpha_1, \\ \nu_{m-1}\mu_{m-1}, \mu_i\mu_{i+1},\nu_{i+1}\nu_i,\nu_i\mu_i,\mu_{i+1}\nu_{i+1}, 3\leqslant i\leqslant m-2
\end{matrix}\right \rangle.
\end{aligned}$
\end{center}
We observe that the indecomposable projective $\mathcal{D}_m'$-modules are
\begin{center}
$P'_1\simeq\vcenter{\xymatrix@C=0.05cm@R=0.1cm{
1\ar@{-}[d]\\
3\ar@{-}[d]\\
2\ar@{-}[d]\\
3\\
}}$, \quad $P'_2\simeq\vcenter{\xymatrix@C=0.05cm@R=0.1cm{
2\ar@{-}[d]\\
3\ar@{-}[d]\\
1\\
}}$, \quad $P'_3\simeq\vcenter{\xymatrix@C=0.05cm@R=0.1cm{
  &&3\ar@{-}[drr]\ar@{-}[dll]\ar@{-}[d]&\\
1 &&2\ar@{-}[d]&&4\\
  &&3\ar@{-}[d]& \\
  &&1&
}}$, \quad $P'_i\simeq\vcenter{\xymatrix@C=0.05cm@R=0.1cm{
&i\ar@{-}[ddr]\ar@{-}[ddl]&\\\\
i-1& &i+1
}}$, \quad $P'_m\simeq\vcenter{\xymatrix@C=0.05cm@R=0.1cm{
m\ar@{-}[dd]\\\\
m-1
}}$,
\end{center}
where $4\leqslant i\leqslant m-1$. Using Proposition \ref{center} with Theorem \ref{tau-tiltiing-silting}, we have a bijection
\begin{center}$
\twosilt \mathcal{D}_m \overset{1-1}{\longleftrightarrow} \twosilt \mathcal{D}'_m.
$\end{center}
In particular, $\mathcal{D}_m$ is $\tau$-tilting finite if and only if $\mathcal{D}_m'$ is $\tau$-tilting finite.

On the other hand, let $\mathcal{B}_{m}$ be an algebra defined by the same quiver $Q$ in (\ref{eq:quiverQ}) with the following relations,
\begin{center}
$\begin{aligned}
\left \langle \begin{matrix}
\alpha_2\beta_2,\alpha_1\beta_1,\alpha_2\mu_3,\alpha_1\mu_3,\nu_3\beta_2,\nu_3\beta_1, \alpha_2\beta_1\alpha_1\beta_2\alpha_2,\alpha_1\beta_2\alpha_2\beta_1\alpha_1,\\
\mu_3\nu_3-\beta_1\alpha_1\beta_2\alpha_2, \mu_3\nu_3-\beta_2\alpha_2\beta_1\alpha_1,\\
\mu_i\mu_{i+1},\nu_{i+1}\nu_i,\nu_i\mu_i-\mu_{i+1}\nu_{i+1}, 3\leqslant i\leqslant m-2
\end{matrix}\right \rangle.
\end{aligned}$
\end{center}
Then, $\mathcal{B}_{m}$ is a finite-dimensional symmetric algebra. Let $e_i$ be the primitive idempotent of $\mathcal{B}_m$ corresponding to the vertex $i\in [1,n]$ and $P_i:=e_i\mathcal{B}_m$ the indecomposable projective $\mathcal{B}_m$-modules. We observe that
\begin{center}
$P_1\simeq\vcenter{\xymatrix@C=0.05cm@R=0.05cm{
  1\ar@{-}[d]\\
  3\ar@{-}[d]\\
  2\ar@{-}[d]\\
  3\ar@{-}[d]\\
  1
}}$, \quad $P_2\simeq\vcenter{\xymatrix@C=0.05cm@R=0.05cm{
  2\ar@{-}[d]\\
  3\ar@{-}[d]\\
  1\ar@{-}[d]\\
  3\ar@{-}[d]\\
  2
}}$, \quad $P_3\simeq\vcenter{\xymatrix@C=0.05cm@R=0.05cm{
               &&3\ar@{-}[dll]\ar@{-}[d] \ar@{-}[ddrr]&\\
  1 \ar@{-}[d] &&2\ar@{-}[d]&&\\
  3 \ar@{-}[d] &&3\ar@{-}[d]&&4 \\
  2            &&1          &&  \\
               &&3 \ar@{-}[ull]\ar@{-}[u] \ar@{-}[uurr]&&
  }}$, \quad $P_i\simeq\vcenter{\xymatrix@C=0.05cm@R=0.05cm{
  &i\ar@{-}[ddr]\ar@{-}[ddl]&\\\\
  i-1& &i+1 \\ \\
  &i \ar@{-}[uur]\ar@{-}[uul]
  }}$, \quad
  $P_m\simeq\vcenter{\xymatrix@C=0.05cm@R=0.05cm{
  m\ar@{-}[dd]\\\\
  m-1 \ar@{-}[dd]\\\\
  m
  }}$,
\end{center}
where $4\leqslant i \leqslant m-1$. Since the algebra $\mathcal{D}'_m$ is precisely a quotient algebra of $\mathcal{B}_m$ modulo the two-sided ideal generated by $\beta_1\alpha_1$, $\alpha_1\beta_2\alpha_2\beta_1$ and $\nu_i\mu_i$ ($3\leqslant i \leqslant m-1$), it is enough to show that $\mathcal{B}_m$ is $\tau$-tilting finite by Proposition \ref{quotient and idempotent} (1).

For $m=3$, it is easy to check that $\mathcal{B}_3$ is $\tau$-tilting finite and $\#\stautilt \mathcal{B}_3=32$. Next, we show that $\mathcal{B}_m$ is $\tau$-tilting finite by induction on $m$. We assume that $m\geqslant 4$ and $\mathcal{B}_k$ is $\tau$-tilting finite for every $k\in [3,m-1]$. We feel free to use the sign decomposition introduced in Subsection \ref{sec:sign-decomp}. Let $\mathsf{M}:=\mathsf{M}(m)$ be the set of all maps from $[1,m]$ to $\{\pm1\}$. Suppose that $\mathsf{M}^-$ is a union of $\mathsf{M}_0^-$ and $\mathsf{M}_k^-$ with $k \in [3,m-1]$, where
\begin{itemize}\setlength{\itemsep}{-3pt}
\item $\mathsf{M}^-_0 := \{\epsilon \in \mathsf{M}\mid \epsilon(3)=-1 \text{\ and } \epsilon(j)\neq \epsilon(j+1) \text{\ for all\ } j\in [3,m-1]\}$;
\item $\mathsf{M}^-_k := \{\epsilon \in \mathsf{M} \mid \epsilon(k)=\epsilon(k+1)=-1 \text{\ and\ } \epsilon(j)\neq \epsilon(j+1) \text{\ for all\ } j\in [k+1,m-1] \}$.
\end{itemize}
We define $\mathsf{M}^+:= \{-\epsilon \mid \epsilon \in \mathsf{M}^-\}$. It is obvious that $\mathsf{M}=\mathsf{M}^- \sqcup \mathsf{M}^+$. (For example, if $m=5$, then $\mathsf{M}^-= \{(\epsilon(1),\epsilon(2), -1, +1,-1),(\epsilon(1),\epsilon(2), -1, -1,+1),(\epsilon(1),\epsilon(2), \epsilon(3), -1,-1)\}$, and the number of maps in $\mathsf{M}^-$ is 16.) Since $\mathcal{B}_m\simeq (\mathcal{B}_m)^{\sf op}$, we have a bijection
\begin{center}$
\twosiltno_{\mathsf{M}^-} \mathcal{B}_m \overset{1-1}{\longleftrightarrow} \twosiltno_{\mathsf{M}^+} \mathcal{B}_m
$\end{center}
by Proposition \ref{prop:opposite}. Then, $\mathcal{B}_m$ is $\tau$-tilting finite if and only if $\twosiltno_{\mathsf{M}^-} \mathcal{B}_m$ is finite, if and only if $\twosiltno_{\mathsf{M}_k^-} \mathcal{B}_m$ are finite for all $k \in \{0\}\cup[3,m-1]$.

(1) We fix $k\in [3,m-1]$. Let $\epsilon$ be a map in $\mathsf{M}_{k}^-$ and
\begin{center}
$\mathcal{B}:= (\mathcal{B}_m)_{\epsilon} = \begin{pmatrix}
  \frac{e_{\epsilon,+} \mathcal{B}_m e_{\epsilon,+}}{J_{\epsilon,+}} & e_{\epsilon,+} \mathcal{B}_m e_{\epsilon,-} \\
        0 & \frac{e_{\epsilon,-} \mathcal{B}_m e_{\epsilon,-}}{J_{\epsilon,-}} \end{pmatrix}$
\end{center}
as defined in Definition \ref{def:sd}. We denote by $e'_1,\ldots, e'_n$ the idempotents of $\mathcal{B}$ corresponding to $e_1,\ldots, e_n$ of $\mathcal{B}_m$, respectively. Then, we have a bijection between $\twosiltep \mathcal{B}_m$ and $\twosiltep \mathcal{B}$ by Proposition \ref{prop:Aepsilon}. Thus, we mainly observe the algebra $\mathcal{B}$ for our purpose.

Since $\epsilon(k)=\epsilon(k+1)=-1$, $e_k\mathcal{B}_me_{k+1}$ and $e_{k+1}\mathcal{B}_me_k$ are included in $e_{\epsilon,-}\mathcal{B}_me_{\epsilon,-}$, while they are also contained in $J_{\epsilon,-}$. By the definition of $\mathcal{B}$, we have $e'_k\mathcal{B}e'_{k+1} = 0 = e'_{k+1}\mathcal{B}e'_{k}$. In particular, there are no arrows between $k$ and $k+1$ in the quiver of $\mathcal{B}$. We observe that both $e':=\sum_{i=1}^ke_i$ and $f:=1-e'$ are central idempotents of $\mathcal{B}$, and hence $\mathcal{B}$ is decomposed into two blocks $\mathcal{B}':=e'\mathcal{B}e'$ and $\mathcal{B}'':=f\mathcal{B}f$. Thus, we have
\begin{equation} \label{eq:B-decomposed}
\twosiltep \mathcal{B} \overset{1-1}{\longleftrightarrow} \twosiltno_{\epsilon|_{[1,k]}} \mathcal{B}'\times \twosiltno_{\epsilon|_{[k+1,m]}}\mathcal{B}''
\end{equation}

On the one hand, we find that the block $\mathcal{B}'$ is isomorphic to $(\mathcal{B}_k)_{\epsilon|_{[1,k]}}$ for the algebra $\mathcal{B}_k$ with respect to the restriction $\epsilon|_{[1,k]}$. Then, the set $\twosiltno_{\epsilon|_{[1,k]}} \mathcal{B}'$ is bijection to $\twosiltno_{\epsilon|_{[1,k]}} \mathcal{B}_k$ by Proposition \ref{prop:Aepsilon}. Thus, $\twosiltno_{\epsilon|_{[1,k]}} \mathcal{B}'$ is finite by our induction hypothesis that $\mathcal{B}_k$ is $\tau$-tilting finite. On the other hand, since $\epsilon$ satisfies $\epsilon(k+1)=-1$ and $\epsilon(j)\neq \epsilon(j+1)$ for all $j\in [k+1,m-1]$, we have $e'_{k+\ell}\mathcal{B}e'_{k+r}\subseteq e'_{\epsilon,-}\mathcal{B}e'_{\epsilon,+} =0$ for any odd $\ell$ and even $r$ in $[1,m-k]$. Based on this fact, we deduce that the block $\mathcal{B}''$ is isomorphic to the path algebra $\mathbb{F}Q_{m-k}$ of $Q_{m-k}$, where $Q_{m-k}$ is the following quiver of Dynkin type $\mathbb{A}$ with alternating orientation:
\begin{center}
$\begin{xy}
   (0,0)*+{k+1}="1",  (20,0)*+{k+2}="2", (40,0)*+{k+3}="3",
    (52.5,0)*+{}="5", (65,0)*+{}="-2", (80,0)*+{m-1}="m-1", (95,0)*+{m}="m",
    {"2" \ar@{->} "1"}, {"2" \ar@{->} "3"}, {"3" \ar@{-}_{} "5"},
    {"5" \ar@{.}_{} "-2"},
    {"-2" \ar@{-} "m-1"},  {"m-1" \ar@{-}_{} "m"}
\end{xy}$,
\end{center}
which is $\tau$-tilting finite. (In fact, it is representation-finite.) Thus, we deduce that $\twosiltep \mathcal{B}$ is a finite set by (\ref{eq:B-decomposed}). Also, the set $\twosiltep \mathcal{B}_m$ is finite following Proposition \ref{prop:Aepsilon}. Since $\epsilon \in \mathsf{M}_{k}^-$ is arbitrary, the set $\twosiltno_{\mathsf{M}_k^-} \mathcal{B}_m$ is finite.

(2) We show that the set $\twosiltno_{\mathsf{M}_0^-} \mathcal{B}_m$ is finite. Let $J$ be the set of odd numbers in $[3,m]$. We define $\mathsf{M}_{J,-}:=\{\epsilon \in \mathsf{M}\mid \epsilon(J)=\{-1\}\}$ and $\mathsf{M}_{J,+}:=\{-\epsilon \mid \epsilon\in \mathsf{M}_{J,-}\}$. Since $\mathsf{M}_0^- \subseteq \mathsf{M}_{J,-}$ holds, it is enough to see the finiteness of the set $\twosiltno_{\mathsf{M}_{J,-}}\mathcal{B}_{m}$.

Let $\mu_{P_J}^-(\mathcal{B}_m)$ be the left silting mutation of $\mathcal{B}_m$ with respect to $(0\rightarrow P_J)$ for the projective module $P_J=\bigoplus_{j\in J}P_j$. Then, it is automatically tilting since silting complexes coincide with tilting complexes over symmetric algebras \cite[Example 2.8]{AI-silting}. Following Corollary \ref{prop:sd-tilting}, we obtain a bijection
\vspace{-0.2cm}
\begin{equation}\label{eq:tiltingB}
\twosiltno_{\mathsf{M}_{J,-}} \mathcal{B}_m  \overset{1-1}{\longleftrightarrow} \twosiltno_{\mathsf{M}_{J,+}} \mu_{J}^- (\mathcal{B}_m)
\vspace{-0.2cm}
\end{equation}
where $\mu_J^-(\mathcal{B}_m):= \mathrm{End}_{\Kb(\proj A)}(\mu_{P_J}^-(\mathcal{B}_m))$. We may look at the right hand side in (\ref{eq:tiltingB}). By direct calculation, $\mu_{P_J}^-(\mathcal{B}_m)$ is of the form $\bigoplus_{i=1}^n T_i$, where $T_i = (0\rightarrow P_i)$ for $i\notin J$ and
\begin{center}$
T_i :=\begin{cases}
    (P_3 \to P_1\oplus P_2\oplus P_4) & \text{for $i=3$}, \\
    (P_{i} \to P_{i-1}\oplus P_{i+1}) & \text{for $i= 5,7,\ldots, 2\lfloor\frac{m+1}{2}\rfloor-1$}, \\
    (P_m \to P_{m-1}) & \text{for odd $m$},
\end{cases}$
\end{center}
where $\lfloor x\rfloor$ is the largest integer less than or equal to $x$. Then, $\mu_J^-(\mathcal{B}_m)$ is presented by
\begin{center}
$\vcenter{\xymatrix@C=1.5cm@R=0.45cm{1\ar@<0.5ex>[dr]^{\alpha_1}\ar@<0.5ex>[dd]^{\alpha_3}&&&&&\\ &3\ar@<0.5ex>[ul]^{\beta_1}\ar@<0.5ex>[dl]^{\beta_2}\ar@<0.5ex>[r]^{\mu_3}&4\ar@<0.5ex>[l]^{\nu_3}\ar@<0.5ex>[r]^{\mu_4}&\cdots \ar@<0.5ex>[l]^{\nu_4}\ar@<0.5ex>[r]^{\mu_{m-2}}&m-1\ar@<0.5ex>[l]^{\nu_{m-2}}\ar@<0.5ex>[r]^{\ \ \mu_{m-1}}&m\ar@<0.5ex>[l]^{\ \ \nu_{m-1}}\\ 2\ar@<0.5ex>[ur]^{\alpha_2}\ar@<0.5ex>[uu]^{\beta_3}&&&&&}} \ \text{with}$

$\left \langle \begin{matrix}
\beta_1\alpha_3, \alpha_3\alpha_2, \alpha_2\beta_1, \alpha_1\beta_2,\beta_2\beta_3, \beta_3\alpha_1, \alpha_2\mu_3,  \alpha_1\mu_3, \nu_3\beta_1, \nu_3\beta_2\\
\alpha_1\beta_1-\alpha_3\beta_3, \alpha_2\beta_2-\beta_3\alpha_3, \beta_1\alpha_1- \mu_3\nu_3, \beta_2\alpha_2-\beta_1\alpha_1, \\
\mu_i\mu_{i+1},\nu_{i+1}\nu_i,\nu_i\mu_i-\mu_{i+1}\nu_{i+1}, 3\leqslant i\leqslant m-2
\end{matrix}\right \rangle.$
\end{center}

We describe the indecomposable projective $\mu_J^-(\mathcal{B}_m)$-modules $P''_i$ as follows,
\begin{center}
$P''_1\simeq\vcenter{\xymatrix@C=0.05cm@R=0.01cm{
  &1\ar@{-}[ddl]\ar@{-}[ddr]& \\\\
  2\ar@{-}[ddr] && 3 \ar@{-}[ddl]  \\\\
  &1 &
  }}$, \, $P''_2\simeq\vcenter{\xymatrix@C=0.05cm@R=0.01cm{
  &2\ar@{-}[ddl]\ar@{-}[ddr]& \\\\
  1\ar@{-}[ddr] && 3 \ar@{-}[ddl]  \\\\
  &2 &
  }}$, \, $P''_3\simeq\vcenter{\xymatrix@C=0.05cm@R=0.01cm{
  &3\ar@{-}[ddl]\ar@{-}[dd]\ar@{-}[ddr]& \\\\
  1\ar@{-}[ddr] & 2 \ar@{-}[dd] & 4 \ar@{-}[ddl]  \\\\
  &3 &
  }}$, \, $P''_i\simeq\vcenter{\xymatrix@C=0.05cm@R=0.01cm{
  &i\ar@{-}[ddl]\ar@{-}[ddr]& \\\\
  i-1\ar@{-}[ddr] && i+1 \ar@{-}[ddl]  \\\\
  &i &
  }}$, \, $P''_m\simeq\vcenter{\xymatrix@C=0.05cm@R=0.01cm{
  m\ar@{-}[dd]\\\\
  m-1 \ar@{-}[dd] \\\\
  m
  }}$,
\end{center}
where $4\leqslant i \leqslant m-1$. We find that $\mu_J^-(\mathcal{B}_m)$ is a symmetric algebra with radical cube zero and it is $\tau$-tilting finite following from \cite[Theorem 1.1]{AA}. In particular, both sets in (\ref{eq:tiltingB}) are finite as desired. Thus, we have shown that $\twosiltno_{\mathsf{M}_k^-} \mathcal{B}_m$ are finite for all $k \in \{0\}\cup[3,m-1]$. Therefore, $\mathcal{B}_m$ is $\tau$-tilting finite, and so are $\mathcal{D}'_m$ and $\mathcal{D}_m$.
\end{proof}

As we mentioned in Lemma \ref{prop:KLMN}, we can use a computer to find all $g$-vectors for $\mathcal{D}_m$. For the convenience of readers, we give some information as follows,
\begin{center}
\begin{tabular}{|c|c|c|c|c|c|c|c|c|}
\hline
$m $ & 3& 4 & 5& 6 &7 &8&9&10\\
\hline
$\#\twosilt \mathcal{D}_m$ & 28 & 114 &  456 & 1816&4012&13238 &45238&151568 \\ \hline
\end{tabular}.
\end{center}

\subsection{A complete classification of $\tau$-tilting finite Schur algebras} \label{sec:complete classification}
We recall that most cases are determined in \cite{W-schur} and the remaining cases are
\begin{center}
$(\star)\ \left\{\begin{aligned}
p&=2,n=2, r=8, 17, 19; \\
p&=2,n=3, r=4;\\
p&=2,n\geqslant 5,r=5; \\
p&\geqslant 5, n=2,p^2\leqslant r\leqslant p^2+p-1.
\end{aligned}\right.$
\end{center}
We have the following result. Recall that $\overline{S(n,r)}$ is the basic algebra of $S(n,r)$.
\begin{theorem}\label{thm:classification Schur}
Let $S(n,r)$ be the Schur algebra over $\mathbb{F}$.
\begin{description}\setlength{\itemsep}{-3pt}
\item[(1)] If $p=2$, then $S(2,8)$, $S(2,17)$ and $S(2,19)$ are $\tau$-tilting finite.
\item[(2)] If $p=2$, then $S(3,4)$ is $\tau$-tilting finite.
\item[(3)] If $p=2$, then $S(n,5)$ is $\tau$-tilting infinite for any $n\geqslant 5$.
\item[(4)] If $p\geqslant 5$, then $S(2,r)$ is $\tau$-tilting finite for any $p^2\leqslant r\leqslant p^2+p-1$.
\end{description}
\end{theorem}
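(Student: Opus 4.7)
The overall strategy for all four parts is to reduce to the basic algebra $\overline{S(n,r)}$ (since $\tau$-tilting finiteness is Morita invariant), decompose $\overline{S(n,r)}$ into indecomposable blocks, and apply Proposition~\ref{prop:number-product} so that $\tau$-tilting finiteness of $S(n,r)$ becomes equivalent to that of each of its blocks. The point of the theorem is thus to identify each block with an algebra whose $\tau$-tilting behavior is already known from Section~\ref{subsection-all algebras}.

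For parts~(1) and~(4), where $n=2$, I would use the block structure of $S(2,r)$ provided by Lemmas~\ref{EH-blocks-s(2,r)} and~\ref{EH-quiver-s(2,r)}. For each value of $r$ under consideration I apply the recursive formula of Lemma~\ref{EH-quiver-s(2,r)} to compute the numbers $n(v^s,v^t)$ of arrows of the Ext-quiver for every pair of Young-module labels $s,t\in\{\lambda_1-\lambda_2\mid (\lambda_1,\lambda_2)\in\Omega(2,r)\}$, and then group the vertices into connected components to read off the indecomposable blocks together with their numbers of simples. By Lemma~\ref{EH-blocks-s(2,r)}, each block is determined up to Morita equivalence by its number of simples. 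Blocks with $1,2,3,4,5$ simples in characteristic $2$ are known to be Morita equivalent to $\mathbb{F},\mathcal{A}_2,\mathcal{D}_3,\mathcal{K}_4,\mathcal{L}_5$ respectively, while for $p\geqslant 3$ a block with $m\leqslant p$ simples is Morita equivalent to $\mathcal{A}_m$ and a block with $p+1$ simples is Morita equivalent to $\mathcal{D}_{p+1}$. All of these algebras are $\tau$-tilting finite by Propositions~\ref{prop:Am},~\ref{prop:Dm} and Lemma~\ref{prop:KLMN}, so the claims in~(1) and~(4) follow.

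For part~(2), $S(3,4)$ at $p=2$ lies outside the Erdmann-Henke framework, so I would compute $\overline{S(3,4)}=\mathrm{End}_{\mathbb{F}G_4}\bigl(\bigoplus_{\lambda\in\Omega(3,4)}Y^\lambda\bigr)$ directly using the decomposition of permutation modules into Young modules as in \cite{Erdmann-finite} and~\cite{DEMN-tame schur}. Since there are only four partitions in $\Omega(3,4)$, I expect the resulting block decomposition to consist of blocks that are Morita equivalent to algebras already on the list $\mathbb{F},\mathcal{A}_2,\mathcal{K}_4,\mathcal{M}_4$; applying Proposition~\ref{prop:number-product} and Lemma~\ref{prop:KLMN} then finishes the argument. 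For part~(3), I would produce a $\tau$-tilting infinite truncation via Proposition~\ref{quotient and idempotent}: choose an idempotent $e$ of $\overline{S(n,5)}$ corresponding to a carefully chosen set of at least four Young modules and verify that the quiver of $e\overline{S(n,5)}e$, or of a suitable quotient thereof, contains the cyclic $2\times 2$ square subquiver of Proposition~\ref{infinite-square}. The threshold $n\geqslant 5$ gives access to all partitions of $5$ (in particular the fully refined one $(1,1,1,1,1)$), which is what should allow such a square to be realized.

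The main obstacle I expect is part~(3): pinpointing an idempotent truncation of $\overline{S(n,5)}$ whose quiver, after killing appropriate relations, really contains the forbidden square of Proposition~\ref{infinite-square}, and explaining cleanly why $n\geqslant 5$ is the precise threshold. This needs a concrete grip on the $\mathrm{Hom}$-spaces between Young modules for $S(n,5)$ in characteristic $2$. A secondary difficulty in parts~(1) and~(4) is the bookkeeping for the recursive computation of $n(v^s,v^t)$ when $r$ is not small (for example $r=19$ or $r=p^2+p-1$), but with careful tabulation this should be routine.
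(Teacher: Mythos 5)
Your treatment of parts (1), (2) and (4) is essentially the paper's argument: reduce to the basic algebra, identify the indecomposable blocks via Lemma \ref{EH-blocks-s(2,r)} and Lemma \ref{EH-quiver-s(2,r)} (for $n=2$) or via the known Morita equivalences from \cite{Erdmann-finite}, \cite{DEMN-tame schur}, \cite{W-schur} (the paper simply cites $\overline{S(2,8)}\simeq\mathcal{L}_5$, $\overline{S(2,17)}\simeq\mathcal{L}_5\oplus\mathcal{A}_2\oplus\mathbb{F}\oplus\mathbb{F}$, $\overline{S(2,19)}\simeq\mathcal{L}_5\oplus\mathcal{D}_3\oplus\mathbb{F}\oplus\mathbb{F}$ and $\overline{S(3,4)}\simeq\mathcal{M}_4$ rather than recomputing), and then invoke Propositions \ref{prop:Am}, \ref{prop:Dm} and Lemma \ref{prop:KLMN}. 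Do note that identifying a $5$-simple block with $\mathcal{L}_5$ (or a $(p+1)$-simple block with $\mathcal{D}_{p+1}$) is not a consequence of the quiver computation alone; it requires exhibiting one concrete Schur algebra realizing that block (here $S(2,8)$, resp.\ $S(2,p^2)$), which is exactly the external input the paper imports.

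The genuine gap is in part (3), and it is precisely where you flagged your own uncertainty. After reducing $S(n,5)$ for $n\geqslant 6$ to $S(5,5)$ (Morita equivalent since $n\geqslant r$), one has $\overline{S(5,5)}\simeq \mathcal{N}_5\oplus\mathcal{A}_2$ by \cite[Proposition 3.8]{Xi-schur}, and the quiver of $\mathcal{N}_5$ is a linear double quiver of type $A_5$: it contains no cyclic $2\times2$ square, and neither quotients (which can only delete arrows) nor idempotent truncations of a linear quiver can manufacture the four-cycle required by Proposition \ref{infinite-square} --- truncating at $e_2+e_3+e_4$ produces loops at the end vertices, not a square. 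So the strategy of hunting for the forbidden square subquiver fails here, and this is why the paper argues differently: it passes to $e\mathcal{N}_5e$ with $e=e_2+e_3+e_4$, kills $\alpha_3$ and $\beta_2$, and recognizes the result as the gentle algebra $\mathbb{F}(\circ\rightarrow\circ\leftarrow\circ)$ with loops $\alpha,\beta$ at the outer vertices and relations $\alpha^2=\beta^2=0$, which is $\tau$-tilting infinite by \cite[Proposition 3.3]{P-gentle}; Proposition \ref{quotient and idempotent} then transports the infiniteness back to $\mathcal{N}_5$ and hence to $S(n,5)$. Without some substitute for the square criterion (the gentle-algebra result, or another explicit infinite family of $\tau$-rigid modules), your proof of part (3) does not close.
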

\begin{proof}
(1) We recall from \cite[Section 4.1]{W-schur} that
\begin{center}
$\overline{S(2,8)}\simeq \mathcal{L}_5$, $\quad$$\overline{S(2,17)}\simeq \mathcal{L}_5 \oplus \mathcal{A}_2 \oplus \mathbb{F}\oplus \mathbb{F}$, $\quad$ $\overline{S(2,19)}\simeq \mathcal{L}_5\oplus \mathcal{D}_3 \oplus\mathbb{F} \oplus \mathbb{F}$.
\end{center}
By Proposition \ref{prop:Am}, Lemma \ref{prop:KLMN} and Proposition \ref{prop:Dm}, we have already known that $\mathcal{A}_2$, $\mathcal{D}_3$ and $\mathcal{L}_5$ are $\tau$-tilting finite. Thus, we conclude that $S(2,8)$, $S(2,17)$ and $S(2,19)$ are $\tau$-tilting finite. More precisely, the number of support $\tau$-tilting modules are $1656$, $39744$ and $185472$, respectively.

(2) We recall from \cite[3.6]{DEMN-tame schur} that $S(3,4)$ over $p=2$ is Morita equivalent to $\mathcal{M}_4$. Then the assertion immediately follows from Lemma \ref{prop:KLMN}.

(3) By the definition of Schur algebras, $S(n,5)$ with $n\geqslant 6$ is always Morita equivalent to $S(5,5)$ over the same field. It is shown in \cite[Proposition 3.8]{Xi-schur} that $S(5,5)$ over $p=2$ is Morita equivalent to $\mathcal{N}_5\oplus \mathcal{A}_2$. Since $\mathcal{N}_5$ is $\tau$-tilting infinite by Lemma \ref{prop:KLMN}, so is $S(n,5)$ with $n\geqslant 5$ over $p=2$.

(4) We look at the case $S(2,r)$ with $p^2\leqslant r\leqslant p^2+p-1$ over $p\geqslant 5$. We use Lemma \ref{EH-quiver-s(2,r)} to understand the quiver of the basic algebra $\overline{S(2,r)}$ of $S(2,r)$. We notice that the number of vertices in the quiver increases regularly when $r$ increases, see Table \ref{quiver-S(2,r)}.
\begin{table}
\centering
\caption{the quiver of $\overline{S(2,r)}$ over $p\geqslant 5$}\label{quiver-S(2,r)}
\begin{center}
$\xymatrix@C=0.7cm@R=0.7cm{
&2p-1&&4p-1 & &p^2-p-1\ar@<0.5ex>[r]^{\ }&p^2+p-1\ar@<0.5ex>[l]_{\ }\\
0\ar@<0.5ex>[r]^{\ }&2p-2 \ar@<0.5ex>[r]^{\ }\ar@<0.5ex>[l]_{\ }&2p \ar@<0.5ex>[r]^{\ }\ar@<0.5ex>[l]_{\ }&4p-2\ar@{.}[r]\ar@<0.5ex>[l]_{\ }&p^2-3p\ar@<0.5ex>[d]_{\ }\ar@<0.5ex>[r]^{\ }&p^2-p-2\ar@<0.5ex>[r]^{\ }\ar@<0.5ex>[l]_{\ }\ar@<0.5ex>[d]_{\ }&p^2-p\ar@<0.5ex>[l]_{\ }\\
 & & & &p^2+p\ar@<0.5ex>[r]^{\ }\ar@<0.5ex>[u]^{\ }&p^2+p-2\ar@<0.5ex>[u]^{\ }\ar@<0.5ex>[l]_{\ }& \\
1\ar@<0.5ex>[r]^{\ }&2p-3 \ar@<0.5ex>[r]^{\ }\ar@<0.5ex>[l]_{\ }&2p+1 \ar@<0.5ex>[r]^{\ }\ar@<0.5ex>[l]_{\ }&4p-3\ar@<0.5ex>[l]_{\ }\ar@{.}[r]&p^2-3p+1\ar@<0.5ex>[d]_{\ }\ar@<0.5ex>[r]^{\ }&p^2-p-3\ar@<0.5ex>[r]^{\ }\ar@<0.5ex>[l]_{\ }\ar@<0.5ex>[d]_{\ }&p^2-p+1\ar@<0.5ex>[l]_{\ }\\
 & & & &p^2+p+1\ar@<0.5ex>[r]^{\ }\ar@<0.5ex>[u]^{\ }&p^2+p-3\ar@<0.5ex>[u]^{\ }\ar@<0.5ex>[l]_{\ }& \\
2\ar@<0.5ex>[r]^{\ }&2p-4 \ar@<0.5ex>[r]^{\ }\ar@<0.5ex>[l]_{\ }&2p+2 \ar@<0.5ex>[r]^{\ }\ar@<0.5ex>[l]_{\ }&4p-4\ar@<0.5ex>[l]_{\ }\ar@{.}[r]&p^2-3p+2\ar@<0.5ex>[d]_{\ }\ar@<0.5ex>[r]^{\ }&p^2-p-4\ar@<0.5ex>[r]^{\ }\ar@<0.5ex>[l]_{\ }\ar@<0.5ex>[d]_{\ }&p^2-p+2\ar@<0.5ex>[l]_{\ }\\
 & & & &p^2+p+2\ar@<0.5ex>[r]^{\ }\ar@<0.5ex>[u]^{\ }&p^2+p-4\ar@<0.5ex>[u]^{\ }\ar@<0.5ex>[l]_{\ }& \\
\vdots&\vdots&\vdots&\vdots&\vdots&\vdots&\vdots\\
p-3\ar@<0.5ex>[r]^{\ }&p+1 \ar@<0.5ex>[r]^{\ }\ar@<0.5ex>[l]_{\ }&3p-3 \ar@<0.5ex>[r]^{\ }\ar@<0.5ex>[l]_{\ }&3p+1\ar@<0.5ex>[l]_{\ }\ar@{.}[r]&p^2-2p-3\ar@<0.5ex>[r]^{\ }&p^2-2p+1\ar@<0.5ex>[r]^{\ }\ar@<0.5ex>[l]_{\ }\ar@<0.5ex>[d]_{\ }&p^2-3\ar@<0.5ex>[l]_{\ }\\
 & & & & &p^2+1\ar@<0.5ex>[u]^{\ }&\\
p-2\ar@<0.5ex>[r]^{\ }&p \ar@<0.5ex>[r]^{\ }\ar@<0.5ex>[l]_{\ }&3p-2 \ar@<0.5ex>[r]^{\ }\ar@<0.5ex>[l]_{\ }&3p\ar@<0.5ex>[l]_{\ }\ar@{.}[r]&p^2-2p-2\ar@<0.5ex>[r]^{\ }&p^2-2p\ar@<0.5ex>[d]_{\ }\ar@<0.5ex>[r]^{\ }\ar@<0.5ex>[l]_{\ }&p^2-2\ar@<0.5ex>[l]_{\ }\\
 & & & & &p^2\ar@<0.5ex>[u]^{\ }&p^2-1  \\
p-1 & &3p-1& &p^2-2p-1& &
}$
\end{center}
\end{table}

Let $\mathcal{B}$ be a block of $\overline{S(2,r)}$. By looking at the quiver of $\overline{S(2,r)}$ as displayed in Table \ref{quiver-S(2,r)}, we find that if $p^2\leqslant r\leqslant p^2+p-1$, then $1\leqslant |\mathcal{B}|\leqslant p+1$. We have the following observations.
\begin{itemize}\setlength{\itemsep}{-3pt}
  \item For $1\leqslant m\leqslant p$, we can find a block $\mathcal{B}'$ of $S(2,r')$ with $1\leqslant r'\leqslant p^2-1$, such that $|\mathcal{B}'|=m$, see \cite[Proposition 5.1]{Erdmann-finite}. Then $\mathcal{B}'$ is Morita equivalent to $\mathbb{F}$ if $m=1$ and $\mathcal{B}'$ is Morita equivalent to $\mathcal{A}_{m}$ if $2\leqslant m$ by Proposition \ref{summary} and Proposition \ref{rep-finite-block}.
  \item We notice from \cite[3.4]{DEMN-tame schur} that $S(2,p^2)$ has a block $\mathcal{B}'$ satisfying $|\mathcal{B}'|=p+1$, which is Morita equivalent to the algebra $\mathcal{D}_{p+1}$.
\end{itemize}
By Lemma \ref{EH-blocks-s(2,r)}, we have $\mathcal{B}\simeq \mathbb{F}$ if $|\mathcal{B}|=1$, $\mathcal{B}\simeq \mathcal{A}_{|\mathcal{B}|}$ if $2\leqslant |\mathcal{B}|\leqslant p$ and $\mathcal{B}\simeq \mathcal{D}_{p+1}$ if $|\mathcal{B}|=p+1$. We conclude that $\overline{S(2,r)}$ with $p^2\leqslant r\leqslant p^2+p-1$ contains only $\mathbb{F}$, $\mathcal{A}_m$ ($2\leqslant m\leqslant p$) and $\mathcal{D}_{p+1}$ as blocks. Since $\mathcal{A}_m$ is obviously $\tau$-tilting finite, the problem in this case is reduced to the $\tau$-tilting finiteness of $\mathcal{D}_{p+1}$. We get the assertion since we have already shown in Proposition \ref{prop:Dm} that $\mathcal{D}_{p+1}$ is $\tau$-tilting finite.
\end{proof}

Now, the $\tau$-tilting finiteness of $S(n,r)$ is completely determined. As a summary, we provide a complete list of $\tau$-tilting finite Schur algebras in Appendix \ref{appendix}. We are able to show the number of support $\tau$-tilting modules for any $\tau$-tilting finite $S(n,r)$ over $p=2,3$, see also Appendix \ref{appendix}.

\subsection{$\tau$-tilting finiteness of blocks of Schur algebras}
As we have seen in the previous subsection, $\tau$-tilting infiniteness of a Schur algebra does not imply $\tau$-tilting infiniteness of its blocks. For instance, the Schur algebra $S(5,5)$ over $p=2$ is $\tau$-tilting infinite but it contains a $\tau$-tilting finite block which is Morita equivalent to $\mathcal{A}_2$, see Theorem \ref{thm:classification Schur} (3). Thus, it is natural to ask the $\tau$-tilting finiteness for blocks of Schur algebras.
\begin{problem*}\label{problem 4.1}
Give a complete classification of $\tau$-tilting finite blocks of Schur algebras.
\end{problem*}

We give a partial answer to the above problem, that is, we completely determine the $\tau$-tilting finiteness for blocks of $S(2,r)$. Many parts of the result have been given in the previous subsection.
\begin{theorem}\label{thm:S_2r}
Let $\mathcal{B}$ be a block of $S(2,r)$. 
\begin{description}\setlength{\itemsep}{-3pt}
\item[(1)] If $p=2$, then $\mathcal{B}$ is $\tau$-tilting finite if and only if $\mathcal{B}$ is Morita equivalent to one of $\mathbb{F}$, $\mathcal{A}_2$, $\mathcal{D}_3$, $\mathcal{K}_4$ and $\mathcal{L}_5$.
\item[(2)] If $p\geqslant 3$, then $\mathcal{B}$ is $\tau$-tilting finite if and only if $\mathcal{B}$ is Morita equivalent to one of $\mathbb{F}$, $\mathcal{A}_m (2\leqslant m \leqslant p)$ and $\mathcal{D}_{p+1}$.
\end{description}
\end{theorem}
\begin{proof}
We have already shown in the previous section that $\mathcal{B}$ is $\tau$-tilting finite if $\mathcal{B}$ is Morita equivalent to one of $\mathbb{F}$, $\mathcal{A}_m (2\leqslant m \leqslant p)$, $\mathcal{K}_4$, $\mathcal{L}_5$ and $\mathcal{D}_{p+1}$. Next, we show the necessity. We denote by $\overline{S(2,r)}$ the basic algebra of $S(2,r)$ and we assume that $\mathcal{B}$ is $\tau$-tilting finite. It is obvious from Lemma \ref{EH-blocks-s(2,r)} that $\mathcal{B}$ is Morita equivalent to $\mathbb{F}$, $\mathcal{A}_2$ if $|\mathcal{B}|=1,2$, respectively. In the following, suppose $|\mathcal{B}|\geqslant 3$.

(1) Let $p=2$. We recall from \cite{Erdmann-finite} that $\overline{S(2,4)}$ is isomorphic to $\mathcal{D}_3$ and $\overline{S(2,6)}$ is isomorphic to $\mathcal{K}_4$. Also, it is given by \cite{W-schur} that $\overline{S(2,8)}$ is isomorphic to $\mathcal{L}_5$. By Lemma \ref{EH-blocks-s(2,r)}, we conclude that $\mathcal{B}$ is Morita equivalent to $\mathcal{D}_3$, $\mathcal{K}_4$, $\mathcal{L}_5$ if $|\mathcal{B}|=3, 4, 5$, respectively. By using Lemma \ref{EH-quiver-s(2,r)}, we find that the quiver of $\overline{S(2,10)}$ is of form
\begin{center}
$\vcenter{\xymatrix@C=1cm@R=0.7cm{ &&\circ \ar@<0.5ex>[d]^{\ }\ar@<0.5ex>[r]^{\ }&\circ \ar@<0.5ex>[l]^{\ }\ar@<0.5ex>[d]^{\ }\\ \circ \ar@<0.5ex>[r]^{\ } &\circ \ar@<0.5ex>[r]^{\ }\ar@<0.5ex>[l]^{\ } &\circ \ar@<0.5ex>[u]^{\ }\ar@<0.5ex>[l]^{\ }\ar@<0.5ex>[r]^{\ }&\circ \ar@<0.5ex>[u]^{\ }\ar@<0.5ex>[l]^{\ }}}$
\end{center}
with $6$ vertices and it contains a $\tau$-tilting infinite subquiver, see Proposition \ref{infinite-square}. Also, according to Lemma \ref{EH-quiver-s(2,r)}, we find that if $|\mathcal{B}|>6$, then the quiver of $\mathcal{B}$ is obtained by adding extra vertices to the above quiver. Hence, $\mathcal{B}$ is $\tau$-tilting infinite if $|\mathcal{B}|\geqslant 6$.

(2) Let $p=3$. We recall from \cite{Erdmann-finite} that $\overline{S(2,6)}$ is isomorphic to $\mathcal{A}_3\oplus \mathbb{F}$ and $\overline{S(2,9)}$ is isomorphic to $\mathcal{D}_4\oplus \mathbb{F}$. By Lemma \ref{EH-blocks-s(2,r)}, we conclude that $\mathcal{B}$ is Morita equivalent to $\mathcal{A}_3$ if $|\mathcal{B}|=3$ and $\mathcal{B}$ is Morita equivalent to $\mathcal{D}_4$ if $|\mathcal{B}|=4$. Similarly, by using Lemma \ref{EH-quiver-s(2,r)}, we find that the quiver of the principal block of $\overline{S(2,12)}$ is of form
\begin{center}
$\vcenter{\xymatrix@C=1cm@R=0.7cm{&\circ \ar@<0.5ex>[d]^{\ }\ar@<0.5ex>[r]^{\ }&\circ \ar@<0.5ex>[l]^{\ }\ar@<0.5ex>[d]^{\ }\\\circ \ar@<0.5ex>[r]^{\ }&\circ \ar@<0.5ex>[u]^{\ }\ar@<0.5ex>[l]^{\ }\ar@<0.5ex>[r]^{\ }&\circ \ar@<0.5ex>[u]^{\ }\ar@<0.5ex>[l]^{\ }}}$
\end{center}
with $5$ vertices and it contains a $\tau$-tilting infinite subquiver. Therefore, $\mathcal{B}$ is $\tau$-tilting infinite if $|\mathcal{B}|\geqslant 5$.

(3) Let $p\geqslant 5$. We have explained in the proof of Theorem \ref{thm:classification Schur} that if $|\mathcal{B}|=m$ with $3\leqslant m\leqslant p$, then $\mathcal{B}$ is Morita equivalent to $\mathcal{A}_m$; if $|\mathcal{B}|=p+1$, then $\mathcal{B}$ is Morita equivalent to $\mathcal{D}_{p+1}$. It is not difficult to find in Table \ref{quiver-S(2,r)} that if $|\mathcal{B}|\geqslant p+2$, then $\mathcal{B}$ must contain
\begin{center}
$\xymatrix@C=1cm@R=0.7cm{\circ \ar@<0.5ex>[d]^{\ }\ar@<0.5ex>[r]^{\ }&\circ \ar@<0.5ex>[l]^{\ }\ar@<0.5ex>[d]^{\ }\\ \circ \ar@<0.5ex>[u]^{\ }\ar@<0.5ex>[r]^{\ }&\circ \ar@<0.5ex>[u]^{\ }\ar@<0.5ex>[l]^{\ }}$
\end{center}
as a subquiver. This implies that $\mathcal{B}$ is $\tau$-tilting infinite if $|\mathcal{B}|\geqslant p+2$.
\end{proof}

\section{Strictly wild Schur algebras}
A finite-dimensional $\mathbb{F}$-algebra $A$ is said to be \emph{strictly wild} if there exists a fully faithful exact $\mathbb{F}$-linear functor from $\mod B$ to $\mod A$, for any finite-dimensional algebra $B$. 
It is obvious that a strictly wild algebra is wild. While a nice characterization of the strictly wildness of hereditary algebras is known, it is not easy to check the strictly wildness for an arbitrary bound quiver algebra. We review the known characterization as follows. 

\begin{proposition}[{\cite[XVIII Corollary 4.7]{SS07}}]\label{prop:Dm}
A path algebra $\mathbb{F}Q$ is strictly wild if and only if the underlying graph of $Q$ is neither Dynkin nor Euclidean.
\end{proposition}

A right $A$-module $M$ is called a \emph{brick} (or \emph{Schurian module}) if $\mathrm{End}_A M\simeq\mathbb{F}$. Then, $A$ is said to be \emph{brick-finite} if it has finitely many isomorphism classes of bricks, and \emph{brick-infinite} otherwise. A brick-infinite algebra $A$ is called \emph{brick-tame} if all but finitely many bricks can be organized in finitely many one-parameter families, in each dimension, and $A$ is called \emph{brick-wild} otherwise. We then have the following observation on strictly wildness for an arbitrary bound quiver algebra.

\begin{proposition}
Let $A$ be a bound quiver algebra. If $A$ is $\tau$-tilting finite, then $A$ is not strictly wild.
\end{proposition}
\begin{proof}
This relies on the so-called \emph{brick-$\tau$-rigid correspondence} introduced in \cite{DIJ-tau-tilting-finite}, that is, there is a bijection between indecomposable $\tau$-rigid modules (up to isomorphism) and bricks $M$ in $\mod A$ such that the smallest torsion class in $\mod A$ containing $M$ is functorially finite. It turns out that $A$ is $\tau$-tilting finite if and only if $A$ is brick-finite. If $A$ is strictly wild, then $A$ is brick-wild as mentioned in \cite[Remark 1]{CC-Schur-tame}, and hence $A$ is $\tau$-tilting infinite.
\end{proof}

We note that strictly wildness behaves well under quotient and idempotent truncation techniques. If $S(n,r)$ is strictly wild, then both $S(N,r)$ for $N>n$ and $S(n,r+n)$ are strictly wild. In the following, we look at some edge cases where all data can be found in \cite{W-schur} and in this paper.
\begin{description}\setlength{\itemsep}{-3pt}
\item[(1)] Set $p=2$. Then, $S(2,r)$ with $r=6,8,13,15,17,19$, $S(3,r)$ with $r=4,5$, $S(4,5)$, are wild and $\tau$-tilting finite, and hence each of them is not strictly wild; $S(2,r)$ with $r=10,21$, $S(3,r)$ with $r=6,7$, $S(4,4)$, are strictly wild since each of them contains 
\begin{center}
$\Delta_5: \quad \vcenter{\xymatrix@C=1cm@R=0.7cm{&\circ \ar@<0.5ex>[d] \ar@<0.5ex>[r] &\circ \ar@<0.5ex>[l] \ar@<0.5ex>[d] \\\circ \ar@<0.5ex>[r] &\circ \ar@<0.5ex>[u] \ar@<0.5ex>[l] \ar@<0.5ex>[r] &\circ \ar@<0.5ex>[u] \ar@<0.5ex>[l] }}$
\end{center}
as a subquiver; $S(3,8)$ is strictly wild since it contains 
\begin{center}
$\Delta_7: \quad \vcenter{\xymatrix@C=1.2cm@R=0.7cm{\circ \ar@<0.5ex>[d]  &\circ  \ar@<0.5ex>[d] \\
\circ \ar@<0.5ex>[u] \ar@<0.5ex>[d] \ar@<0.5ex>[r] &\circ \ar@<0.5ex>[u] \ar@<0.5ex>[d]\ar@<0.5ex>[l]\ar@<0.5ex>[r] &\circ\ar@<0.5ex>[l]\\
\circ\ar@<0.5ex>[u]&\circ\ar@<0.5ex>[u]
}}$
\end{center}
as a subquiver; $S(5,5)$ is wild and $\tau$-tilting infinite, the basic algebra of $S(5,5)$ is $\mathcal{N}_5\oplus \mathcal{A}_2$ whose quiver is displayed as 
\begin{center}
$\xymatrix@C=1cm@R=0.3cm{\circ\ar@<0.5ex>[r] &\circ\ar@<0.5ex>[l] \ar@<0.5ex>[r] &\circ\ar@<0.5ex>[l] \ar@<0.5ex>[r] &
\circ\ar@<0.5ex>[l] \ar@<0.5ex>[r] &\circ\ar@<0.5ex>[l]&
\circ \ar@<0.5ex>[r] &\circ\ar@<0.5ex>[l]}$,
\end{center}
our method is invalid in this case.

\item[(2)] Set $p=3$. Then, $S(2,r)$ with $r=12,13$, $S(3,6)$, $S(4,r)$ with $r=7,8$, are strictly wild since each of them contains $\Delta_5$ as a subquiver; using quotient and idempotent truncation techniques, one may find $\Delta_5$ as a subquiver in the quiver of certain idempotent truncation (resp. quotient) of $S(4,10)$ (resp. $S(4,10)$), hence these two are strictly wild; $S(3,r)$ with $r=10,11$ is wild and $\tau$-tilting infinite, our method is invalid similar to the case $S(5,5)$ over $p=2$.

\item[(3)] Set $p\ge 5$. Then, $S(2,r)$ with $p^2\leqslant r\leqslant p^2+p-1$ is wild and $\tau$-tilting finite, so it is not strictly wild; $S(2,p+p^2)$ and $S(3,r)$ with $r=2p, 2p+1, 2p+2$ are strictly wild since both of them contain $\Delta_5$ as a subquiver.
\end{description}
In summary, we have known the strictly wildness of $S(n,r)$ for any characteristic $p\ge 2$, except for $p=2, n\ge 5, r=5$ and $p=3, n=3, r=10,11$.

\appendix
\section{A complete list of $\tau$-tilting finite Schur algebras}\label{appendix}
Since $S(n,r)$ is an idempotent truncation (resp. a quotient) of $S(N,r)$ for any $N>n$ (resp. $S(n,n+r)$), both $S(N,r)$ and $S(n,n+r)$ are $\tau$-tilting infinite if  $S(n,r)$ is $\tau$-tilting infinite. Thus, we only need to consider the cases with small $n$ and $r$. It needs two steps to give a complete list of $\tau$-tilting finite Schur algebras. The first step is to construct the basic algebra of $S(n,r)$ with small $n$ and $r$, while the second step is to check the $\tau$-tilting finiteness of these basic algebras. One may gradually enlarge $n$ and $r$, and repeat the second step until one can find a complete classification.

As we mentioned in the introduction, there is nothing new in this paper toward the first step since the work in \cite{Erdmann-finite}, \cite{DEMN-tame schur} and \cite{W-schur} provide enough materials for this paper. We have already introduced most of the needed algebras in the previous section, but we still need the following three cases. We recall from \cite{Erdmann-finite} that
\begin{itemize}\setlength{\itemsep}{-3pt}
\item $\mathcal{U}_4:=\mathbb{F}Q/I$ is the bound quiver algebra given by
\begin{center}
$Q: \xymatrix@C=1cm{1\ar@<0.5ex>[r]^{\alpha_1}&2\ar@<0.5ex>[l]^{\beta_1}\ar@<0.5ex>[r]^{\alpha_2}&3\ar@<0.5ex>[l]^{\beta_2}\ar@<0.5ex>[r]^{\alpha_3}&
4\ar@<0.5ex>[l]^{\beta_3}}$ and $I: \left \langle \begin{matrix}
\alpha_1\beta_1, \alpha_2\beta_2, \alpha_1\alpha_2\alpha_3,\beta_3\beta_2\beta_1,\alpha_3\beta_3-\beta_2\alpha_2
\end{matrix}\right \rangle$;
\end{center}

\item $\mathcal{R}_4:=\mathbb{F}Q/I$ is the bound quiver algebra given by
\begin{center}
$Q:\xymatrix@C=1cm{1\ar@<0.5ex>[r]^{\alpha_1}&2\ar@<0.5ex>[l]^{\beta_1}\ar@<0.5ex>[r]^{\alpha_2}&3\ar@<0.5ex>[l]^{\beta_2}
\ar@<0.5ex>[r]^{\alpha_3}&4\ar@<0.5ex>[l]^{\beta_3}}$ and $I: \left \langle\begin{matrix}
\alpha_1\beta_1,\alpha_1\alpha_2, \beta_2\beta_1,\\
\alpha_2\beta_2-\beta_1\alpha_1, \alpha_3\beta_3-\beta_2\alpha_2
\end{matrix}\right \rangle$;
\end{center}

\item $\mathcal{H}_4:=\mathbb{F}Q/I$ is the bound quiver algebra given by
\begin{center}
$Q: \xymatrix@C=1cm@R=0.8cm{1\ar@<0.5ex>[r]^{\alpha_1}&2\ar@<0.5ex>[l]^{\beta_1}\ar@<0.5ex>[d]^{\beta_2}\ar@<0.5ex>[r]^{\alpha_3}&4\ar@<0.5ex>[l]^{\beta_3}\\
&3\ar@<0.5ex>[u]^{\alpha_2}&}$ and $I:  \left \langle \begin{matrix}
\alpha_1\beta_1,\alpha_1\beta_2,\alpha_2\beta_1,\alpha_2\beta_2,\alpha_1\alpha_3,\\
\beta_3\beta_1, \alpha_3\beta_3-\beta_1\alpha_1-\beta_2\alpha_2
\end{matrix}\right \rangle$.
\end{center}
\end{itemize}

Toward the second step, we get some new results in this paper. For the convenience of readers, we recall some facts as follows. Here, the numbers for $\mathcal{R}_4, \mathcal{H}_4, \mathcal{U}_4$ have been given in \cite[Lemma 3.3]{W-schur} and \cite[Proposition 4.4]{W-schur}.
\begin{center}
\begin{tabular}{|c|c|c|c|c|c|c|c|c|c|c|c|c|}
\hline
$A$ &$\mathbb{F}$ &$\mathcal{A}_2$ & $\mathcal{A}_3$ & $\mathcal{D}_3$ &$\mathcal{R}_4$&$\mathcal{H}_4$&$\mathcal{D}_4$ & $\mathcal{K}_4$ &$\mathcal{U}_4$ & $\mathcal{M}_4$ & $\mathcal{L}_5$ \\
\hline
$\#\stautilt A$ &2& 6&20&28&88&96&114&136 &136 &  152  &  1656 \\ \hline
\end{tabular}
\end{center}
In fact, one may refer to \href{https://infinite-wang.github.io/Notes/}{https://infinite-wang.github.io/Notes/} for a complete list of $g$-vectors of the above algebras.

We use some visual tables to display the complete classification. In the following tables, we claim that the color \textbf{blue} means $\tau$-tilting finite, the color \textbf{orange} means $\tau$-tilting infinite, the capital letter \textbf{S} means semi-simple, the capital letter \textbf{F} means representation-finite, the capital letter \textbf{T} means tame and the capital letter \textbf{W} means wild. In particular, we use Proposition \ref{prop:number-product} to calculate the number $\#\stautilt S(n,r)$ for a $\tau$-tilting finite $S(n,r)$.

\subsection{The $\tau$-tilting finiteness of $S(n,r)$ over $p=2$}
\begin{center}
\scalebox{0.8}{
\begin{tabular}{|c|c|c|c|c|c|c|c|c|c|c|c|c|c|c|c|c|c|c|c|c|c|c}
\hline
\diagbox[height=2em]{$n$}{$r$}&1&2&3&4&5&6&7&8&9&10&11&12&13&14&15&16&17&18&19&20&21&$\cdots$\\ \hline
2&\cellcolor{myblue}S&\cellcolor{myblue}F&\cellcolor{myblue}S&\cellcolor{myblue}T&\cellcolor{myblue}F&\cellcolor{myblue}W&\cellcolor{myblue}F&\cellcolor{myblue}W
&\cellcolor{myblue}T&\cellcolor{myred}W&\cellcolor{myblue}T&\cellcolor{myred}W&\cellcolor{myblue}W&\cellcolor{myred}W&\cellcolor{myblue}W&\cellcolor{myred}W&\cellcolor{myblue}W&\cellcolor{myred}W&\cellcolor{myblue}W&
\cellcolor{myred}W&\cellcolor{myred}W&\cellcolor{myred}$\cdots$ \\ \hline
\end{tabular}}
\end{center}
\begin{center}
\renewcommand\arraystretch{1}
\begin{tabular}{|c|c|c|c|c|c|c|c|c|c|c|c|c|c|c}
\hline
\diagbox[height=2em]{$n$}{$r$}&1&2&3&4&5&6&7&8&9&10&11&12&13&$\cdots$\\ \hline
3&\cellcolor{myblue}S&\cellcolor{myblue}F&\cellcolor{myblue}F&\cellcolor{myblue}W&\cellcolor{myblue}W&\cellcolor{myred}W&\cellcolor{myred}W&\cellcolor{myred}W&\cellcolor{myred}W&\cellcolor{myred}W&
\cellcolor{myred}W&\cellcolor{myred}W&\cellcolor{myred}W&\cellcolor{myred}$\cdots$\\ \hline
4&\cellcolor{myblue}S&\cellcolor{myblue}F&\cellcolor{myblue}F&\cellcolor{myred}W&\cellcolor{myblue}W&\cellcolor{myred}W&\cellcolor{myred}W&\cellcolor{myred}W&\cellcolor{myred}W&\cellcolor{myred}W&
\cellcolor{myred}W&\cellcolor{myred}W&\cellcolor{myred}W&\cellcolor{myred}$\cdots$\\ \hline
5&\cellcolor{myblue}S&\cellcolor{myblue}F&\cellcolor{myblue}F&\cellcolor{myred}W&\cellcolor{myred}W&\cellcolor{myred}W&\cellcolor{myred}W&\cellcolor{myred}W&\cellcolor{myred}W&\cellcolor{myred}W&
\cellcolor{myred}W&\cellcolor{myred}W&\cellcolor{myred}W&\cellcolor{myred}$\cdots$\\ \hline
6&\cellcolor{myblue}S&\cellcolor{myblue}F&\cellcolor{myblue}F&\cellcolor{myred}W&\cellcolor{myred}W&\cellcolor{myred}W&\cellcolor{myred}W&\cellcolor{myred}W&\cellcolor{myred}W&\cellcolor{myred}W&
\cellcolor{myred}W&\cellcolor{myred}W&\cellcolor{myred}W&\cellcolor{myred}$\cdots$\\ \hline
$\vdots$&\cellcolor{myblue}$\vdots$&\cellcolor{myblue}$\vdots$&\cellcolor{myblue}$\vdots$&\cellcolor{myred}$\vdots$&\cellcolor{myred}$\vdots$&\cellcolor{myred}$\vdots$&
\cellcolor{myred}$\vdots$&\cellcolor{myred}$\vdots$&\cellcolor{myred}$\vdots$&\cellcolor{myred}$\vdots$&\cellcolor{myred}$\vdots$&\cellcolor{myred}
$\vdots$&\cellcolor{myred}$\vdots$&\cellcolor{myred}$\ddots$
\end{tabular}
\end{center}
We list all $\tau$-tilting finite $S(n,r)$ over $p=2$ as follows.
\begin{center}
\renewcommand\arraystretch{1}
\begin{tabular}{|c|c|c|c|}
\hline
 $S(n,r)$ & The basic algebra of $S(n,r)$ & Morita equivalence &$\#\stautilt S(n,r)$\\
\hline
 $S(2,1)$ & $\mathbb{F}$ &  $\simeq S(n,1)$ for any $n\geqslant3$ &2 \\
 $S(2,2)$ & $\mathcal{A}_2$ &  $\simeq S(n,2)$ for any $n\geqslant3$ &6 \\
 $S(2,3)$ & $\mathbb{F}\oplus \mathbb{F}$ &  &4  \\
 $S(2,4)$ & $\mathcal{D}_3$ &   &28 \\
 $S(2,5)$ & $\mathcal{A}_2\oplus \mathbb{F}$  & &12  \\
 $S(2,6)$ & $\mathcal{K}_4$  &   &136 \\
 $S(2,7)$ & $\mathcal{A}_2\oplus \mathbb{F}\oplus \mathbb{F}$  &  &24  \\
 $S(2,8)$ & $\mathcal{L}_5$  &  & 1656 \\
 $S(2,9)$ & $\mathcal{D}_3\oplus\mathbb{F}\oplus \mathbb{F}$ & &112  \\
 $S(2,11)$ & $\mathcal{D}_3\oplus\mathcal{A}_2\oplus  \mathbb{F}$ && 336   \\
 $S(2,13)$ & $\mathcal{K}_4\oplus \mathcal{A}_2\oplus \mathbb{F}$ && 1632   \\
 $S(2,15)$ & $\mathcal{K}_4\oplus \mathcal{A}_2\oplus \mathbb{F}\oplus \mathbb{F}$ &  &3264  \\
 $S(2,17)$ & $\mathcal{L}_5\oplus \mathcal{A}_2\oplus \mathbb{F}\oplus \mathbb{F}$ &  &39744  \\
 $S(2,19)$ & $\mathcal{L}_5\oplus \mathcal{D}_3\oplus \mathbb{F}\oplus \mathbb{F}$ &  &185472  \\
 $S(3,3)$ & $\mathcal{A}_2\oplus \mathbb{F}$ & $\simeq S(n,3)$ for any $n\geqslant4$ &12 \\
 $S(3,4)$ & $\mathcal{M}_4$ &  &152  \\
 $S(3,5)$ & $\mathcal{U}_4$ & &136   \\
 $S(4,5)$ & $\mathcal{U}_4\oplus \mathcal{A}_2$ & &816 \\
 \hline
\end{tabular}
\end{center}

\subsection{The $\tau$-tilting finiteness of $S(n,r)$ over $p=3$}
\begin{center}
\renewcommand\arraystretch{1}
\begin{tabular}{|c|c|c|c|c|c|c|c|c|c|c|c|c|c|c}
\hline
\diagbox[height=2em]{$n$}{$r$}&1&2&3&4&5&6&7&8&9&10&11&12&13&$\cdots$\\ \hline
2&\cellcolor{myblue}S&\cellcolor{myblue}S&\cellcolor{myblue}F&\cellcolor{myblue}F&\cellcolor{myblue}F&\cellcolor{myblue}F&\cellcolor{myblue}F&\cellcolor{myblue}F&
\cellcolor{myblue}T&\cellcolor{myblue}T&\cellcolor{myblue}T&\cellcolor{myred}W&\cellcolor{myred}W&\cellcolor{myred}$\cdots$\\ \hline
3&\cellcolor{myblue}S&\cellcolor{myblue}S&\cellcolor{myblue}F&\cellcolor{myblue}F&\cellcolor{myblue}F&\cellcolor{myred}W&\cellcolor{myblue}T&
\cellcolor{myblue}T&\cellcolor{myred}W&\cellcolor{myred}W&\cellcolor{myred}W&\cellcolor{myred}W&\cellcolor{myred}W&\cellcolor{myred}$\cdots$ \\ \hline
4&\cellcolor{myblue}S&\cellcolor{myblue}S&\cellcolor{myblue}F&\cellcolor{myblue}F&\cellcolor{myblue}F&\cellcolor{myred}W&\cellcolor{myred}W&\cellcolor{myred}W&\cellcolor{myred}W&\cellcolor{myred}W&\cellcolor{myred}W&\cellcolor{myred}W&
\cellcolor{myred}W&\cellcolor{myred}$\cdots$ \\ \hline
5&\cellcolor{myblue}S&\cellcolor{myblue}S&\cellcolor{myblue}F&\cellcolor{myblue}F&\cellcolor{myblue}F&\cellcolor{myred}W&\cellcolor{myred}W&\cellcolor{myred}W&\cellcolor{myred}W&\cellcolor{myred}W&\cellcolor{myred}W&\cellcolor{myred}W&
\cellcolor{myred}W&\cellcolor{myred}$\cdots$ \\ \hline
$\vdots$&\cellcolor{myblue}$\vdots$&\cellcolor{myblue}$\vdots$&\cellcolor{myblue}$\vdots$&\cellcolor{myblue}$\vdots$&\cellcolor{myblue}$\vdots$&\cellcolor{myred}$\vdots$&\cellcolor{myred}$\vdots$&\cellcolor{myred}$\vdots$&\cellcolor{myred}$\vdots$&\cellcolor{myred}$\vdots$&
\cellcolor{myred}$\vdots$&\cellcolor{myred}$\vdots$&\cellcolor{myred}$\vdots$&\cellcolor{myred}$\ddots$
\end{tabular}
\end{center}
We list all $\tau$-tilting finite $S(n,r)$ over $p=3$ as follows.
\begin{center}
\renewcommand\arraystretch{1}
\begin{tabular}{|c|c|l|c|}
\hline
$S(n,r)$ & The basic algebra of $S(n,r)$ & Morita equivalence &$\#\stautilt S(n,r)$\\
\hline
 $S(2,1)$ & $\mathbb{F}$ & $\simeq S(n,1)$ for any $n\geqslant3$ &2\\
 $S(2,2)$ & $\mathbb{F}\oplus \mathbb{F}$ & $\simeq S(n,2)$ for any $n\geqslant3$&4 \\
 $S(2,3)$ & $\mathcal{A}_2$ &    &6 \\
 $S(2,4)$ & $\mathcal{A}_2\oplus \mathbb{F}$ & $\simeq S(2,5)$  &12  \\
 $S(2,6)$ & $\mathcal{A}_3\oplus \mathbb{F}$ & $\simeq S(2,7)$ &40 \\
 $S(2,8)$ & $\mathcal{A}_3\oplus \mathbb{F}\oplus \mathbb{F}$ &  &80 \\
 $S(2,9)$ & $\mathcal{D}_4\oplus \mathbb{F}$ &  &228  \\
 $S(2,10)$ & $\mathcal{D}_4\oplus \mathbb{F}\oplus \mathbb{F}$ & &456   \\
 $S(2,11)$ & $\mathcal{D}_4\oplus \mathcal{A}_2$ &  &684  \\
 $S(3,3)$ & $\mathcal{A}_3$ & $\simeq S(n,3)$ for any $n\geqslant4$  &20 \\
 $S(3,4)$ & $\mathcal{A}_2\oplus \mathbb{F}\oplus \mathbb{F}$ & &24  \\
 $S(3,5)$ & $\mathcal{A}_2\oplus\mathcal{A}_2\oplus \mathbb{F}$ &  &72  \\
 $S(3,7)$ & $\mathcal{R}_4\oplus\mathcal{A}_2\oplus\mathcal{A}_2$ &  &3168  \\
 $S(3,8)$ & $\mathcal{R}_4\oplus\mathcal{H}_4\oplus\mathcal{A}_2$ &  &50688  \\
 $S(4,4)$ & $\mathcal{A}_3\oplus \mathbb{F}\oplus \mathbb{F}$ & $\simeq S(n,4)$ for any $n\geqslant5$ &80    \\
 $S(4,5)$ & $\mathcal{A}_3\oplus \mathcal{A}_2\oplus \mathbb{F}$ &   &240 \\
 $S(5,5)$ & $\mathcal{A}_3\oplus \mathcal{A}_3\oplus \mathbb{F}$ & $\simeq S(n,5)$ for any $n\geqslant6$ &800  \\
 \hline
\end{tabular}
\end{center}

\subsection{The $\tau$-tilting finiteness of $S(n,r)$ over $p\geqslant 5$}
\begin{center}
\renewcommand\arraystretch{1}
\begin{tabular}{|c|c|c|c|c|c|c|c|c|c|c|c|c|c|c}
\hline
\diagbox[height=2em]{$n$}{$r$}&$1\sim p-1$&$p\sim 2p-1$&$2p\sim p^2-1$&$p^2\sim p^2+p-1$&$p^2+p\sim \infty$\\ \hline
2&\cellcolor{myblue}S&\cellcolor{myblue}F&\cellcolor{myblue}F&\cellcolor{myblue}W&\cellcolor{myred}W\\ \hline
3&\cellcolor{myblue}S&\cellcolor{myblue}F&\cellcolor{myred}W&\cellcolor{myred}W&\cellcolor{myred}W\\ \hline
4&\cellcolor{myblue}S&\cellcolor{myblue}F&\cellcolor{myred}W&\cellcolor{myred}W&\cellcolor{myred}W \\ \hline
5&\cellcolor{myblue}S&\cellcolor{myblue}F&\cellcolor{myred}W&\cellcolor{myred}W&\cellcolor{myred}W\\ \hline
$\vdots$&\cellcolor{myblue}$\vdots$&\cellcolor{myblue}$\vdots$&\cellcolor{myred}$\vdots$&\cellcolor{myred}$\vdots$&\cellcolor{myred}$\vdots$
\end{tabular}
\end{center}

Depending on the prime number $p$, we can determine the structure for the basic algebra $\overline{S(n,r)}$ of $\tau$-tilting finite $S(n,r)$ as follows.
\begin{itemize}\setlength{\itemsep}{-3pt}
\item In the case of $n\geqslant 2$ and $1\leqslant r\leqslant p-1$, $\overline{S(n,r)}$ is isomorphic to a finite direct sum of copies of $\mathbb{F}$.
\item In the case of $n\geqslant 2$ and $p\leqslant r\leqslant 2p-1$, $\overline{S(n,r)}$ is isomorphic to a finite direct sum of copies of $\mathbb{F}$ and copies of $\mathcal{A}_m$ with $2\leqslant m\in \mathbb{N}$.
\item In the case of $n=2$ and $2p\leqslant r\leqslant p^2+p-1$, $\overline{S(2,r)}$ is isomorphic to a finite direct sum of copies of $\mathbb{F}$, copies of $\mathcal{A}_m (2\leqslant m \leqslant p)$ and copies of $\mathcal{D}_{p+1}$.
\end{itemize}
In fact, the multiplicities of $\mathbb{F}$, $\mathcal{A}_m$ and $\mathcal{D}_{p+1}$ in each case are determined by the decomposition matrix of the symmetric group $G_r$ over $p$, which is still open.

\newpage
\noindent\textit{Acknowledgements.}
TA is partially supported by JSPS Grants-in-Aid for Scientific Research JP19J11408 and JSPS Grant-in-Aid for Transformative Research Areas (A) 22H05105. QW is partially supported by JSPS Grant-in-Aid for JSPS Fellows 20J10492 and Fundamental Research Funds for the Central Universities DUT25RC(3)132. 
The authors are grateful to Professor Susumu Ariki for his helpful discussions and comments, and thanks to Liron Speyer for suggesting that we consider the strictly wildness of Schur algebras.

\ \\

TA: Graduate School of Human Development and Environment, Kobe University, 3-11 Tsurukabuto,
Nada-ku, Kobe 657-8501, Japan

\emph{Email address}: \texttt{toshitaka.aoki@people.kobe-u.ac.jp}

\vspace{0.3cm}
QW: School of Mathematical Sciences, Dalian University of Technology, Dalian City, Liaoning Province, 116024, China 

\emph{Email address}: \texttt{wang2025@dlut.edu.cn (cc:infinite-wang@outlook.com)}

\end{document}